\documentclass[12pt]{article}

\usepackage{amsmath,amssymb,amsthm}
\theoremstyle{definition} 

\newtheorem{theorem}{Theorem}[section]
\newtheorem{proposition}[theorem]{Proposition}
\newtheorem{lemma}[theorem]{Lemma}
\newtheorem{corollary}[theorem]{Corollary}
\newtheorem{conjecture}[theorem]{Conjecture}

\theoremstyle{definition}

\newtheoremstyle{myexample}
  {\medskipamount} 
  {0pt}            
  {\normalfont}    
  {0pt}            
  {\normalfont}    
  {.}              
  {0.5em}          
  {}                

\theoremstyle{myexample}
\newtheorem{example}{Example}

\newtheoremstyle{myremark}
  {\medskipamount} 
  {0pt}            
  {\normalfont}    
  {0pt}            
  {\itshape}       
  {.}              
  {0.5em}          
  {}                

\theoremstyle{myremark}
\newtheorem*{remark}{Remark}

\theoremstyle{myremark}
\newtheorem*{remarks}{Remarks}

\newcommand{\Pp}{\mathbb P}

\newcommand{\dd}{\,\mathrm d}

\title{Polynomial Decay in Absorbing Continuous-Time \\Markov Chains}
\author{Phil Pollett}
\date{}

\begin{document}
\maketitle

\begin{center}
\begin{minipage}{14cm}
{\bf Abstract} \ \
\setlength{\parindent}{1.5em}
Let $P(t)$ be the transition function of an absorbing continuous-time
Markov chain on a countable state space. We study asymptotic relations
of the form $p_{ij}(t)\sim a_{ij} L(t)$, $t\to\infty$, where $L$ is
independent of $i$ and $j$. We obtain general conditions under which the
coefficient matrix $A=(a_{ij})$ has rank one and describe the resulting
consequences for survival probabilities and conditional distributions.
The analysis applies to both reducible and irreducible chains. We also
construct an irreducible counterexample, based on a killed random walk
on a homogeneous tree, for which a common asymptotic scale exists but
the coefficient matrix has rank greater than one. This shows that
irreducibility alone does not imply rank-one asymptotics. The results
identify conditions under which fixed-state transition asymptotics
determine the asymptotic behaviour of the entire transition function and
clarify the limitations of such conclusions in the absence of additional
structure.
\end{minipage}
\end{center}

\smallskip
\noindent {\bf MSC 2020:} Primary 60J35; Secondary 60J27, 60K05, 47A10.

\medskip

\noindent {\bf Keywords:} absorbing continuous-time Markov chains;
polynomial asymptotics; regular variation; defective renewal theory;
spectral edge; birth--death processes; quasi-stationarity

\section{Introduction}\label{sec:introduction}

For an irreducible continuous-time Markov chain with an absorbing state,
the transition probabilities between transient states may satisfy
$p_{ij}(t)\sim a_{ij}L(t)$ as $t\to\infty$. Does the existence of a
common scale function $L$ force the coefficient matrix $A=(a_{ij})$ to
have rank one? More broadly, to what extent does a common asymptotic
scale determine the global asymptotic structure of the transition
function? These questions motivate the present paper.

Let $(X(t),\,t\geq 0)$ be a continuous-time Markov chain with an honest
transition function $P(t)=(p_{ij}(t),\,i,j\in S)$ on the countable state
space $S=\{0\}\cup C$, where $0$ is absorbing. Let $Q=(q_{ij},\,i,j\in
S)$ denote its $q$-matrix. We do not require $Q$ to be regular; thus,
when the rates do not determine the transition function uniquely, $P$ is
understood to be the particular transition function of the process under
consideration. We assume that $C$ is irreducible and that absorption
occurs with probability~$1$ from every state in~$C$. In particular, with
$T_0=\inf\{t\geq 0:X(t)=0\}$, we have
$\Pp_i(T_0>t)=1-p_{i0}(t)=\sum_{j\in C}p_{ij}(t)\to 0$ for every $i\in C$.

Of principal interest in the study of quasi-stationarity is
the limiting behaviour of the ratio
\begin{equation}
 \frac{p_{ij}(t)}{1-p_{i0}(t)} = \Pp_i(X(t)=j\mid T_0>t),
 \qquad i,j\in C,
 \label{PKP2}
\end{equation}
called the \emph{limiting conditional distribution} (LCD). 
Whether this limit exists and defines a proper distribution on~$C$
depends crucially on the relative rates at which the numerator and
denominator in~\eqref{PKP2} converge to $0$. These rates are our
principal concern here.

The first general results on LCDs~\cite{SV66} were facilitated by a finer
classification of transient states~\cite{Kingman1963} and by the
existence of a common rate at which the transition probabilities decay.
There is a $\lambda\geq 0$, called the \emph{decay parameter} of~$C$,
such that
\[
  \frac{1}{t}\log p_{ij}(t)\to -\lambda,
  \qquad t\to\infty,
\]
for all $i,j\in C$. The class $C$ is called
\emph{$\lambda$-recurrent} or \emph{$\lambda$-transient} according as
\[
  \int_0^\infty e^{\lambda t}p_{ij}(t)\,\mathrm dt
\]
diverges or converges, for some, and hence every, $i,j\in C$. When $C$
is $\lambda$-recurrent, it is called \emph{$\lambda$-positive} or
\emph{$\lambda$-null} according as
\[
  \lim_{t\to\infty}e^{\lambda t}p_{ij}(t)
\]
is positive or $0$, again for some, and hence every, $i,j\in C$.
In the $\lambda$-positive case, the limiting values are determined by
families $(m_i,\,i\in C)$ and $(x_i,\,i\in C)$ satisfying
\begin{equation}
  \sum_{i\in C} m_i p_{ij}(t) = e^{-\lambda t} m_j
  \quad \text{and} \quad
  \sum_{j\in C} p_{ij}(t) x_j = e^{-\lambda t} x_i.
\label{PKP3}
\end{equation}
When $C$ is $\lambda$-recurrent, the positive solutions to~\eqref{PKP3}, 
called the $\lambda$-invariant measure and vector,
respectively, are unique up to constant multiples.
$C$~is then $\lambda$-positive if and only if 
$A^{-1}:=\sum_{k\in C} m_k x_k <\infty$, whence we obtain the 
limit 
\[
\lim_{t\to\infty} e^{\lambda t} p_{ij}(t) = A x_i m_j.
\]
This rank-one limit suggests the following formal calculation:
\begin{equation}
 \Pp_i(X(t)=j\mid T_0>t)
 =
 \frac{e^{\lambda t}p_{ij}(t)}
      {e^{\lambda t}\sum_{k\in C}p_{ik}(t)}
 \to
 \frac{m_j}{\sum_{k\in C}m_k}.
\label{eq:formal}
\end{equation}
The calculation is formal because pointwise convergence of the
fixed-state transition probabilities does not by itself justify
interchanging limit and summation in the denominator in~\eqref{eq:formal}.
In the $\lambda$-positive case the required conclusion is nevertheless
valid~\cite{VereJones1969}, with the limit interpreted as~$0$ when
$\sum_{k\in C}m_k=\infty$.

Of course $p_{ij}(t)$ is seldom available explicitly, 
but there are ``$q$-matrix versions'' of~\eqref{PKP3},
\begin{equation}
 \sum_{i\in C} m_i q_{ij}= -\lambda  m_j
  \quad \text{and} \quad
 \sum_{j\in C} q_{ij} x_j = -\lambda x_i,
\label{PKP4}
\end{equation}
and positive solutions to~\eqref{PKP4} satisfy~\eqref{PKP3} under
conditions that are easy to check~\cite{Pollett1986}.

One might think that this completes the picture. However,
$\lambda$-positivity is not necessary for the existence of an LCD. Even
when $e^{\lambda t}p_{ij}(t)\to 0$, the transition probabilities may
satisfy $t^{\kappa}e^{\lambda t}p_{ij}(t) \to Ax_im_j$, where
$\kappa>1$, $A>0$, and where $m$ and $x$ are positive
$\lambda$-invariant measures, and positive $\lambda$-invariant vectors,
respectively. The M/M/1 queue killed at the end of its first busy period
has this property (see below). Furthermore, there might be a
$\kappa_0\leq \kappa$ such that $t^{\kappa_0} e^{\lambda t}
(1-p_{i0}(t)) \to B x_i$, where $B>0$. The killed M/M/1 queue has this
property with $\kappa_0=\kappa$. So, in principle the formal calculation
in~\eqref{eq:formal} may still yield an LCD. In contrast, the critical
Markov branching process~\cite{AsmussenHering1983} provides a further
example, but with $\kappa_0<\kappa$, and the formal calculation would
lead to a zero limit.

Although motivated by such calculations, we concentrate on the
asymptotic behaviour of fixed-state probabilities $p_{ij}(t)$ and
survival probabilities $\Pp_i(T_0>t)$ separately, because limiting
conditional behaviour is governed by the interaction between their
asymptotics. The conjectures proposed in~\cite{Pollett2022} provide the
starting point for the present investigation. We state them below in a
regular-variation form before describing the qualifications and
counterexamples developed here.

\begin{conjecture}[Common regular variation]
\label{conj:common}
If the residual of some fixed-state transition probability is regularly
varying after removal of the common exponential term, then there exist
an exponent $\kappa>1$ and a slowly varying function $L$, common to all
fixed pairs of states, such that
\[
  e^{\lambda t}p_{ij}(t)
  \sim c_{ij}t^{-\kappa}L(t),
  \qquad i,j\in C,
\]
for positive coefficients $c_{ij}$. This strengthens the
regular-variation formulation in~\cite{Pollett2022} by requiring a
common slowly varying factor. The original positive-limit formulation
is recovered when $L(t)\to L_\infty\in(0,\infty)$.
\end{conjecture}

\noindent
The restriction $\kappa>1$ reflects the original
$\lambda$-transient setting. The spectral-edge result below establishes
the corresponding fixed-state asymptotic for every $\kappa>0$, with
$\kappa>1$ needed only for $\lambda$-transience to be automatic.

\begin{conjecture}[Rank-one coefficients]
\label{conj:rankone}
The coefficients factor as
\[
  c_{ij}=Ax_im_j,
\]
where $m$ and $x$ are a positive $\lambda$-invariant measure and
vector, respectively, or, more generally, positive
$\lambda$-subinvariant quantities, with equality in~\eqref{PKP3} replaced
by the corresponding inequalities ($\leq$).
\end{conjecture}

\noindent
Several results below support this rank-one picture: it holds for the
killed stable $M/M/1$ queue, the hub-and-traps model, and birth--death
processes satisfying the spectral-edge hypothesis of
Theorem~\ref{thm:bd-spectral-edge}. It does not, however, follow from
irreducibility alone. In the homogeneous-tree example the fixed-state
probabilities have a common polynomial exponent, but their limiting
coefficient kernel is not rank one. The multichannel analysis suggests
that the relevant structural condition is effective spectral
multiplicity~$1$ at the lower edge.

\begin{conjecture}[Survival-scale]
\label{conj:survival}
Given the existence of $\kappa$ as in Conjecture~\ref{conj:common},
there exist
$\kappa_0\leq\kappa$ and a slowly varying function $L_0$ such that
\[
  e^{\lambda t}\Pp_i(T_0>t)
  \sim Bx_it^{-\kappa_0}L_0(t).
\]
\end{conjecture}
\noindent
This formulation records the original conjectural picture set out
in~\cite{Pollett2022}. The examples below show that two qualifications
are needed. In the hub-and-traps model, survival and fixed-state
transition probabilities have the same exponential decay parameter, but
$\kappa_0=\kappa-1<\kappa$. In the homogeneous-tree example, survival
has a strictly smaller exponential decay parameter, so no polynomial
renormalisation at the fixed-state scale $\lambda$ is possible.

\begin{remarks}
(i) The distinction between the decay parameter governing fixed-state
transition probabilities and that governing survival probabilities has
appeared previously in the study of conditioned processes. Jacka and
Roberts~\cite{JackaRoberts1995} introduced separate decay parameters for
these quantities and observed that they need not coincide. The analyses
below suggest that this distinction extends beyond the exponential scale
and persists in the polynomial corrections to large-$t$ asymptotics.

\medskip
\noindent
(ii) Such polynomial corrections also
arise in recent work of Champagnat and Villemonais
\cite{ChampagnatVillemonais2026} on absorbed Markov chains with
reducible state spaces. Their polynomial convergence parameter reflects
the successive passage through communication classes contributing at the
same exponential scale. The present paper concerns a different
mechanism: decaying polynomial residuals within a single irreducible
transient class, generated by renewal tails or spectral-edge behaviour.

\medskip
\noindent
(iii) The inclusion of slowly varying factors is important. Regular
variation naturally produces asymptotic forms of the type
$t^{-\kappa}L(t)$; it does not by itself imply convergence after
multiplication by a pure power of $t$.

\medskip
\noindent
(iv) Polynomial rates also arise in subgeometric ergodicity, including
applications to Markov chain Monte Carlo, although there the object of
study is convergence to equilibrium rather than decay in an evanescent
class; see, for example, \cite{DoucMoulinesSoulier2006,MeynTweedie1993}.
\end{remarks}

\medskip
It is convenient to remove the common exponential decay rate at the
outset. Define $g_{ij}(t)=e^{\lambda t}p_{ij}(t)$ and $G(t)=e^{\lambda
t}P_C(t)$, where $P_C$ is $P$ restricted to $C$. Then, although $G$ is
not a transition function, because $\sum_j g_{ij}(t) = e^{\lambda
t}\Pp_i(T_0>t)$ may exceed $1$, it satisfies the semigroup property
$G(t+s)=G(t)G(s)$. Consideration of such semigroups dates back at least
to Jurkat~\cite{Jurkat1960}. Kingman also considers them in Section~7
of~\cite{Kingman1963}, and his Theorem~9 applies because $t^{-1} \log
g_{ii}(t)$ is bounded; we conclude that $t^{-1} \log g_{ij}(t) \to 0$.
Thus one may take $\lambda=0$ without loss of generality for a range of
arguments that do not depend on our semigroup being Markovian. 
Note that the formal generator of $G$ is $Q_C+\lambda I$; its
row sums are $\lambda-q_{i0}$ and might be positive.

Taken together, the results below separate three questions: whether
fixed-state probabilities have a common residual scale, whether their
leading coefficients have rank one, and whether that scale also governs
survival. All examples considered here have a common fixed-state
polynomial exponent. Rank one, however, depends on additional spectral
structure, while survival may differ from fixed-state behaviour at
either the polynomial or the exponential scale. These distinctions also
govern limiting conditional behaviour. A proper LCD exists for the
killed stable $M/M/1$ queue, but not for the hub-and-traps or
homogeneous-tree examples, where conditioned mass escapes to infinity.
More generally, passage from fixed-state asymptotics to an LCD requires
tightness of the corresponding normalized transition measures. The paper
studies these questions by renewal methods, scalar and multichannel
spectral representations, and an explicit homogeneous-tree
counterexample.

The paper is organised as follows. Section~\ref{sec:renewal}
introduces the renewal framework and analyses the killed stable
$M/M/1$ queue. Section~\ref{sec:hub-traps} develops the explicit
hub-and-traps model. Section~\ref{sec:birth-death-spectral-edge}
establishes the spectral-edge theorem for birth--death processes.
Section~\ref{sec:multichannel} discusses multichannel spectral
representations. Section~\ref{sec:tree-counterexample} presents the
homogeneous-tree counterexample. The final section returns to the
original conjectures and discusses possible directions for future
work.

\section{A renewal approach}\label{sec:renewal}

The conjectures of the introduction suggest that polynomial decay may be
generated by a single exceptionally long excursion. To examine this
mechanism, fix a reference state $a\in C$ and decompose sample paths
according to successive returns to $a$. After removal of the common
exponential decay, the tail of the return-time distribution may then
determine the asymptotic behaviour of the diagonal entry $g_{aa}$. We
develop a defective-renewal equation for $g_{aa}$ and give a
subexponential sufficient condition for polynomial decay. Entrance and
exit decompositions indicate how this diagonal asymptotic can propagate
to general fixed states and lead to rank-one coefficients. The killed
stable $M/M/1$ queue illustrates the mechanism, while the hub-and-traps
model of the next section provides a setting in which it can be analysed
completely.

Fix $a\in C$.  Let
$\sigma_a=\inf\{t>0:X(t)\neq a\}$,
$\tau_a^+=\inf\{t\geq\sigma_a:X(t)=a\}$,
with $\tau_a^+=\infty$ if absorption occurs first.  Let $q_a=-q_{aa}$ and define the defective return measure
$F_a(\dd s)=\Pp_a(\tau_a^+\in\dd s,\ \tau_a^+<T_0)$.
A first-return decomposition gives the exact renewal equation
\begin{equation}\label{eq:renewal-original}
 p_{aa}(t)=e^{-q_at}+\int_{(0,t]}p_{aa}(t-s)F_a(\dd s).
\end{equation}
Define the exponentially tilted return measure by
$K_a(\dd s)=F_a^{(\lambda)}(\dd s)=e^{\lambda s}F_a(\dd s)$.
Then
\begin{equation}\label{eq:renewal-tilted}
 g_{aa}(t)=e^{-(q_a-\lambda)t}
 +\int_{(0,t]}g_{aa}(t-s)K_a(\dd s).
\end{equation}
In the $\lambda$-transient case,
$\int_0^\infty g_{aa}(t)\,\mathrm dt<\infty$.
Since \eqref{eq:renewal-tilted} gives
$g_{aa}(t)\geq e^{-(q_a-\lambda)t}$,
we must have $q_a>\lambda$. We may therefore integrate
\eqref{eq:renewal-tilted}. Tonelli's theorem gives
\[
\int_0^\infty g_{aa}(t)\,\mathrm dt
=
\frac{1}{q_a-\lambda}
+
\rho_a\int_0^\infty g_{aa}(t)\,\mathrm dt,
\]
where $\rho_a=K_a((0,\infty))$.
Since the integral on the left is finite, it follows that
$\rho_a<1$. Irreducibility gives $\rho_a>0$, and hence
$0<\rho_a<1$.
Thus \eqref{eq:renewal-tilted} is a defective renewal equation, with
\[
\int_0^\infty g_{aa}(t)\,\mathrm dt
=
\frac{1}{(q_a-\lambda)(1-\rho_a)}.
\]
Its Laplace transform is
\begin{equation}\label{eq:laplace-renewal}
 \widehat g_{aa}(z)
 =\frac{1}{(z+q_a-\lambda)(1-\widehat K_a(z))}.
\end{equation}
Thus the tail of $g_{aa}$ is controlled by the singular behaviour of
$\widehat K_a$ at $0$.

\medskip
\noindent
{\bf Defective renewal with a subexponential density.}\quad
Suppose that the tilted return measure $K_a$ has a density $k_a$, and
write
$ \rho_a=\int_0^\infty k_a(s)\,\mathrm ds\in(0,1)$.
The renewal-density asymptotic needed below is a direct consequence of
standard weighted-convolution and random-sum results for
subexponential densities; see
\cite[Corollary~4, Proposition~8 and Theorem~3]
{AsmussenFossKorshunov2003} and
\cite[Corollary~4.10 and Theorems~4.11 and~4.30]
{FossKorshunovZachary2013}.
In particular, the factor $(1-\rho_a)^{-2}$ is obtained by using the
geometric coefficients $\rho_a^n$, normalized when necessary.
We state the resulting estimate in the form needed here, and include
the short argument to make the normalization and the additional
convolution with the driving term explicit.
We write $\mathcal S_d$ for the class of probability densities $f$ on
$\mathbb R_+$ that are long-tailed and satisfy
$(f\ast f)(t)\sim 2f(t)$, $t\to\infty$.

\begin{proposition}[A defective-renewal estimate]
\label{prop:defective-renewal-density}
Let $k$ be a nonnegative, locally bounded density of a defective measure on
$(0,\infty)$, with
$\rho=\int_0^\infty k(s)\,\mathrm ds\in(0,1)$.
Write $f=k/\rho$, and suppose that $f\in\mathcal S_d$.
Assume further that $f$ satisfies a local Kesten-type bound: for
every $\varepsilon\in(0,1)$, there exist constants
$C_\varepsilon<\infty$ and $t_\varepsilon<\infty$ such that
\begin{equation}
\label{eq:kesten-abstract-condition}
 f^{\ast n}(t)
 \leq C_\varepsilon(1+\varepsilon)^n f(t),
 \qquad t\geq t_\varepsilon,\quad n\geq1.
\end{equation}
Let $h$ be a nonnegative integrable function satisfying
$0<\int_0^\infty h(s)\,\mathrm ds<\infty$ and
$h(t)=o(k(t))$, $t\to\infty$, and put
$u(t):=\sum_{n\geq1}k^{\ast n}(t)$.
Suppose that convolution with $h$ preserves the tail of $u$, in the
sense that
\[
 (h\ast u)(t)
 \sim
 \left(\int_0^\infty h(s)\,\mathrm ds\right)u(t).
\]
Define
$z(t):=\sum_{n\geq 0}(h\ast k^{\ast n})(t)$, $k^{\ast0}=\delta_0$.
Then $z$ is the minimal nonnegative solution of
\[
z(t) = h(t)+\int_0^t z(t-s)k(s)\,\mathrm ds,
\]
and satisfies
\[
 z(t) \sim \frac{1}{(1-\rho)^2} \int_0^\infty h(s)\,\mathrm ds \, k(t), 
\quad t \to \infty.
\]
\end{proposition}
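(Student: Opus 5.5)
The argument has two parts. First, show that $z$ is well defined and is the minimal solution. Second, reduce the asymptotic to the renewal density $u$, whose tail comes from a dominated-convergence argument on the series $\sum_n \rho^n f^{\ast n}$.

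\emph{Step 1 (minimal solution).} All terms $h\ast k^{\ast n}$ are nonnegative, and $\int h\ast k^{\ast n}=\rho^n\int h$. Hence $z=\sum_{n\ge0}h\ast k^{\ast n}$ is finite almost everywhere, with $\int z=(1-\rho)^{-1}\int h$. By Tonelli,
\[
z=h+z\ast k .
\]
For minimality, take any nonnegative solution $y$. Iterating the equation gives
\[
y=\sum_{n=0}^{N}h\ast k^{\ast n}+y\ast k^{\ast(N+1)}\ \ge\ \sum_{n=0}^{N}h\ast k^{\ast n},
\]
and letting $N\to\infty$ yields $y\ge z$.

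\emph{Step 2 (tail of $u$).} Write $u=\sum_{n\ge1}\rho^n f^{\ast n}$. The subexponential density property $f\in\mathcal S_d$ gives $f^{\ast n}(t)/f(t)\to n$ for each fixed $n$, by the standard induction using long-tailedness. Choose $\varepsilon$ with $\rho(1+\varepsilon)<1$. The Kesten bound \eqref{eq:kesten-abstract-condition} then dominates $\rho^n f^{\ast n}(t)/f(t)$ by $C_\varepsilon(\rho(1+\varepsilon))^n$ for $t\ge t_\varepsilon$, and this is summable in $n$. Dominated convergence gives
\[
\frac{u(t)}{f(t)}\to\sum_{n\ge1}n\rho^n=\frac{\rho}{(1-\rho)^2},
\qquad\text{that is,}\qquad
u(t)\sim\frac{k(t)}{(1-\rho)^2}.
\]

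\emph{Step 3 (assembling $z$).} Split off the $n=0$ term:
\[
z=h+\sum_{n\ge1}h\ast k^{\ast n}=h+h\ast u ,
\]
where the interchange of sum and convolution is justified by Tonelli. The hypothesis on $h$ gives $(h\ast u)(t)\sim(\int h)\,u(t)\sim(\int h)(1-\rho)^{-2}k(t)$. Since $h(t)=o(k(t))$, the term $h$ is negligible, and the stated asymptotic for $z$ follows.

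The only substantive point is Step 2. There the fixed-$n$ limits $f^{\ast n}\sim nf$ must be combined with a uniform-in-$n$ domination, and the Kesten bound is exactly what supplies that domination; the choice $\rho(1+\varepsilon)<1$ is essential. The fixed-$n$ asymptotic comes from the cited subexponential density results \cite{FossKorshunovZachary2013}: induct using $f^{\ast(n+1)}=f^{\ast n}\ast f$ together with the convolution closure of $\mathcal S_d$. Everything else is Tonelli bookkeeping.
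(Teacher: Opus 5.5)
Your proposal is correct and follows essentially the same route as the paper. Minimality comes from iterating the renewal equation, the tail $u(t)\sim k(t)/(1-\rho)^2$ comes from dominated convergence on $\sum_{n\geq1}\rho^n f^{\ast n}(t)/f(t)$ using the Kesten bound with $\rho(1+\varepsilon)<1$, and the conclusion follows from $z=h+h\ast u$ together with the tail-preservation hypothesis, $h=o(k)$ and $\int_0^\infty h(s)\,\mathrm ds>0$.
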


\begin{proof}
Iteration of the renewal equation gives
$z=h+h\ast k+h\ast k^{\ast2}+\cdots$.
More precisely, the partial sums
$z_N=\sum_{n=0}^N h\ast k^{\ast n}$, $N\geq 0$,
increase pointwise to $z$ and, for $N\geq1$, satisfy
$z_N=h+z_{N-1}\ast k$.
Monotone convergence therefore shows that $z=h+z\ast k$. If
$\widetilde z$ is any other nonnegative solution, repeated substitution
gives
$\widetilde z\geq\sum_{n=0}^N h\ast k^{\ast n}=z_N$
for every $N$. Letting $N\to\infty$ yields $\widetilde z\geq z$.
Thus $z$ is the minimal nonnegative solution.
The asymptotic for the defective renewal density is the
geometric-weight specialization of
\cite[Theorem~3]{AsmussenFossKorshunov2003}; equivalently, it follows
from \cite[Theorem~4.30]{FossKorshunovZachary2013} after normalization
of the geometric weights. The remaining step uses the stated
tail-preservation assumption for convolution with the driving term.
We give the details in order to make the normalization and the
constant explicit.

Since $k=\rho f$, we have $k^{\ast n}(t)=\rho^n f^{\ast n}(t)$.
For every fixed $n\geq1$, subexponentiality of the density gives
$f^{\ast n}(t)\sim nf(t)$, $t\to\infty$;
see \cite[Corollary~4]{AsmussenFossKorshunov2003} or
\cite[Corollary~4.10]{FossKorshunovZachary2013}. Consequently,
\[
 \frac{\rho^n f^{\ast n}(t)}{f(t)}\to n\rho^n.
\]
By the assumed Kesten-type bound \eqref{eq:kesten-abstract-condition}, 
\[
 \frac{\rho^n f^{\ast n}(t)}{f(t)} \leq C_\varepsilon\{\rho(1+\varepsilon)\}^n
\]
for every $n\geq1$ and $t\geq t_\varepsilon$.
Since $\rho<1$, choose $\varepsilon\in(0,1)$ sufficiently small that
$\rho(1+\varepsilon)<1$.
The expression on the right is summable over $n$ and is independent
of $t$. Dominated convergence therefore gives
\[
 \frac{u(t)}{f(t)}
 =
 \sum_{n\geq1}
 \frac{\rho^n f^{\ast n}(t)}{f(t)}
 \to
 \sum_{n\geq1}n\rho^n
 =
 \frac{\rho}{(1-\rho)^2}.
\]
Since $k=\rho f$, it follows that
$u(t)\sim{k(t)}/{(1-\rho)^2}$.
Since $z=h+h\ast u$, the assumed tail-preservation property gives
$(h\ast u)(t) \sim (\int_0^\infty h(s)\,\mathrm ds) u(t)$.
Since $h(t)=o(k(t))$ and
$u(t)\sim{k(t)}/{(1-\rho)^2}$, we have $h(t)=o(u(t))$. Since
$\int_0^\infty h(s)\,\mathrm ds>0$,
the tail-preservation assumption also gives
$h(t)=o((h\ast u)(t))$.
Therefore,
\[
 z(t)
 \sim
 (h\ast u)(t)
 \sim
 \frac{k(t)}{(1-\rho)^2}
 \int_0^\infty h(s)\,\mathrm ds,
\]
as required.
\end{proof}

We now apply this result to the tilted renewal equation
\eqref{eq:renewal-tilted} with $h_a(t)=e^{-(q_a-\lambda)t}$. Suppose
that $q_a>\lambda$, that the tilted return measure has a density $k_a$
with mass $\rho_a=\int_0^\infty k_a(s)\,\mathrm ds\in(0,1)$, and that
the normalized density $f_a={k_a}/{\rho_a}$ satisfies the
subexponential-density and uniform Kesten-bound hypotheses of
Proposition~\ref{prop:defective-renewal-density}. If $k_a$ is regularly
varying, then $h_a(t)=o(k_a(t))$, and convolution with the exponentially
decreasing function $h_a$ preserves the tail of the defective renewal
density. Since
\[
 \int_0^\infty h_a(s)\,\mathrm ds = \frac{1}{q_a-\lambda},
\]
Proposition~\ref{prop:defective-renewal-density} gives
\begin{equation}
\label{eq:diag-asym}
 g_{aa}(t)
 \sim
 \frac{k_a(t)}
 {(q_a-\lambda)(1-\rho_a)^2}.
\end{equation}

The probabilistic interpretation is a tilted one-large-excursion
principle. Under the original law, exceptionally long returning
excursions are exponentially rare. After exponential tilting at the
decay parameter, the return density is heavy-tailed. Among the
geometrically weighted return cycles in the defective-renewal expansion,
a total duration of order $t$ is therefore produced asymptotically by
one excursion whose duration is of order $t$. To pass from the diagonal
entry $g_{aa}$ to general fixed states, we use the following entrance
and exit decompositions.

\medskip
\noindent
{\bf Entrance and exit decompositions.}\quad
The diagonal renewal equation accounts for repeated return cycles at
the reference state $a$. For general initial and terminal states, these
cycles must be supplemented by an entrance segment leading to $a$ and
a final segment after the last visit to $a$.
For $i\neq a$, let
\[
 H_{ia}(\mathrm ds)
 =
 \Pp_i(T_a\in\mathrm ds,\,T_a<T_0),
 \qquad
 H_{ia}^{(\lambda)}(\mathrm ds)
 =
 e^{\lambda s}H_{ia}(\mathrm ds),
\]
where $T_a=\inf\{t\geq 0:X(t)=a\}$. Set $H_{aa}^{(\lambda)}=\delta_0$.
The strong Markov property gives, for $i\neq a$,
\begin{equation}
\label{eq:entrance}
 g_{ia}(t)
 =
 \int_{(0,t]}g_{aa}(t-s)H_{ia}^{(\lambda)}(\mathrm ds).
\end{equation}
Define the tilted taboo exit kernel by
\[
 k_{aj}^{(\lambda)}(u)
 =
 e^{\lambda u}
 \Pp_a\bigl(X(u)=j,\,\tau_a^+>u\bigr),
 \qquad u\geq 0.
\]
Thus $k_{aj}^{(\lambda)}(u)$ describes the final segment from $a$ to
$j$ during which no further return to $a$ occurs. Let
\[
 U_a^{(\lambda)}
 =
 \sum_{n\geq 0}K_a^{\ast n},
 \qquad
 K_a^{\ast0}=\delta_0,
\]
be the defective renewal measure generated by the tilted return
measure.

For $i\neq a$ and $j\in C$, define the tilted taboo contribution
\[
 g_{ij}^{[a]}(t)
 =
 e^{\lambda t}
 \Pp_i\bigl(X(t)=j,\,T_a>t\bigr),
\]
and set $g_{ij}^{[a]}(t)=0$ when $i=a$.
Then the strong Markov property
at the first entrance to $a$, 
followed by the renewal decomposition
into complete return cycles and a final taboo segment, gives the exact
identity
\begin{equation}
\label{eq:full-renewal}
 g_{ij}(t)
 =
 g_{ij}^{[a]}(t)
 +
 \bigl(
 H_{ia}^{(\lambda)}
 \ast U_a^{(\lambda)}
 \ast k_{aj}^{(\lambda)}
 \bigr)(t),
 \qquad i,j\in C.
\end{equation}
Here convolution between measures and functions is understood in the
usual sense. In particular,
\[
 \bigl(
 H_{ia}^{(\lambda)}
 \ast U_a^{(\lambda)}
 \ast k_{aj}^{(\lambda)}
 \bigr)(t)
 =
 \int_{[0,t]}
 \int_{[0,t-s]}
 k_{aj}^{(\lambda)}(t-s-u)\,
 U_a^{(\lambda)}(\mathrm du)\,
 H_{ia}^{(\lambda)}(\mathrm ds).
\]
With these conventions, \eqref{eq:full-renewal} also includes the
diagonal case. Indeed,
\[
 k_{aa}^{(\lambda)}(u)
 =
 e^{-(q_a-\lambda)u},
\]
and, on taking $i=j=a$, the identity reduces to the tilted renewal
representation for $g_{aa}$. Formula~\eqref{eq:full-renewal} makes the
source of a possible rank-one factorisation transparent. If the taboo
term is negligible and convolution with the entrance and exit kernels
preserves the dominant renewal tail, then the limiting coefficient
separates into an entrance factor depending on~$i$ and an exit factor
depending on~$j$. Sufficient conditions are provided by local
subexponentiality of the return density, suitable light-tail or
convolution-equivalence properties of the entrance and exit kernels, and
negligibility of the taboo term.

For the killed $M/M/1$ queue, the calculation below shows that the
return kernel lies at a critical exponential boundary. This cautions
against imposing hypotheses that require exponential integrability
beyond the decay parameter. Extending the renewal argument from the
diagonal entry to all fixed pairs requires separate control of the
entrance, exit and taboo contributions in~\eqref{eq:full-renewal}.

\medskip\noindent{\bf The killed stable $M/M/1$ queue.}\quad
Let arrivals occur at rate $p$ and departures at rate $q>p$, with killing 
on first reaching $0$. Put
\[
\nu=p+q=q_1,\qquad b=\sqrt{p/q},\qquad \theta=2\sqrt{pq},
\qquad \lambda=\nu-\theta.
\]
The transition probabilities are expressible using modified Bessel
functions, and Seneta's asymptotics~\cite{Seneta1966} give
\begin{equation}\label{eq:mm1-general}
 p_{ij}(t)\sim
 \frac{2ijb^{j-i}}{\theta\sqrt{2\pi\theta}}
 e^{-\lambda t}t^{-3/2}.
\end{equation}
Hence $\kappa=3/2$ and the state-dependent coefficient factors as
$ijb^{j-i} = \bigl(ib^{1-i}\bigr)\bigl(jb^{j-1}\bigr)$.
This normalization agrees with the spectral-edge eigen-quantities
identified below in Example~\ref{ex:mm1-edge-polynomials}.
The queue is $\lambda$-transient. Nevertheless, Seneta's 
survival asymptotic yields a proper LCD~\cite{Seneta1966}.

Taking $a=1$ as the renewal state,
\[
 g_{11}(t)=e^{\lambda t}p_{11}(t) =\frac{2e^{-\theta t}}{\theta t}I_1(\theta t) \sim \frac{2}{\theta\sqrt{2\pi\theta}}t^{-3/2}.
\]
Its Laplace transform is
\[
 G(z)=\int_0^\infty e^{-zt}g_{11}(t)\dd t =\frac{2}{z+\theta+\sqrt{z(z+2\theta)}}.
\]
The tilted first-return transform is therefore
\[
 \widehat K(z) =1-\frac{1}{(z+\theta)G(z)} =\frac12\left(1- \frac{\sqrt{z(z+2\theta)}}{z+\theta}\right).
\]
In particular, $\rho=\widehat K(0)=1/2$ and
\[
 \widehat K(z) = \frac12- \frac{1}{\sqrt{2\theta}}z^{1/2} +O(z).
\]
A Tauberian calculation yields the tilted return density
\[
 k(t)\sim\frac{1}{2\sqrt{2\pi\theta}}t^{-3/2}.
\]
Here
$q_1-\lambda=\theta$ and $(1-\rho)^2=1/4$.
Hence \eqref{eq:diag-asym} gives
\[
g_{11}(t) \sim \frac{k(t)}{\theta/4} \sim
\frac{2}{\theta\sqrt{2\pi\theta}}t^{-3/2},
\]
in agreement with the exact diagonal asymptotic above.
This gives a probabilistic interpretation of the exponent. Under the
original law, long returning excursions from state $1$ are exponentially
rare, with a polynomial correction. After tilting at the decay
parameter, the return density has the heavy tail \[
k(t)\sim\frac{1}{2\sqrt{2\pi\theta}}t^{-3/2}. \] The corresponding
square-root singularity of the return transform is the analytic
expression of the one-large-excursion mechanism. The calculation treats
only the diagonal kernel; obtaining the asymptotic for general $i,j$ by
renewal methods would also require control of the entrance, exit and
taboo terms in~\eqref{eq:full-renewal}.

\section{The hub-and-traps model}
\label{sec:hub-traps}

We introduce an explicit hub-and-traps model in which polynomial decay
arises from a mixture of increasingly long holding times. The model
makes the one-long-excursion mechanism transparent and separates
fixed-state transition asymptotics from survival asymptotics. Its
transition graph is an infinite star: communication among the traps is
mediated by a distinguished hub, while their mean holding times increase
without bound.

The construction is reminiscent of the complete-graph trap models
studied by Fontes and Mathieu~\cite{FontesMathieu2008}. Their scaling
limits are described by $K$-processes with countably many stable states
and a distinguished instantaneous or fictitious state. The connection
here is only architectural. In the present model, the hub is an ordinary
stable state with positive holding time and finite outgoing rate,
killing occurs only at the hub, and the entrance and return rates are
chosen deterministically to yield explicit regularly varying transition
and survival asymptotics.

\medskip
\noindent
{\bf Definition and elementary properties.}\quad
Let
$C=\{a,1,2,\ldots\}$,
where $a$ is the hub and the positive integers are the traps. Fix
constants $c>0$, $d>0$, $\kappa>1$,
and put $r_n=cn^{-\kappa}$, $\mu_n=n^{-1}$.
At the hub, the chain is killed at rate $d$ and enters trap $n$ at
rate $r_n$. From trap $n$, it returns to the hub at rate $\mu_n$.
Thus
$q_{a0}=d$, $q_{an}=r_n$, $q_{na}=\mu_n$,
and all other off-diagonal rates are $0$. Write
\[
 R=\sum_{n\geq1}r_n=c\,\zeta(\kappa),
 \qquad
 q_a=d+R.
\]
The diagonal entries are $q_{aa}=-q_a$ and $q_{nn}=-\mu_n$, $n\geq 1$.
Since $\kappa>1$, the total entrance rate $R$ is finite. Moreover,
$\sup_{i\in C}q_i = \max\{d+R,1\} <\infty$. The $q$-matrix is therefore
stable and uniformly bounded, and thus regular, and so its minimal
process is nonexplosive. It is irreducible on~$C$.

At every visit to the hub, the probability of being killed before
entering another trap is \mbox{$d/(d+R)>0$}.
The number of completed trap excursions before absorption is therefore
geometrically distributed, and absorption occurs almost surely from
every state in $C$.
The mean holding time in trap $n$ is $1/\mu_n=n$.
Thus higher-index traps are deeper but, because $r_n=cn^{-\kappa}$,
are entered less frequently. Polynomial decay results from the balance
between their rarity and their depth.

\medskip
\noindent
{\bf Exponential-mixture asymptotics.}\quad
The long-time behaviour of the model is governed by mixtures of
exponential holding-time tails over traps of increasing depth. The
following elementary lemma gives the required asymptotic for such
mixtures.

\begin{lemma}[A regularly varying exponential mixture]
\label{lem:mixture-sum}
For $\alpha>1$, define
$S_\alpha(t) = \sum_{n\geq1}n^{-\alpha}e^{-t/n}$, $t>0$.
Then,
$S_\alpha(t) \sim \Gamma(\alpha-1)t^{1-\alpha}$, $t\to\infty$.
\end{lemma}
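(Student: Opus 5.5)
The plan is to compare the sum with an integral and then substitute $x=t/u$. Put $\phi_t(u)=u^{-\alpha}e^{-t/u}$ for $u>0$. With $x=t/u$ we have $\mathrm du=t x^{-2}\,\mathrm dx$ and $u^{-\alpha}=t^{-\alpha}x^{\alpha}$, so
\[
 \int_0^\infty \phi_t(u)\,\mathrm du = t^{1-\alpha}\int_0^\infty x^{\alpha-2}e^{-x}\,\mathrm dx = \Gamma(\alpha-1)\,t^{1-\alpha}.
\]
The integral converges at $x=0$ precisely because $\alpha>1$. It therefore suffices to show that the difference between the sum $S_\alpha(t)$ and this integral is $o(t^{1-\alpha})$.

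For the sum-integral comparison I would use the shape of $\phi_t$. Its logarithmic derivative is $-\alpha/u+t/u^2$, so $\phi_t$ is unimodal: it increases on $(0,t/\alpha]$ and decreases on $[t/\alpha,\infty)$. For a unimodal nonnegative function, the standard comparison gives
\[
 \Bigl|\sum_{n\geq1}\phi_t(n)-\int_0^\infty\phi_t(u)\,\mathrm du\Bigr| \leq 2\sup_u\phi_t(u).
\]
This holds because on each monotone piece the sum and the integral differ by at most one term, which is bounded by the maximum. The interval $(0,1)$ also needs care. There $\int_0^1\phi_t\leq\sup\phi_t$ once $t>\alpha$, since $\phi_t$ is increasing on $(0,1]$, and in fact $\int_0^1\phi_t\leq e^{-t}$, which is negligible.

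The maximum is $\phi_t(t/\alpha)=(\alpha/t)^{\alpha}e^{-\alpha}=O(t^{-\alpha})$, and $t^{-\alpha}=o(t^{1-\alpha})$. Hence
\[
 S_\alpha(t)=\Gamma(\alpha-1)t^{1-\alpha}+O(t^{-\alpha}),
\]
which is stronger than the claimed asymptotic.

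The only step that needs care is the unimodal comparison bound, in particular the bookkeeping at the mode and near $u=0$. A Riemann-sum alternative is also available: write $S_\alpha(t)=t^{1-\alpha}\cdot t^{-1}\sum_n (n/t)^{-\alpha}e^{-1/(n/t)}$, which is a Riemann sum with mesh $1/t$ for $\int_0^\infty v^{-\alpha}e^{-1/v}\,\mathrm dv=\Gamma(\alpha-1)$. That route requires dominated-convergence control of the tails at $v\to0$ and $v\to\infty$. The integrability at infinity uses $\alpha>1$. So I prefer the explicit monotonicity bound.
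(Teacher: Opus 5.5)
Your proof is correct, and it is organised differently from the paper's. You work at the original scale and evaluate $\int_0^\infty u^{-\alpha}e^{-t/u}\,\mathrm du=\Gamma(\alpha-1)t^{1-\alpha}$ exactly. You then use the global unimodality of $\phi_t$, whose mode is at $t/\alpha$, to bound the sum--integral discrepancy by $2\sup\phi_t=O(t^{-\alpha})$. The bookkeeping checks out, including the piece over $(0,1)$, where $\int_0^1\phi_t\leq\phi_t(1)=e^{-t}$ once $t>\alpha$.

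The paper instead rescales to the Riemann sum $t^{-1}\sum_n g_\alpha(n/t)$ with $g_\alpha(y)=y^{-\alpha}e^{-1/y}$ and proves convergence on a compact window $[\varepsilon,M]$. It then controls the two tails separately. The upper tail uses the crude bound $e^{-1/y}\leq 1$ together with $\sum_{n>Mt}n^{-\alpha}\leq (Mt)^{1-\alpha}/(\alpha-1)$. The lower tail uses monotonicity of $g_\alpha$ on $(0,1/\alpha)$, which is your unimodality applied on one side only.

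Your route is shorter and quantitative. It gives $S_\alpha(t)=\Gamma(\alpha-1)t^{1-\alpha}+O(t^{-\alpha})$, a relative error of $O(1/t)$, whereas the paper's argument yields only the first-order limit. The paper's structure pays off later, however. Lemma~\ref{lem:weighted-mixture} inserts an arbitrary bounded continuous weight $f(n/t)$, and the scaled conditional limit theorem needs the same argument uniformly over a time shift $s\in[0,A]$. With such weights, $u\mapsto\phi_t(u)f(u/t)$ need not be unimodal, so your comparison bound would not apply verbatim; you would need an extra approximation step, for instance via distribution functions. The window-plus-tails decomposition transfers directly.
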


\begin{proof}
Define
$g_\alpha(y)=y^{-\alpha}e^{-1/y}$, $y>0$. Then
\[
 t^{\alpha-1}S_\alpha(t) = \frac{1}{t} \sum_{n\geq1} g_\alpha\left(\frac{n}{t}\right).
\]
This is a Riemann sum on $(0,\infty)$.
The function $g_\alpha$ is integrable. Indeed, after the substitution
$z=1/y$, we get
\[
 \int_0^\infty g_\alpha(y)\,\mathrm dy
 =
 \int_0^\infty y^{-\alpha}e^{-1/y}\,\mathrm dy
 =
 \int_0^\infty z^{\alpha-2}e^{-z}\,\mathrm dz
 =
 \Gamma(\alpha-1).
\]
Fix $0<\varepsilon<M<\infty$. 
Since $g_\alpha$ is continuous on $[\varepsilon,M]$, this Riemann sum
converges to the corresponding integral:
\[
 \frac{1}{t} \sum_{\varepsilon t\leq n\leq Mt}
g_\alpha\left(\frac{n}{t}\right) \to \int_\varepsilon^M g_\alpha(y)\,\mathrm dy.
\]
For the upper tail, using $e^{-1/y}\leq 1$,
\[
 \frac{1}{t}\sum_{n>Mt} g_\alpha\left(\frac{n}{t}\right) \leq t^{\alpha-1}\sum_{n>Mt}n^{-\alpha}.
\]
The integral bound for $x^{-\alpha}$ gives
\[
 \sum_{n>Mt}n^{-\alpha} \leq \int_{Mt}^\infty x^{-\alpha}\,\mathrm dx = \frac{(Mt)^{1-\alpha}}{\alpha-1}.
\]
Consequently,
\[
 \limsup_{t\to\infty}
 \frac{1}{t}\sum_{n>Mt}
 g_\alpha\left(\frac{n}{t}\right)
 \leq
 \frac{M^{1-\alpha}}{\alpha-1},
\]
which tends to $0$ as $M\to\infty$.

For the lower tail, differentiation gives
$g_\alpha'(y) = y^{-\alpha-2}e^{-1/y}(1-\alpha y)$.
Thus $g_\alpha$ is increasing on $(0,1/\alpha)$. Choose
$0<\varepsilon<1/\alpha$, and put
$N_t=\lceil\varepsilon t\rceil-1$.
For all sufficiently large $t$ and $1\leq n\leq N_t$, monotonicity
gives
\[
 \frac{1}{t}g_\alpha\left(\frac{n}{t}\right) \leq \int_{n/t}^{(n+1)/t}g_\alpha(y)\,\mathrm dy.
\]
Consequently,
\[
 \frac{1}{t}\sum_{1\leq n<\varepsilon t} g_\alpha\left(\frac{n}{t}\right) \leq \int_{1/t}^{\varepsilon+1/t}g_\alpha(y)\,\mathrm dy.
\]
It follows that
\[
 \limsup_{t\to\infty}
 \frac{1}{t}\sum_{1\leq n<\varepsilon t}
 g_\alpha\left(\frac{n}{t}\right)
 \leq
 \int_0^\varepsilon g_\alpha(y)\,\mathrm dy.
\]
The right-hand side tends to $0$ as $\varepsilon\downarrow0$.

Combining the middle interval with the two tail estimates gives
\[
 \frac{1}{t}\sum_{n\geq1} g_\alpha\left(\frac{n}{t}\right) \to \int_0^\infty g_\alpha(y)\,\mathrm dy = \Gamma(\alpha-1).
\]
The asserted asymptotic statement follows.
\end{proof}

Two mixtures recur throughout the section. The function $M$
corresponds to a trap visit ending near time $t$, whereas $B$
corresponds to a trap visit still being in progress at time $t$.
Lemma~\ref{lem:mixture-sum} gives their respective asymptotic orders.

\begin{corollary}
\label{cor:hub-mixtures}
Define
$M(t)=\sum_{n\geq1}r_n\mu_ne^{-\mu_nt}$
and
$B(t)=\sum_{n\geq1}r_ne^{-\mu_nt}$.
Then
\[
M(t) = c\sum_{n\geq1}n^{-(\kappa+1)}e^{-t/n} \sim c\Gamma(\kappa)t^{-\kappa},
\text{ and }
B(t) = c\sum_{n\geq1}n^{-\kappa}e^{-t/n} \sim c\Gamma(\kappa-1)t^{-(\kappa-1)}.
\]
\end{corollary}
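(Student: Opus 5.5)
The plan is to substitute the definitions $r_n=cn^{-\kappa}$ and $\mu_n=n^{-1}$ and reduce both claims directly to Lemma~\ref{lem:mixture-sum}. For $M$, each summand is $r_n\mu_n e^{-\mu_n t}=cn^{-\kappa}\cdot n^{-1}e^{-t/n}=c\,n^{-(\kappa+1)}e^{-t/n}$, which gives the stated series identity $M(t)=c\,S_{\kappa+1}(t)$. For $B$, each summand is $r_ne^{-\mu_nt}=c\,n^{-\kappa}e^{-t/n}$, so $B(t)=c\,S_\kappa(t)$. Both series converge for $t>0$, and in fact at $t=0$ as well, because $\kappa>1$.

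Next I check the hypothesis of Lemma~\ref{lem:mixture-sum}, namely $\alpha>1$, in each case. For $M$ we take $\alpha=\kappa+1>2$. The lemma then gives $S_{\kappa+1}(t)\sim\Gamma(\kappa)t^{-\kappa}$, and hence $M(t)\sim c\Gamma(\kappa)t^{-\kappa}$. For $B$ we take $\alpha=\kappa$, which is admissible precisely because the model assumes $\kappa>1$. The lemma then gives $S_\kappa(t)\sim\Gamma(\kappa-1)t^{1-\kappa}=\Gamma(\kappa-1)t^{-(\kappa-1)}$. Multiplying an asymptotic equivalence by the positive constant $c$ preserves it, which completes the argument.

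There is no real obstacle here. The only point that needs care is the case $B$. There the exponent $\alpha=\kappa$ must exceed $1$ so that $\Gamma(\kappa-1)$ is finite and the Riemann-sum argument in the lemma applies. This requirement is exactly the standing assumption $\kappa>1$, which is also the condition that makes $R$ finite.
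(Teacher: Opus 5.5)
Your proposal is correct and follows the paper's proof exactly: substitute $r_n=cn^{-\kappa}$, $\mu_n=n^{-1}$ to get $M=c\,S_{\kappa+1}$ and $B=c\,S_\kappa$, then apply Lemma~\ref{lem:mixture-sum} with $\alpha=\kappa+1$ and $\alpha=\kappa$. You are also right that the standing assumption $\kappa>1$ is exactly what makes the second application admissible.
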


\begin{proof}
Apply Lemma~\ref{lem:mixture-sum} first with
$\alpha=\kappa+1$ and then with $\alpha=\kappa$.
\end{proof}

We shall also need a weighted version.

\begin{lemma}[Weighted mixture limit]
\label{lem:weighted-mixture}
Let $\alpha>1$, and let $f$ be bounded and continuous on
$[0,\infty)$. Then
\[
 t^{\alpha-1} \sum_{n\geq1} n^{-\alpha}e^{-t/n}
f\left(\frac{n}{t}\right) \to
\int_0^\infty y^{-\alpha}e^{-1/y}f(y)\,\mathrm dy.
\]
\end{lemma}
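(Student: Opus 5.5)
We follow the proof of Lemma~\ref{lem:mixture-sum} closely, replacing $g_\alpha$ by $g_\alpha f$, where $g_\alpha(y)=y^{-\alpha}e^{-1/y}$. As there,
\[
 t^{\alpha-1}\sum_{n\geq1}n^{-\alpha}e^{-t/n}f\left(\frac nt\right)
 =\frac1t\sum_{n\geq1}(g_\alpha f)\left(\frac nt\right).
\]
Write $\|f\|_\infty=\sup_{y\geq0}|f(y)|<\infty$. Then $|g_\alpha f|\leq \|f\|_\infty g_\alpha$. Since $g_\alpha$ is integrable with integral $\Gamma(\alpha-1)$, the limit integral $\int_0^\infty g_\alpha(y)f(y)\,\mathrm dy$ converges absolutely. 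The plan is to split the sum into a middle range $\varepsilon t\leq n\leq Mt$ and two tails, and to bound each tail by $\|f\|_\infty$ times the corresponding tail estimate already obtained for $g_\alpha$.

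For the middle range, with $0<\varepsilon<M<\infty$ fixed, $g_\alpha f$ is continuous on the compact interval $[\varepsilon,M]$, hence uniformly continuous and Riemann integrable. The Riemann sums $\frac1t\sum_{\varepsilon t\leq n\leq Mt}(g_\alpha f)(n/t)$ therefore converge to $\int_\varepsilon^M g_\alpha f\,\mathrm dy$; the mesh is $1/t$, and the endpoint effects contribute $O(1/t)$. For the tails, take absolute values. The upper tail is at most $\|f\|_\infty\,M^{1-\alpha}/(\alpha-1)$ in the limsup. For the lower tail, with $\varepsilon<1/\alpha$, the monotonicity of $g_\alpha$ on $(0,1/\alpha)$ gives the bound $\|f\|_\infty\int_0^\varepsilon g_\alpha\,\mathrm dy$. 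Both bounds are exactly those in the proof of Lemma~\ref{lem:mixture-sum}. The same two quantities also bound $\int_M^\infty|g_\alpha f|$ and $\int_0^\varepsilon|g_\alpha f|$.

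Combining the three pieces gives
\[
 \limsup_{t\to\infty}\left|\frac1t\sum_{n\geq1}(g_\alpha f)\left(\frac nt\right)-\int_0^\infty g_\alpha f\,\mathrm dy\right|
 \leq 2\|f\|_\infty\left(\frac{M^{1-\alpha}}{\alpha-1}+\int_0^\varepsilon g_\alpha\,\mathrm dy\right).
\]
Letting $M\to\infty$ and $\varepsilon\downarrow0$ gives the result. The only point needing care is that $f$ need not be monotone, so the monotonicity argument must be applied to $g_\alpha$ alone after bounding $|f|$ by $\|f\|_\infty$, rather than to the product $g_\alpha f$. Beyond this, the argument is a routine extension of Lemma~\ref{lem:mixture-sum}. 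Continuity of $f$ at $0$ is not needed, and continuity on $(0,\infty)$ suffices.
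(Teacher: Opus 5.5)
Your proposal is correct and follows essentially the same route as the paper: a Riemann-sum limit on $[\varepsilon,M]$, plus the upper- and lower-tail estimates from Lemma~\ref{lem:mixture-sum} multiplied by $\|f\|_\infty$, with the monotonicity argument applied to $g_\alpha$ alone. You give more detail than the paper's brief proof, including the matching bounds on the tails of the limit integral. Your remark that continuity of $f$ on $(0,\infty)$ suffices is also correct.
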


\begin{proof}
Write the expression as
\[
 \frac{1}{t}\sum_{n\geq1} g_\alpha\left(\frac{n}{t}\right) f\left(\frac{n}{t}\right),
\]
where $g_\alpha(y)=y^{-\alpha}e^{-1/y}$.
On each compact interval $[\varepsilon,M]\subset(0,\infty)$,
convergence follows from the Riemann-sum theorem.
Since $f$ is bounded, the lower- and
upper-tail estimates used in the proof of
Lemma~\ref{lem:mixture-sum} remain valid after multiplication by
$\lVert f\rVert_\infty$. This proves the result.
\end{proof}

\medskip
\noindent
{\bf The defective hub-return kernel.}\quad
Let $H$ denote the initial holding time at the hub. Then
$H\sim\operatorname{Exp}(q_a)$.
If the chain leaves the hub for trap $n$, its holding time there is
independent of $H$ and has an exponential distribution with rate
$\mu_n$.
The defective density of the first return time to the hub is therefore
\begin{equation}
\label{eq:hub-return-density}
 k(t)
 =
 \sum_{n\geq1}
 \int_0^t
 r_ne^{-q_as}\mu_ne^{-\mu_n(t-s)}\,\mathrm ds.
\end{equation}
Equivalently,
\begin{equation}
\label{eq:hub-return-convolution}
 k(t)
 =
 \int_0^t e^{-q_as}M(t-s)\,\mathrm ds.
\end{equation}
Its total mass is
\[
 \rho
 =
 \int_0^\infty k(t)\,\mathrm dt
 =
 \sum_{n\geq1}
 \left(\int_0^\infty r_ne^{-q_as}\,\mathrm ds\right)
 \left(\int_0^\infty\mu_ne^{-\mu_nu}\,\mathrm du\right).
\]
Hence
\begin{equation}
\label{eq:hub-rho}
 \rho=\frac{R}{q_a}=\frac{R}{d+R}\in(0,1),
 \qquad
 1-\rho=\frac{d}{q_a}.
\end{equation}

\begin{lemma}[Tail of the hub-return density]
\label{lem:hub-return-tail}
The defective return density satisfies
\[
 k(t) \sim \frac{c\Gamma(\kappa)}{q_a}t^{-\kappa}.
\]
\end{lemma}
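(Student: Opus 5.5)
We use the convolution representation \eqref{eq:hub-return-convolution}, $k=e^{-q_a\cdot}\ast M$, together with Corollary~\ref{cor:hub-mixtures}, which gives $M(t)\sim c\Gamma(\kappa)t^{-\kappa}$. The expected answer is $M(t)\int_0^\infty e^{-q_as}\,\mathrm ds = M(t)/q_a$. This is the standard fact that convolving a regularly varying function with an exponentially decaying integrable kernel preserves its tail. We prove it directly rather than quoting a general result.

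Fix $\delta\in(0,1)$ and split the integral in \eqref{eq:hub-return-convolution} at $s=\delta t$.

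On the range $0\le s\le\delta t$ we have $t-s\in[(1-\delta)t,t]$. Since $M$ is a positive mixture of decreasing exponentials, it is nonincreasing. Hence
\[
 M(t)\int_0^{\delta t}e^{-q_as}\,\mathrm ds
 \le\int_0^{\delta t}e^{-q_as}M(t-s)\,\mathrm ds
 \le M((1-\delta)t)\int_0^{\infty}e^{-q_as}\,\mathrm ds .
\]
Divide by $t^{-\kappa}$ and let $t\to\infty$. The lower bound tends to $c\Gamma(\kappa)/q_a$. The upper bound tends to $c\Gamma(\kappa)(1-\delta)^{-\kappa}/q_a$.

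On the range $\delta t<s\le t$, use the uniform bound $M(u)\le M(0)=\sum_n r_n\mu_n\le R<\infty$, which holds because $\mu_n\le1$. The contribution is therefore at most $R\,e^{-q_a\delta t}/q_a$. This is $o(t^{-\kappa})$ because of the exponential decay.

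Combining the two ranges and then letting $\delta\downarrow0$ gives
\[
 t^{\kappa}k(t)\to\frac{c\Gamma(\kappa)}{q_a},
\]
which is the assertion.

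There is no real obstacle. The only point that needs care is the monotonicity of $M$, which replaces a uniform-convergence argument for regularly varying functions. If monotonicity were unavailable, we would instead use the uniform convergence theorem, $M(t-s)/M(t)\to1$ uniformly for $s\le\delta t$ as $\delta\to0$, which holds since $M$ is regularly varying.
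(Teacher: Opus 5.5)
Your proof is correct. It differs from the paper's in how the range where $t-s$ is comparable to $t$ is handled.

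The paper splits \eqref{eq:hub-return-convolution} at a fixed $A$ and at $t/2$. On $[0,A]$ it uses the uniform convergence theorem for regularly varying functions to get $M(t-s)\sim M(t)$ uniformly. On $[A,t/2]$ it uses Potter bounds to get $M(t-s)\le CM(t)$, which makes that piece at most $CM(t)\int_A^\infty e^{-q_as}\,\mathrm ds$. On $[t/2,t]$ it uses $M\le c\,\zeta(\kappa+1)$. It then lets $t\to\infty$ and afterwards $A\to\infty$.

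You instead split at the proportional point $\delta t$. You use the fact that $M$, as a positive mixture of decreasing exponentials, is nonincreasing, which gives the exact sandwich $M(t)\le M(t-s)\le M((1-\delta)t)$. This reduces everything to the asymptotic for $M$ in Corollary~\ref{cor:hub-mixtures}, followed by $\delta\downarrow0$.

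Your route is more elementary and self-contained: it needs no uniform convergence theorem, no Potter bounds, and gives a sharper exponentially small remainder. However, it depends on a structural feature specific to $M$. The paper's route uses only regular variation, boundedness, and integrability of the exponential kernel. That is why the paper can say ``the same argument'' in Theorem~\ref{thm:hub-diagonal} for $\int_0^t e^{-q_as}u(t-s)\,\mathrm ds\sim u(t)/q_a$. There the renewal density $u$ satisfies $u(0)=0$ and is not known to be even eventually monotone, so your sandwich does not transfer directly.

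Your closing fallback remark is what would be needed in that case. Stated precisely, it reads $\limsup_{t\to\infty}\sup_{0\le s\le\delta t}|M(t-s)/M(t)-1|\le(1-\delta)^{-\kappa}-1$, which tends to $0$ as $\delta\downarrow0$.
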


\begin{proof}
By Corollary~\ref{cor:hub-mixtures},
$M(t)\sim c\Gamma(\kappa)t^{-\kappa}$.
In particular, $M$ is regularly varying with index~$-\kappa$.
Fix $A>0$ and divide \eqref{eq:hub-return-convolution} into
\[
k(t) = \int_0^A e^{-q_as}M(t-s)\,\mathrm ds 
+ \int_A^t e^{-q_as}M(t-s)\,\mathrm ds.
\]

For fixed $A$, the uniform convergence theorem for regularly varying
functions~\cite[Theorem~1.2.1, p.~6]{BinghamGoldieTeugels1987},
applied with $(t-s)/t\to1$ uniformly for $0\leq s\leq A$, gives
\[
 \sup_{0\leq s\leq A} \left| \frac{M(t-s)}{M(t)}-1 \right| \to 0.
\]
Consequently,
\[
\int_0^A e^{-q_as}M(t-s)\,\mathrm ds \sim M(t)\int_0^A e^{-q_as}\,\mathrm ds.
\]

It remains to control the integral over $[A,t]$. Split this interval
at $t/2$. 
On $[A,t/2]$, Potter bounds for the regularly varying
function $M$~\cite[Theorem~1.5.6, p.~25]{BinghamGoldieTeugels1987}
give a constant $C$ such that, for all sufficiently large~$t$,
$M(t-s)\leq C M(t)$, $A\leq s\leq t/2$.
Therefore,
\[
\int_A^{t/2}e^{-q_as}M(t-s)\,\mathrm ds 
\leq C M(t)\int_A^\infty e^{-q_as}\,\mathrm ds.
\]
On $[t/2,t]$, use the bound
\[
 M(u) = \sum_{n\geq1}r_n\mu_ne^{-\mu_nu} \leq \sum_{n\geq1}r_n\mu_n
 = c\,\zeta(\kappa+1).
\]
It follows that
\[
\int_{t/2}^t e^{-q_as}M(t-s)\,\mathrm ds \leq c\,\zeta(\kappa+1)t e^{-q_at/2},
\]
which is $o(M(t))$ because $M(t)$ decays polynomially.
Letting first $t\to\infty$ and then $A\to\infty$ gives
\[
 k(t) \sim M(t)\int_0^\infty e^{-q_as}\,\mathrm ds = \frac{M(t)}{q_a}.
\]
The result follows from the asymptotic formula for $M$.
\end{proof}

\medskip
\noindent
{\bf The hub transition probability.}\quad
A first-return decomposition at the hub gives the exact defective
renewal equation
\begin{equation}
\label{eq:hub-renewal-equation}
 p_{aa}(t)
 =
 e^{-q_at}
 +
 \int_0^t p_{aa}(t-s)k(s)\,\mathrm ds.
\end{equation}
Let
$u(t)=\sum_{n\geq1}k^{\ast n}(t)$
be the defective renewal density. Iteration of
\eqref{eq:hub-renewal-equation} gives
\begin{equation}
\label{eq:hub-renewal-solution}
 p_{aa}(t)
 =
 e^{-q_at}
 +
 \int_0^t e^{-q_as}u(t-s)\,\mathrm ds.
\end{equation}
By Lemma~\ref{lem:hub-return-tail}, the defective return density $k$
is regularly varying with index $-\kappa<-1$. Let
$f(t)={k(t)}/{\rho}$.
We first record the regular-variation criterion for density
subexponentiality that will be used to verify the hypotheses of the
defective-renewal result.

\begin{lemma}[Regular variation implies density subexponentiality]
\label{lem:rv-density-subexponential}
Let $f$ be a probability density on $\mathbb R_+$, and suppose that
$f$ is eventually positive and regularly varying with index
$-\kappa$, where $\kappa>1$. Then $f$ is a subexponential
density on $\mathbb R_+$; that is,
${f(t+s)}/{f(t)}\to 1$ as $t\to\infty$,
for every fixed $s\in\mathbb R$, and
$(f\ast f)(t)\sim 2f(t)$ as $t\to\infty$.
\end{lemma}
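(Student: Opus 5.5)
The plan is to verify the two properties separately: the long-tailedness from the uniform convergence theorem for regularly varying functions, and the convolution equivalence by splitting the convolution integral into three pieces.

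\emph{Long-tailedness.} Write $f(t)=t^{-\kappa}\ell(t)$ with $\ell$ slowly varying. For fixed $s\in\mathbb R$ we have $(t+s)/t\to1$, so the uniform convergence theorem for regularly varying functions \cite[Theorem~1.2.1]{BinghamGoldieTeugels1987} gives
\[
\frac{f(t+s)}{f(t)}\to1 .
\]
The same theorem gives the stronger uniform statement
\[
\sup_{0\le s\le A}\bigl|f(t-s)/f(t)-1\bigr|\to0 ,
\]
which is the form needed below. Because $f$ is only eventually positive, I would restrict to $t$ beyond the point where $f>0$ and $f$ is locally bounded. Regular variation here is understood for a measurable function, so local boundedness on $[t_0,\infty)$ follows from uniform convergence on compacts.

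\emph{Convolution equivalence.} By symmetry,
\[
(f\ast f)(t)=2\int_0^{t/2} f(t-s)f(s)\,\mathrm ds ,
\]
with a harmless correction at the midpoint. Fix $A>0$ and split $[0,t/2]$ into $[0,A]$ and $[A,t/2]$.
\begin{itemize}
\item On $[0,A]$, the uniform convergence above gives
\[
\int_0^A f(t-s)f(s)\,\mathrm ds\sim f(t)\int_0^A f(s)\,\mathrm ds .
\]
\item On $[A,t/2]$, we have $t-s\in[t/2,t]$. Potter bounds \cite[Theorem~1.5.6]{BinghamGoldieTeugels1987} then give $f(t-s)\le Cf(t)$ for $t$ large. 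Hence this piece is at most
\[
Cf(t)\int_A^\infty f(s)\,\mathrm ds .
\]
\end{itemize}
Letting $t\to\infty$ and then $A\to\infty$, and using $\int_0^\infty f=1$, yields $(f\ast f)(t)\sim 2f(t)$.

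\emph{Main obstacle.} The delicate step is the Potter bound on $[A,t/2]$ when $f$ is not monotone. One only knows that $\sup_{u\in[t/2,t]}f(u)/f(t)$ stays bounded, and this relies on eventual positivity together with the uniform convergence theorem on the compact ratio set $[1/2,1]$. That is precisely what Potter's theorem delivers once $t$ exceeds some $t_0$.

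A second point needs care: on the region where $s$ is small but not covered by $[0,A]$, no information about $f$ is used other than integrability. So the only place local behaviour of $f$ near $0$ enters is through $\int_0^A f$, and integrability suffices there.

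I would finally remark that nothing in the argument uses $\kappa>1$ beyond the fact that $f$ is a probability density, which already forces $\kappa\ge1$.
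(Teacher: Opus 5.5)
Your proposal is correct and is essentially the paper's proof: long-tailedness comes from the uniform convergence theorem. The convolution is then split so that the pieces near the endpoints each contribute $f(t)\int_0^A f$, and the middle piece is at most $2Cf(t)\int_A^\infty f$. Here the constant comes from $f(t-s)\le Cf(t)$ for $(t-s)/t\in[1/2,1]$ and is independent of $A$. The differences are cosmetic: you fold $(f\ast f)(t)$ by symmetry before splitting, whereas the paper uses $I_1,I_2,I_3$ with $I_3=I_1$. You also cite Potter bounds where the paper applies the uniform convergence theorem on $[1/2,1]$. Your closing remark is right as well: only integrability of $f$ is used, and integrability already forces $\kappa\geq1$.
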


\begin{proof}
Since $f$ is regularly varying with index $-\kappa$, there is a
slowly varying function $L$ such that
$f(t)=t^{-\kappa}L(t)$
for all sufficiently large $t$. Hence, for every fixed
$s\in\mathbb R$,
\[
 \frac{f(t+s)}{f(t)} = \left(1+\frac{s}{t}\right)^{-\kappa}
\frac{L(t+s)}{L(t)} \to 1.
\]
Thus $f$ is long-tailed.

It remains to prove the convolution asymptotic. Fix $A>0$. For
$t>2A$, divide the convolution into
$(f\ast f)(t) = I_1(t,A)+I_2(t,A)+I_3(t,A)$,
where
$I_1(t,A)=\int_0^A f(s)f(t-s)\,\mathrm ds$,
$I_2(t,A)=\int_A^{t-A}f(s)f(t-s)\,\mathrm ds$,
and
$I_3(t,A)=\int_{t-A}^t f(s)f(t-s)\,\mathrm ds$.
By the uniform convergence theorem for regularly varying functions,
\[
 \sup_{0\leq s\leq A} \left| \frac{f(t-s)}{f(t)}-1 \right| \to 0.
\]
Consequently,
$I_1(t,A) \sim f(t)\int_0^A f(s)\,\mathrm ds$.
After the substitution $u=t-s$, symmetry gives
$I_3(t,A)=I_1(t,A)$,
and therefore
$I_3(t,A) \sim f(t)\int_0^A f(s)\,\mathrm ds$.

We now control the middle integral. By symmetry,
$I_2(t,A) = 2\int_A^{t/2}f(s)f(t-s)\,\mathrm ds$,
where the value at the single point $t/2$ is immaterial. For
$A\leq s\leq t/2$, the ratio $(t-s)/t$ lies in $[1/2,1]$.
The uniform convergence theorem for regularly varying functions,
applied on this compact ratio interval, gives a constant
$C<\infty$, independent of $A$, such that
$f(t-s)\leq C f(t)$, $A\leq s\leq t/2$,
for all sufficiently large $t$. It follows that
$I_2(t,A) \leq 2C f(t)\int_A^{t/2}f(s)\,\mathrm ds \leq 2C f(t)\int_A^\infty f(s)\,\mathrm ds$.

Combining these estimates gives
\[
 2\int_0^A f(s)\,\mathrm ds
 \leq
 \liminf_{t\to\infty}
 \frac{(f\ast f)(t)}{f(t)}
\]
and
\[
 \limsup_{t\to\infty}
 \frac{(f\ast f)(t)}{f(t)}
 \leq
 2\int_0^A f(s)\,\mathrm ds
 +
 2C\int_A^\infty f(s)\,\mathrm ds.
\]
Because $f$ is a probability density, letting $A\to\infty$ yields
${(f\ast f)(t)}/{f(t)} \to 2$.
Thus $f$ is a subexponential density on $\mathbb R_+$.
\end{proof}

\begin{proposition}[Subexponentiality and Kesten bound for the return
density]
\label{prop:hub-kesten-bound}
Let $k$ be the defective hub-return density and put
$f={k}/{\rho}$.
Then $f$ is a bounded subexponential probability density on $\mathbb R_+$.
In particular,
$f^{\ast n}(t)\sim nf(t)$, $t\to\infty$,
for every fixed $n\geq1$.
Moreover, for every $\varepsilon\in(0,1)$, there
exist constants $C_\varepsilon<\infty$ and $t_\varepsilon<\infty$
such that
$f^{\ast n}(t) \leq C_\varepsilon(1+\varepsilon)^n f(t)$, 
$t\geq t_\varepsilon$, $n\geq1$.
Thus the threshold $t_\varepsilon$ is independent of $n$.
\end{proposition}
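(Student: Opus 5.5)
The plan is to derive everything from the regular variation of $k$ established in Lemma~\ref{lem:hub-return-tail}, using the elementary criterion of Lemma~\ref{lem:rv-density-subexponential}, and then to prove the Kesten-type bound by a direct induction on $n$ with a single threshold chosen once and for all.

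\textbf{Step 1: boundedness and positivity.} From \eqref{eq:hub-return-convolution} and the bound $M(u)\leq c\,\zeta(\kappa+1)$ used in the proof of Lemma~\ref{lem:hub-return-tail}, we get $k(t)\leq c\,\zeta(\kappa+1)/q_a$. Hence $\|f\|_\infty<\infty$. Every term of \eqref{eq:hub-return-density} is strictly positive for $t>0$, so $f>0$ on $(0,\infty)$. By \eqref{eq:hub-rho}, $f$ integrates to $1$.

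\textbf{Step 2: subexponentiality.} Lemma~\ref{lem:hub-return-tail} shows that $f$ is regularly varying with index $-\kappa$, $\kappa>1$. Lemma~\ref{lem:rv-density-subexponential} then gives $f\in\mathcal S_d$. The relation $f^{\ast n}(t)\sim nf(t)$ for each fixed $n$ follows by induction, as in \cite[Corollary~4]{AsmussenFossKorshunov2003}. Alternatively, repeat the three-piece splitting of Lemma~\ref{lem:rv-density-subexponential} for $f^{\ast n}\ast f$. This uses that $f^{\ast n}$ is regularly varying with the same index (by the induction hypothesis $f^{\ast n}\sim nf$), together with the uniform convergence theorem and Potter-type bounds on $[A,t/2]$.

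\textbf{Step 3: the Kesten bound.} This is the main step. Fix $\varepsilon\in(0,1)$ and choose $T$ so large that $\int_0^T f\geq 1-\varepsilon/2$. By $(f\ast f)(t)\sim 2f(t)$ and the uniform convergence theorem on $[0,T]$,
\[
\int_T^t f(u)f(t-u)\,\mathrm du=(f\ast f)(t)-\int_0^T f(u)f(t-u)\,\mathrm du\sim f(t)\Bigl(2-\int_0^T f\Bigr).
\]
Thus this quantity is at most $(1+\varepsilon)f(t)$ for $t\geq t_\varepsilon$, after enlarging $t_\varepsilon\geq T$. Also by uniform convergence, $\sup_{t-T\leq s\leq t}f(s)\leq C_T f(t)$ for $t\geq t_\varepsilon$.

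Put $a_n=\sup_{t\geq t_\varepsilon}f^{\ast n}(t)/f(t)$, so that $a_1=1$. For $t\geq t_\varepsilon$, split
\[
f^{\ast(n+1)}(t)=\int_0^{t-T}f^{\ast n}(t-s)f(s)\,\mathrm ds+\int_{t-T}^t f^{\ast n}(t-s)f(s)\,\mathrm ds .
\]
In the first integral $t-s\geq T$. The technical point is that we need $t-s\geq t_\varepsilon$ to apply $a_n$. I would handle this by taking $T=t_\varepsilon$ from the start, enlarging $T$ if necessary so that both requirements hold simultaneously. The first integral is then at most $a_n\int_T^t f(u)f(t-u)\,\mathrm du\leq a_n(1+\varepsilon)f(t)$. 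The second integral is at most $C_Tf(t)\int_0^T f^{\ast n}\leq C_Tf(t)$.

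Hence $a_{n+1}\leq(1+\varepsilon)a_n+C_T$, and iterating gives $a_n\leq (1+C_T/\varepsilon)(1+\varepsilon)^n$. The threshold $t_\varepsilon$ was fixed before the induction, so it is independent of $n$, as claimed.

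The only delicate point is this compatibility between $T$ and $t_\varepsilon$. It is resolved by fixing $T$ first and then choosing $t_\varepsilon\geq T$ from the two asymptotic relations. If that proves awkward, the fallback is to use $\sup_{u\leq t_\varepsilon}f^{\ast n}(u)\leq\|f\|_\infty$ together with positivity and continuity of $f$ on compacts to absorb the range $T\leq t-s<t_\varepsilon$ into the constant.
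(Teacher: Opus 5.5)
Your Steps 1 and 2 match the paper's proof: boundedness from $M(u)\le M(0)$, regular variation from Lemma~\ref{lem:hub-return-tail}, and subexponentiality from Lemma~\ref{lem:rv-density-subexponential}.

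The routes diverge at the Kesten bound. The paper simply invokes the known density form of Kesten's bound for bounded subexponential densities (Asmussen--Foss--Korshunov, Foss--Korshunov--Zachary). You instead prove it by an induction with a threshold fixed in advance. That is a legitimate, self-contained alternative, and it makes the $n$-independence of the threshold visible by construction.

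However, your primary way of reconciling $T$ and $t_\varepsilon$ is circular as stated. The inequality $\int_T^t f(u)f(t-u)\,\mathrm du\le(1+\varepsilon)f(t)$ comes from an asymptotic relation with $T$ fixed, so its threshold depends on $T$. Setting $T=t_\varepsilon$ and then enlarging $T$ moves that threshold again. (For this regularly varying $f$ one can show the inequality holds for all $t\ge T$ once $T$ is large, by splitting at $t/2$ with Potter bounds whose thresholds do not depend on $T$. That is an additional argument you have not given.)

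Your fallback is what makes the proof work, and it is cleanest to build it into the definition. Put $a_n=\sup_{t\ge T}f^{\ast n}(t)/f(t)$, with $T$ the small threshold rather than $t_\varepsilon$.
\begin{itemize}
\item For $T\le t\le t_\varepsilon$ you have $f^{\ast(n+1)}(t)/f(t)\le\|f\|_\infty/\min_{[T,t_\varepsilon]}f$. This is finite because $f$ is continuous ($M$ is a uniformly convergent series of continuous functions) and strictly positive on $(0,\infty)$.
\item For $t\ge t_\varepsilon$, your split at $u=t-s=T$ now uses $a_n$ only on $u\ge T$, where it is defined.
\end{itemize}
Hence $a_{n+1}\le\max\{\|f\|_\infty/\min_{[T,t_\varepsilon]}f,\ (1+\varepsilon)a_n+C_T\}$. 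The same iteration gives $a_n\le K_\varepsilon(1+\varepsilon)^n$, so the bound holds for all $t\ge T$ and all $n$.

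If you keep $a_n$ as a supremum over $t\ge t_\varepsilon$ and absorb the strip $T\le t-s<t_\varepsilon$ instead, note one extra requirement. Bounding $\int_{t-t_\varepsilon}^{t-T}f$ by a multiple of $f(t)$ for large $t$ needs the uniform convergence theorem, not only positivity and continuity on compacts.

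Compared with the paper, your argument exploits features specific to this $f$ (boundedness, continuity, positivity, regular variation) rather than the general theory. In exchange it is elementary and fully explicit. The paper's citation is shorter and applies to any bounded subexponential density.
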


\begin{proof}
By Lemma~\ref{lem:hub-return-tail},
\[
 k(t) \sim \frac{c\Gamma(\kappa)}{q_a}t^{-\kappa}.
\]
Since $\rho\in(0,1)$,
\[
 f(t) \sim \frac{c\Gamma(\kappa)} {\rho q_a}t^{-\kappa}.
\]
We first verify that $f$ is globally bounded. Indeed,
$M(u) \leq M(0) = c\,\zeta(\kappa+1)$,
and hence
\[
 k(t) = \int_0^t e^{-q_as}M(t-s)\,\mathrm ds \leq c\,\zeta(\kappa+1)
\int_0^t e^{-q_as}\,\mathrm ds \leq \frac{c\,\zeta(\kappa+1)}{q_a}.
\]
It follows that
\[
 \sup_{t\geq 0}f(t) \leq \frac{c\,\zeta(\kappa+1)} {\rho q_a} <\infty.
\]
Since
\[
 f(t)\sim \frac{c\Gamma(\kappa)}{\rho q_a}t^{-\kappa}, \quad \kappa>1,
\]
the probability density $f$ is regularly varying with index
$-\kappa<-1$.
Lemma~\ref{lem:rv-density-subexponential} therefore shows that $f$
is a subexponential density on $\mathbb R_+$. Consequently, for every
fixed $n\geq1$,
$f^{\ast n}(t)\sim nf(t)$, $t\to\infty$.
Since $f$ is a bounded subexponential density on $\mathbb R_+$,
the density form of Kesten's bound,
\cite[Proposition~8]{AsmussenFossKorshunov2003} or
\cite[Theorem~4.11]{FossKorshunovZachary2013}, gives, for every
$\varepsilon\in(0,1)$, constants
$C_\varepsilon<\infty$ and $t_\varepsilon<\infty$ such that
\[
 f^{\ast n}(t)
 \leq C_\varepsilon(1+\varepsilon)^n f(t),
 \qquad t\geq t_\varepsilon,\quad n\geq1.
\]
In particular, the threshold $t_\varepsilon$ is independent of $n$.
This is precisely the uniform estimate required in
Proposition~\ref{prop:defective-renewal-density}.
\end{proof}

Propositions~\ref{prop:hub-kesten-bound} and
\ref{prop:defective-renewal-density} therefore give
\begin{equation}
\label{eq:defective-renewal-density}
 u(t)
 :=
 \sum_{n\geq1}k^{\ast n}(t)
 \sim
 \frac{k(t)}{(1-\rho)^2}.
\end{equation}

\begin{theorem}[Hub transition asymptotic]
\label{thm:hub-diagonal}
As $t\to\infty$,
\[
 p_{aa}(t) \sim \frac{c\Gamma(\kappa)}{d^2}t^{-\kappa}.
\]
\end{theorem}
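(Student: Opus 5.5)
We start from the iterated renewal identity \eqref{eq:hub-renewal-solution}, $p_{aa}(t)=e^{-q_at}+\int_0^t e^{-q_as}u(t-s)\,\mathrm ds$, together with the renewal-density asymptotic \eqref{eq:defective-renewal-density}, $u(t)\sim k(t)/(1-\rho)^2$. Here the decay parameter is $\lambda=0$: the diagonal entry decays polynomially, so no exponential tilt is needed. We are then in the setting of Proposition~\ref{prop:defective-renewal-density} with $h(t)=e^{-q_at}$. Two inputs must be checked: $h(t)=o(k(t))$, and the tail-preservation property $(h\ast u)(t)\sim\bigl(\int_0^\infty h\bigr)u(t)$. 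The first is immediate, since $k$ decays like $t^{-\kappa}$ by Lemma~\ref{lem:hub-return-tail} while $h$ decays exponentially.

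For tail preservation we repeat the argument used in the proof of Lemma~\ref{lem:hub-return-tail}, with $M$ replaced by $u$. By \eqref{eq:defective-renewal-density} and Lemma~\ref{lem:hub-return-tail}, $u$ is regularly varying with index $-\kappa$. Fix $A>0$ and split the convolution at $A$ and at $t/2$.
\begin{itemize}
\item On $[0,A]$, the uniform convergence theorem gives $u(t-s)/u(t)\to1$ uniformly in $s$, so this piece is $\sim u(t)\int_0^A e^{-q_as}\,\mathrm ds$.
\item On $[A,t/2]$, the Potter bound $u(t-s)\le Cu(t)$ bounds this piece by $Cu(t)\int_A^\infty e^{-q_as}\,\mathrm ds$.
\item On $[t/2,t]$, we need a global bound on $u$. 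Since $k\le c\zeta(\kappa+1)/q_a=:K_\infty$, we get $k^{\ast n}\le K_\infty\rho^{n-1}$, hence $u\le K_\infty/(1-\rho)$. This piece is then at most $\text{const}\cdot e^{-q_at/2}=o(u(t))$.
\end{itemize}
Letting $t\to\infty$ and then $A\to\infty$ gives $(h\ast u)(t)\sim u(t)/q_a$.

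Combining these steps, $p_{aa}(t)\sim (h\ast u)(t)\sim k(t)/\bigl(q_a(1-\rho)^2\bigr)$. The initial term $e^{-q_at}$ is negligible. Substituting $1-\rho=d/q_a$ from \eqref{eq:hub-rho} and $k(t)\sim c\Gamma(\kappa)t^{-\kappa}/q_a$ gives
\[
p_{aa}(t)\sim \frac{c\Gamma(\kappa)}{q_a}\,t^{-\kappa}\cdot\frac{1}{q_a}\cdot\frac{q_a^2}{d^2}=\frac{c\Gamma(\kappa)}{d^2}\,t^{-\kappa}.
\]

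The main obstacle is the tail-preservation step, specifically the region $s\in[t/2,t]$. There we need the a priori global boundedness of $u$, which the geometric series bound supplies. The other ingredients follow directly from results already established: the subexponentiality and Kesten bound of Proposition~\ref{prop:hub-kesten-bound}, and the renewal-density asymptotic \eqref{eq:defective-renewal-density}.
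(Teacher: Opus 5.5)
Your proposal is correct and follows essentially the same route as the paper: the iterated renewal identity \eqref{eq:hub-renewal-solution}, the renewal-density asymptotic \eqref{eq:defective-renewal-density}, and a repeat of the splitting argument from Lemma~\ref{lem:hub-return-tail} to obtain $\int_0^t e^{-q_as}u(t-s)\,\mathrm ds\sim u(t)/q_a$. Your explicit bound $u\le K_\infty/(1-\rho)$ on the range $s\in[t/2,t]$ fills in a detail that the paper leaves implicit in ``the same argument''; integrability of $u$, with $\int_0^\infty u=\rho/(1-\rho)$, would also suffice there.
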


\begin{proof}
The first term in \eqref{eq:hub-renewal-solution} is exponentially
small. Since $u$ is regularly varying and $e^{-q_as}$ is integrable,
the same argument used in the proof of Lemma~\ref{lem:hub-return-tail} gives
\[
 \int_0^t e^{-q_as}u(t-s)\,\mathrm ds \sim \frac{u(t)}{q_a}.
\]
Using \eqref{eq:defective-renewal-density},
Lemma~\ref{lem:hub-return-tail}, and \eqref{eq:hub-rho},
\[
 p_{aa}(t) \sim \frac{1}{q_a} \frac{k(t)}{(1-\rho)^2} \sim \frac{1}{q_a} \frac{c\Gamma(\kappa)}{q_a} \frac{q_a^2}{d^2}t^{-\kappa}.
\]
This reduces to
\[
 p_{aa}(t) \sim \frac{c\Gamma(\kappa)}{d^2}t^{-\kappa}.
\qedhere
\]
\end{proof}

\medskip
\noindent
{\bf Fixed-state transition probabilities.}\quad
For a fixed trap $j$, the forward equations give the exact identity
\begin{equation}
\label{eq:hub-to-trap}
 p_{aj}(t)
 =
 r_j\int_0^t
 p_{aa}(s)e^{-\mu_j(t-s)}\,\mathrm ds.
\end{equation}
Equivalently, after the substitution $u=t-s$,
\[
p_{aj}(t) = r_j\int_0^t p_{aa}(t-u)e^{-\mu_ju}\,\mathrm du.
\]
Since $p_{aa}$ is regularly varying and the exponential kernel is
integrable, Theorem~\ref{thm:hub-diagonal} gives
\begin{equation}
\label{eq:hub-to-trap-asymptotic}
 p_{aj}(t)
 \sim
 \frac{r_j}{\mu_j}p_{aa}(t).
\end{equation}
If the process starts in trap $i$, it either remains there throughout
$[0,t]$, or returns to the hub at some time $s<t$. Hence
\begin{equation}
\label{eq:trap-to-state}
 p_{ij}(t)
 =
 \mathbf 1_{\{i=j\}}e^{-\mu_it}
 +
 \int_0^t
 \mu_i e^{-\mu_is}p_{aj}(t-s)\,\mathrm ds,
\end{equation}
and
\begin{equation}
\label{eq:trap-to-hub}
 p_{ia}(t)
 =
 \int_0^t
 \mu_i e^{-\mu_is}p_{aa}(t-s)\,\mathrm ds.
\end{equation}
Since
$\int_0^\infty\mu_i e^{-\mu_is}\,\mathrm ds=1$,
regular variation gives
$p_{ij}(t)\sim p_{aj}(t)$, $p_{ia}(t)\sim p_{aa}(t)$.
The exponentially small term in \eqref{eq:trap-to-state} is
negligible.

We have therefore proved the following.

\begin{theorem}[Rank-one fixed-state asymptotics]
\label{thm:hub-fixed-state}
For every fixed $i,j\in C$,
$p_{ij}(t) \sim A x_i m_jt^{-\kappa}$, $t\to\infty$,
where
\[
 A=\frac{c\Gamma(\kappa)}{d^2}, \quad x_i=1,\quad i\in C,
\]
and
\[
 m_a=1,
 \qquad
 m_j=\frac{r_j}{\mu_j}=cj^{1-\kappa},
 \quad j\geq1.
\]
\end{theorem}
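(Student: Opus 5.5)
The theorem collects the asymptotics just derived, so the plan is to run through the four cases $(i,j)\in\{a\}\times\{a\}$, $\{a\}\times\mathbb N$, $\mathbb N\times\{a\}$, $\mathbb N\times\mathbb N$, each time reducing to Theorem~\ref{thm:hub-diagonal}. The single analytic tool is the following convolution principle, already used in the proofs of Lemma~\ref{lem:hub-return-tail} and Theorem~\ref{thm:hub-diagonal}. If $\phi$ is regularly varying with index $-\kappa$ and bounded on compacts, and $\psi\geq0$ is integrable with an exponentially decaying tail, then
\[
\int_0^t\psi(u)\phi(t-u)\,\mathrm du\sim\Bigl(\int_0^\infty\psi\Bigr)\phi(t).
\]
To prove it, split at $A$ and $t/2$. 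On $[0,A]$ use uniform convergence, on $[A,t/2]$ use the Potter bound $\phi(t-u)\leq C\phi(t)$, and on $[t/2,t]$ use the bound $\sup\phi\cdot\int_{t/2}^\infty\psi$, which is exponentially small and hence $o(\phi(t))$. Let $t\to\infty$ and then $A\to\infty$.

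\emph{Hub row.} For $(a,a)$, Theorem~\ref{thm:hub-diagonal} gives the claim directly with $x_a=m_a=1$. For $(a,j)$ with $j\geq1$, apply the principle to \eqref{eq:hub-to-trap} in the form $p_{aj}(t)=r_j\int_0^t e^{-\mu_ju}p_{aa}(t-u)\,\mathrm du$, taking $\psi(u)=e^{-\mu_ju}$ and $\phi=p_{aa}$. Here $p_{aa}\leq1$, and by Theorem~\ref{thm:hub-diagonal} it is regularly varying, since it is asymptotic to $At^{-\kappa}$. The integral of $\psi$ is $1/\mu_j$, which yields \eqref{eq:hub-to-trap-asymptotic}, so $p_{aj}(t)\sim (r_j/\mu_j)At^{-\kappa}$ and $m_j=r_j/\mu_j=cj^{-\kappa}\cdot j=cj^{1-\kappa}$.

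\emph{Trap rows.} For $i\geq1$, use \eqref{eq:trap-to-hub} and \eqref{eq:trap-to-state} with the probability density $\psi(s)=\mu_ie^{-\mu_is}$, whose integral is $1$. The principle applied with $\phi=p_{aa}$, and with $\phi=p_{aj}$ (regularly varying by the previous step), gives $p_{ia}\sim p_{aa}$ and $p_{ij}\sim p_{aj}$. The extra term $\mathbf 1_{\{i=j\}}e^{-\mu_it}$ is exponentially small against $t^{-\kappa}$. Hence $x_i=1$ for every $i$. Before applying the forward equation \eqref{eq:hub-to-trap}, we should check it is exact. It is: from the hub the only way to reach trap $j$ is a jump at rate $r_j$ followed by an exponential sojourn, and the $q$-matrix is uniformly bounded, so the minimal process is the process and the forward equations hold.

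The main obstacle is only the justification of the convolution principle for general regularly varying $\phi$, rather than for the explicit $M$ treated in Lemma~\ref{lem:hub-return-tail}. Since $\psi$ is exponentially decaying and $\phi$ is bounded by $1$, the proof of that lemma transfers verbatim. Potter bounds need $\phi$ eventually positive, and this follows from the asymptotic equivalence itself.
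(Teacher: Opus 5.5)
Your proposal is correct and follows the paper's argument essentially step for step. The paper handles the four cases the same way: Theorem~\ref{thm:hub-diagonal} gives the diagonal entry, the forward-equation convolution \eqref{eq:hub-to-trap} gives the hub row, and the first-jump decompositions \eqref{eq:trap-to-state} and \eqref{eq:trap-to-hub} give the trap rows. The only difference is that you state and prove the convolution principle explicitly (integrable kernel with exponential tail against a regularly varying function), whereas the paper simply appeals to the argument from the proof of Lemma~\ref{lem:hub-return-tail}.
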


The right vector $x$ and left vector $m$ are $0$-subinvariant. Indeed,
for the restriction $Q_C$ of the generator to $C$,
$(Q_Cx)_a=-d$, $(Q_Cx)_n=0$, $n\geq1$,
and $(mQ_C)_a=-d$, $(mQ_C)_n=0$, $n\geq1$.
Thus the only strict loss occurs at the killed hub.

Notice that $\sum_{j\geq1}m_j = c\sum_{j\geq1}j^{1-\kappa}$.
Thus $m$ is summable precisely when $\kappa>2$. Even in that case,
however, summability of the limiting left factor does not by itself
justify termwise summation of the fixed-state asymptotics. Such an
interchange would require a summable dominating family that is uniform
in $t$, and no such domination holds here. Indeed, termwise summation
would give order $t^{-\kappa}$, whereas the survival probability is of
order $t^{-(\kappa-1)}$. The missing mass is carried by traps whose
indices grow with $t$.

To determine the survival asymptotic, we shall need the following
convolution estimate, in which the heavier tail of the trap mixture
dominates the fixed-state return probability.

\begin{lemma}[Convolution with the heavier tail]
\label{lem:survival-convolution}
Let $v$ and $w$ be nonnegative, locally bounded functions on
$[0,\infty)$. Suppose that $v$ is integrable and
$v(t)=O(t^{-\kappa})$, $t\to\infty$,
while
$w(t)\sim Ct^{-(\kappa-1)}$, $t\to\infty$,
for some $C>0$ and $\kappa>1$. Suppose also that $w$ is eventually
decreasing. Then
\[
\int_0^t v(s)w(t-s)\,\mathrm ds \sim w(t)\int_0^\infty v(s)\,\mathrm ds.
\]
\end{lemma}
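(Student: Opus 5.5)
The plan is to split the convolution at a fixed cutoff $A$ and at $t/2$, mirroring the proof of Lemma~\ref{lem:hub-return-tail}, but now exploiting the gap between the tails of $v$ and $w$. Write
\[
 \int_0^t v(s)w(t-s)\,\mathrm ds = J_1+J_2+J_3,
\]
with $J_1$ over $[0,A]$, $J_2$ over $[A,t/2]$ and $J_3$ over $[t/2,t]$. The aim is to show that $J_1\sim w(t)\int_0^A v$ for fixed $A$, that $J_2\le C'w(t)\int_A^\infty v$, and that $J_3=o(w(t))$. Letting $t\to\infty$ and then $A\to\infty$ then gives the result, exactly as in the earlier lemmas.

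For $J_1$, note that $w(t)\sim Ct^{-(\kappa-1)}$ implies $w(t-s)/w(t)\to1$ uniformly on $0\le s\le A$. This is elementary here, since $(t-s)/t\to1$ uniformly and $w$ is asymptotic to a pure power, so no regular-variation theorem is even needed. Hence $J_1\sim w(t)\int_0^A v(s)\,\mathrm ds$.

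For $J_2$, the relevant range is $s\le t/2$, where $t-s\ge t/2$. For large $t$ we have $w(t-s)\le 2C(t/2)^{-(\kappa-1)}\le C''w(t)$. This gives $J_2\le C''w(t)\int_A^\infty v$, which is small after $A\to\infty$ because $v$ is integrable.

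The term $J_3$ is the main obstacle, since there $w$ is evaluated near the origin and is not comparable to $w(t)$. Here I would use the bound on $v$. For $s\in[t/2,t]$ we have $v(s)\le C_v t^{-\kappa}$ for large $t$, so
\[
 J_3\le C_v 2^{\kappa}t^{-\kappa}\int_0^{t/2}w(u)\,\mathrm du .
\]
Local boundedness of $w$ on $[0,T_0]$, together with the power asymptotic beyond $T_0$, gives $\int_0^{t/2}w=O(1)+O(t^{2-\kappa})$ when $\kappa<2$, $O(\log t)$ when $\kappa=2$, and $O(1)$ when $\kappa>2$. In every case $t^{-\kappa}\int_0^{t/2}w=o(t^{-(\kappa-1)})$, since $t^{-\kappa}\cdot t^{2-\kappa}=t^{2-2\kappa}=o(t^{1-\kappa})$ for $\kappa>1$. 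Hence $J_3=o(w(t))$. The eventual monotonicity of $w$ is not strictly needed in this route; at most it could be used to simplify the bound in $J_2$, via $w(t-s)\le w(t/2)$ for $t$ large. Combining the three estimates, the $\liminf$ and $\limsup$ of the ratio to $w(t)$ are squeezed between $\int_0^A v$ and $\int_0^A v+C''\int_A^\infty v$, and letting $A\to\infty$ yields the claim.
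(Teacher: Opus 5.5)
Your proposal is correct and follows essentially the paper's argument: the same fixed cutoff $A$ and split at $t/2$, with the paper dividing your $J_3$ into $[t/2,t-A]$ and $[t-A,t]$ (the latter bounded via $\sup_{[0,A]}w$) and citing Potter bounds and the uniform convergence theorem where you use the elementary pure-power comparison. As you observe, the eventual monotonicity of $w$ is not used in either argument.
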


\begin{proof}
Fix $A>0$ and divide the convolution:
\[
\int_0^t v(s)w(t-s)\,\mathrm ds = I_1(t,A)+I_2(t,A)+I_3(t,A),
\]
where
\[
I_1(t,A)=\int_0^A v(s)w(t-s)\,\mathrm ds, \ 
I_2(t,A)=\int_A^{t-A}v(s)w(t-s)\,\mathrm ds, \ 
I_3(t,A)=\int_{t-A}^t v(s)w(t-s)\,\mathrm ds.
\]
For fixed $A$, regular variation gives
\[
 \sup_{0\leq s\leq A} \left| \frac{w(t-s)}{w(t)}-1 \right| \to 0.
\]
Consequently,
$I_1(t,A) \sim w(t)\int_0^A v(s)\,\mathrm ds$.

We next control the final interval. Since $w$ is locally bounded, put
\[
 C_A=\sup_{0\leq u\leq A}w(u)<\infty.
\]
Then
$I_3(t,A) \leq C_A\int_{t-A}^t v(s)\,\mathrm ds$.
Since $v(s)=O(s^{-\kappa})$,
$I_3(t,A)=O(t^{-\kappa})$.
Because
$w(t)\sim Ct^{-(\kappa-1)}$,
it follows that
$I_3(t,A)=o(w(t))$.

For the middle interval, split it further at $t/2$. On
$[A,t/2]$, the uniform convergence theorem and Potter bounds give a
constant $C_1$ such that
$w(t-s)\leq C_1w(t)$, $A\leq s\leq t/2$,
for all sufficiently large $t$. Therefore
\[
\int_A^{t/2}v(s)w(t-s)\,\mathrm ds \leq C_1w(t)\int_A^\infty v(s)\,\mathrm ds.
\]
On $[t/2,t-A]$, put $u=t-s$. Since $v(s)=O(t^{-\kappa})$ uniformly
for $s\geq t/2$,
\[
\int_{t/2}^{t-A}v(s)w(t-s)\,\mathrm ds \leq C_2t^{-\kappa}\int_A^{t/2}w(u)\,\mathrm du.
\]
If $1<\kappa<2$, Karamata's theorem gives
$\int_A^{t/2}w(u)\,\mathrm du=O(t^{2-\kappa})$,
and hence this term is
$O(t^{2-2\kappa})=o(t^{1-\kappa})=o(w(t))$.
If $\kappa=2$, the integral is $O(\log t)$, and the same conclusion
holds. If $\kappa>2$, $w$ is integrable, and the term is
$O(t^{-\kappa})=o(w(t))$.

It follows that
\[
 \limsup_{t\to\infty}
 \frac{I_2(t,A)+I_3(t,A)}{w(t)}
 \leq
 C_1\int_A^\infty v(s)\,\mathrm ds.
\]
Letting $A\to\infty$ and using the integrability of $v$ gives
\[
\int_0^t v(s)w(t-s)\,\mathrm ds \sim w(t)\int_0^\infty v(s)\,\mathrm ds.
\qedhere
\]
\end{proof}

\medskip
\noindent
{\bf Survival asymptotics.}\quad
Let
$S_i(t)=\mathbb P_i(T_0>t) =\sum_{j\in C}p_{ij}(t)$.
Summing the exact entrance representation over the traps gives
\[
\sum_{j\geq1}p_{ij}(t) = \mathbf 1_{\{i\geq1\}}e^{-\mu_it} + \int_0^t p_{ia}(s)B(t-s)\,\mathrm ds,
\]
where $B(t)=\sum_{n\geq1}r_ne^{-\mu_nt}$.
Therefore
\begin{equation}
\label{eq:survival-decomposition}
 S_i(t)
 =
 p_{ia}(t)
 +
 \mathbf 1_{\{i\geq1\}}e^{-\mu_it}
 +
 \int_0^t p_{ia}(s)B(t-s)\,\mathrm ds.
\end{equation}

Since killing occurs at rate $d$ while the chain is at the hub, the
density of the absorption time is
$d\,p_{ia}(t)$.
Absorption occurs almost surely, so
\begin{equation}
\label{eq:hub-occupation}
 d\int_0^\infty p_{ia}(s)\,\mathrm ds=1.
\end{equation}
Thus $\int_0^\infty p_{ia}(s)\,\mathrm ds=1/d$. By
Theorem~\ref{thm:hub-fixed-state}, $p_{ia}(t)=O(t^{-\kappa})$. Moreover,
$p_{ia}$ is integrable by~\eqref{eq:hub-occupation}. The function $B$ is
decreasing, because it is a positive mixture of decreasing exponential
functions, and Corollary~\ref{cor:hub-mixtures} gives $B(t)\sim
c\Gamma(\kappa-1)t^{-(\kappa-1)}$. Lemma~\ref{lem:survival-convolution},
applied with $v=p_{ia}$, $w=B$, therefore gives
\[
\int_0^t p_{ia}(s)B(t-s)\,\mathrm ds \sim B(t)\int_0^\infty p_{ia}(s)\,\mathrm ds.
\]
Hence,
\[
 \int_0^t p_{ia}(s)B(t-s)\,\mathrm ds \sim \frac{B(t)}d.
\]
The other two terms in \eqref{eq:survival-decomposition} are
negligible relative to $B(t)$: the initial holding term is exponentially
small, while
$p_{ia}(t)=O(t^{-\kappa}) =o(t^{-(\kappa-1)})$.

We have proved the following.

\begin{theorem}[Survival asymptotic]
\label{thm:hub-survival}
For every fixed $i\in C$,
\[
 \mathbb P_i(T_0>t) \sim \frac{c\Gamma(\kappa-1)}{d} t^{-(\kappa-1)}.
\]
Consequently,
$\kappa_0=\kappa-1<\kappa$.
\end{theorem}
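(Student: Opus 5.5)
The plan is to start from the exact decomposition \eqref{eq:survival-decomposition}, $S_i(t)=p_{ia}(t)+\mathbf 1_{\{i\geq1\}}e^{-\mu_it}+\int_0^t p_{ia}(s)B(t-s)\,\mathrm ds$. I will derive it by summing \eqref{eq:trap-to-state} over $j\geq1$ and combining with \eqref{eq:hub-to-trap}. The sum over $j$ of $r_je^{-\mu_j(t-s)}$ gives $B$. Fubini/Tonelli is justified because all terms are nonnegative and $\sum_j r_j<\infty$. For $i\geq1$ this produces $\int_0^t\mu_ie^{-\mu_is}\int_0^{t-s}p_{aa}(u)B(t-s-u)\,\mathrm du\,\mathrm ds$, which collapses to $\int_0^t p_{ia}(s)B(t-s)\,\mathrm ds$ via \eqref{eq:trap-to-hub} and associativity of convolution. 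I will check this identity carefully.

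Next I will identify the mass of $v=p_{ia}$. Killing happens only at the hub, at rate $d$, so $\Pp_i(T_0\in\mathrm dt)=d\,p_{ia}(t)\,\mathrm dt$. Since absorption is certain, $\int_0^\infty p_{ia}=1/d$, as in \eqref{eq:hub-occupation}. From Theorem~\ref{thm:hub-fixed-state}, $p_{ia}(t)=O(t^{-\kappa})$. Corollary~\ref{cor:hub-mixtures} gives $B(t)\sim c\Gamma(\kappa-1)t^{-(\kappa-1)}$, and $B$ is decreasing as a positive mixture of exponentials. Local boundedness holds trivially: $p_{ia}\leq1$ and $B\leq R$.

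Lemma~\ref{lem:survival-convolution}, applied with $v=p_{ia}$ and $w=B$, then gives that the convolution is asymptotic to $B(t)/d\sim (c\Gamma(\kappa-1)/d)\,t^{-(\kappa-1)}$. The remaining terms are negligible: $e^{-\mu_it}$ decays exponentially, and $p_{ia}(t)=O(t^{-\kappa})=o(t^{-(\kappa-1)})$. This yields the asymptotic. Since fixed-state probabilities decay like $t^{-\kappa}$ at the same exponential rate $\lambda=0$, we have $\kappa_0=\kappa-1$.

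The main obstacle is the convolution step, where the heavier tail $B$ must dominate. That work is already packaged in Lemma~\ref{lem:survival-convolution}; in particular, its case split on $\kappa\lessgtr2$ controls the region where $s$ is near $t$. What remains delicate for me is the exactness of the summed decomposition, namely that interchanging the sum over the infinitely many traps with the time integrals loses no mass. Nonnegativity (Tonelli) handles this, and I will double-check it by verifying $\int_0^\infty S_i(t)\,\mathrm dt$ against the mean absorption time.
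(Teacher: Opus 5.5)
Your proposal is correct and follows the paper's proof essentially step for step. Both use the decomposition \eqref{eq:survival-decomposition}, the identity $\int_0^\infty p_{ia}(s)\,\mathrm ds=1/d$ (killing occurs only at the hub and absorption is certain), and Lemma~\ref{lem:survival-convolution} with $v=p_{ia}$ and $w=B$. Your explicit Tonelli derivation of the summed decomposition is a useful detail the paper leaves implicit. The proposed cross-check against the mean absorption time tells you something only when $\kappa>2$, since for $\kappa\leq2$ both sides equal $\mathbb E_iT_0=\infty$.
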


Theorem~\ref{thm:hub-survival} is valid for every $\kappa>1$.
The restriction $1<\kappa<2$ is therefore not needed for the survival
asymptotic itself.

The relation $\kappa_0=\kappa-1$ has a direct probabilistic
interpretation.
A fixed-state transition probability requires a long trap visit to end
near time $t$, producing the density mixture
$\sum_n r_n\mu_n e^{-\mu_n t}$. Survival requires only that the long
trap visit remain in progress at time $t$, producing the tail mixture
$\sum_n r_n e^{-\mu_n t}$.

\medskip
\noindent
{\bf A scaled conditional limit law.}\quad
To describe the escape of conditional mass, define a rescaled variable
$Z_t$ by
\[
 Z_t=
 \begin{cases}
  X(t)/t, & X(t)\in\{1,2,\ldots\},\\
  0,      & X(t)=a.
 \end{cases}
\]
The value assigned at the hub is immaterial, because the conditional
probability of being at the hub tends to $0$.

\begin{theorem}[Scaled conditional limit]
\label{thm:hub-conditional-limit}
For every fixed initial state $i\in C$,
\[
\mathcal L_i(Z_t\mid T_0>t) \Rightarrow \mathcal L(Y),
\]
where $Y$ has density
\[
 f_Y(y) = \frac{y^{-\kappa}e^{-1/y}} {\Gamma(\kappa-1)}, \quad y>0,
\]
that is $Y^{-1}\sim\operatorname{Gamma}(\kappa-1,1)$.
\end{theorem}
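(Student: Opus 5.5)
We will compute, for each fixed $y>0$, the conditional tail $\Pp_i(Z_t>y\mid T_0>t)$ and show that it converges to $\int_y^\infty f_Y(v)\,\mathrm dv$. Since the hub contributes $p_{ia}(t)/S_i(t)=O(t^{-\kappa})/t^{-(\kappa-1)}\to0$ by Theorems~\ref{thm:hub-fixed-state} and~\ref{thm:hub-survival}, the value assigned to $Z_t$ at the hub is irrelevant. It therefore suffices to treat the trap mass. Rather than work with tails, we will prove convergence of $\mathbb E_i[\phi(Z_t)\mathbf 1\{X(t)\geq1\}]/S_i(t)$ for every bounded continuous $\phi$ on $[0,\infty)$. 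This is equivalent to weak convergence, since the limit has total mass $1$ by Lemma~\ref{lem:mixture-sum}.

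The starting point is the exact entrance representation already used for \eqref{eq:survival-decomposition}, refined by recording which trap is occupied. Using \eqref{eq:hub-to-trap} and \eqref{eq:trap-to-state},
\[
\mathbb E_i[\phi(Z_t);X(t)\geq1]
=\mathbf 1_{\{i\geq1\}}e^{-\mu_it}\phi(i/t)
+\int_0^t p_{ia}(s)B_\phi(t,t-s)\,\mathrm ds,
\]
where
\[
B_\phi(t,u)=\sum_{n\geq1}r_ne^{-\mu_nu}\phi(n/t).
\]
The first term is exponentially small. For the integral, we mimic the proof of Lemma~\ref{lem:survival-convolution}.

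Fix $A$ and split at $A$, $t/2$ and $t-A$.

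\emph{Range $s\in[0,A]$.} Here $u=t-s$ and
\[
B_\phi(t,u)=c\sum_n n^{-\kappa}e^{-u/n}\phi(n/t).
\]
We compare it with $B_\phi(t,t)$ uniformly in $s\le A$, using the bound $|e^{-(t-s)/n}-e^{-t/n}|\le e^{-t/n}(e^{A/n}-1)$. Terms with $n\le\sqrt t$ are super-polynomially small. For $n>\sqrt t$ the factor $e^{A/n}-1\to0$ uniformly, so the difference is $o(B(t))$. Lemma~\ref{lem:weighted-mixture} with $\alpha=\kappa$ then gives
\[
t^{\kappa-1}B_\phi(t,t)\to c\int_0^\infty y^{-\kappa}e^{-1/y}\phi(y)\,\mathrm dy.
\]
Hence the contribution of this range is asymptotic to
\[
t^{1-\kappa}c\int y^{-\kappa}e^{-1/y}\phi\cdot\int_0^A p_{ia}.
\]

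\emph{Remaining ranges.} Since $|B_\phi|\le\|\phi\|_\infty B$, the ranges $[A,t/2]$, $[t/2,t-A]$ and $[t-A,t]$ are bounded exactly as in Lemma~\ref{lem:survival-convolution}. Together they give at most $C_1\|\phi\|_\infty B(t)\int_A^\infty p_{ia}+o(B(t))$.

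Letting $A\to\infty$ and using \eqref{eq:hub-occupation} yields
\[
\mathbb E_i[\phi(Z_t);X(t)\geq1]\sim\frac{c\,t^{1-\kappa}}{d}\int_0^\infty y^{-\kappa}e^{-1/y}\phi(y)\,\mathrm dy.
\]
Dividing by the survival asymptotic of Theorem~\ref{thm:hub-survival} gives the limit $\int\phi f_Y$. Finally, the substitution $z=1/y$ turns $f_Y$ into the $\operatorname{Gamma}(\kappa-1,1)$ density for $Y^{-1}$.

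The main obstacle is the uniform replacement of $B_\phi(t,t-s)$ by $B_\phi(t,t)$ for $s\le A$. Regular variation cannot be invoked directly here because $\phi(n/t)$ carries its own $t$-dependence. The fix is the split at $n\approx\sqrt t$: small-index traps contribute super-polynomially small terms, and for large indices the exponential perturbation is uniformly negligible.
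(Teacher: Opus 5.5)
Your proof is correct and follows essentially the paper's route: the same exact entrance representation with $p_{ia}$ and trap weights $\phi(n/t)$, the same splitting of the time integral at $A$ and $t/2$, the same domination by $\|\phi\|_\infty B$ away from the origin, and the same passage $A\to\infty$ using $\int_0^\infty p_{ia}(s)\,\mathrm ds=1/d$. The only difference is on $[0,A]$. There the paper proves the weighted Riemann-sum convergence uniformly in $s$, whereas you reduce to $s=0$ and then cite the tail bounds of Lemma~\ref{lem:survival-convolution} rather than repeating them. Your ``main obstacle'' is in fact milder than you suggest: since $0\le e^{-(t-s)/n}-e^{-t/n}$ termwise, one gets directly $|B_\phi(t,t-s)-B_\phi(t,t)|\le\|\phi\|_\infty\{B(t-A)-B(t)\}=o(B(t))$ from regular variation of $B$, with no split at $\sqrt t$ needed.
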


\begin{proof}
Regard $Z_t$ as a random variable taking values in $[0,\infty)$, and
let $f\in C_b([0,\infty))$. The contribution from the hub is
$f(0)p_{ia}(t)=O(t^{-\kappa}) =o\bigl(t^{-(\kappa-1)}\bigr)$.
If the initial state is a trap $i\geq1$, there is also a contribution
$e^{-\mu_i t}f(i/t)$
from the event that the initial holding time has not ended by time
$t$. Since $i$ and $\mu_i$ are fixed, this contribution is
exponentially small and hence is also
$o\bigl(t^{-(\kappa-1)}\bigr)$.
For the contribution from the traps, the exact entrance representation
gives
\[
 \mathbb E_i\left[
 f(Z_t);\,X(t)\in\{1,2,\ldots\}
 \right]
 =
 \int_0^t p_{ia}(s)
 \sum_{n\geq1}
 r_ne^{-\mu_n(t-s)}
 f\left(\frac{n}{t}\right)
 \,\mathrm ds
 +
 o\left(t^{-(\kappa-1)}\right).
\]
Put
\[
 W_t(s)
 =
 t^{\kappa-1}
 \sum_{n\geq1}
 r_ne^{-\mu_n(t-s)}
 f\left(\frac{n}{t}\right),
 \qquad 0\leq s\leq t.
\]
For every fixed $A>0$, the weighted Riemann-sum argument used in
the proof of
Lemma~\ref{lem:weighted-mixture} is uniform for $0\leq s\leq A$.
Indeed,
\[
  e^{-\mu_n(t-s)}
  =
  \exp\left\{-\frac{1-s/t}{n/t}\right\},
\]
and $1-s/t\to1$ uniformly for $0\leq s\leq A$. On every compact
interval $[\varepsilon,M]\subset(0,\infty)$, the corresponding
integrands therefore converge uniformly in both $s$ and the
Riemann-sum variable. Moreover, for all sufficiently large $t$,
$\frac12\leq 1-{s}/{t}\leq 1$, $0\leq s\leq A$.
The upper-tail estimate from Lemma~\ref{lem:weighted-mixture} is
therefore uniform in $s$, while the lower tail is bounded uniformly
by a constant multiple of
$y^{-\kappa}e^{-1/(2y)}$.
Since this function is integrable near $0$, the lower-tail estimate
is uniform as well. Consequently,
\[
\sup_{0\leq s\leq A} \left| W_t(s) - c\int_0^\infty
y^{-\kappa}e^{-1/y}f(y)\,\mathrm dy \right| \to 0.
\]
It follows that
\[
 \int_0^A p_{ia}(s)W_t(s)\,\mathrm ds
 \to
 c\left(\int_0^A p_{ia}(s)\,\mathrm ds\right)
 \int_0^\infty
 y^{-\kappa}e^{-1/y}f(y)\,\mathrm dy.
\]
For $0\leq s\leq t/2$, put $u=t-s$, so that
$t/2\leq u\leq t$. By boundedness of $f$,
$|W_t(s)| \leq \|f\|_\infty t^{\kappa-1}B(u)$.
Since
$B(u)\sim c\Gamma(\kappa-1)u^{-(\kappa-1)}$
and $t/u\leq2$ on this interval, there is a constant $C_f<\infty$
such that
\[
  \sup_{0\leq s\leq t/2}|W_t(s)|\leq C_f
\]
for all sufficiently large $t$. Hence
\[
\left| \int_A^{t/2}p_{ia}(s)W_t(s)\,\mathrm ds \right| \leq C_f\int_A^\infty p_{ia}(s)\,\mathrm ds,
\]
which can be made arbitrarily small by taking $A$ large.

It remains to consider $t/2<s<t$. By
Theorem~\ref{thm:hub-fixed-state}, $p_{ia}$ is regularly varying with
index $-\kappa$. Hence the uniform convergence theorem, or Potter
bounds, gives
\[
  \sup_{t/2\leq s\leq t}p_{ia}(s)=O(t^{-\kappa}).
\]
Using
$\sum_{n\geq1}r_ne^{-\mu_n(t-s)}=B(t-s)$
and then making the substitution $u=t-s$, we obtain
\[
t^{\kappa-1}
 \int_{t/2}^t p_{ia}(s)
 \sum_{n\geq1}r_ne^{-\mu_n(t-s)}
 \left|f\left(\frac{n}{t}\right)\right|
 \,\mathrm ds
\leq
 C\|f\|_\infty t^{-1}
 \int_0^{t/2}B(u)\,\mathrm du.
\]
Since
$B(u)\sim c\Gamma(\kappa-1)u^{-(\kappa-1)}$
as $u\to\infty$, the right-hand side tends to $0$ for every
$\kappa>1$.
Indeed, it is respectively of order
\[
 t^{1-\kappa},\quad \frac{\log t}{t},\quad \frac1t,
\]
according as $1<\kappa<2$, $\kappa=2$, or $\kappa>2$.
Combining the three time intervals, then letting first $t\to\infty$
and afterwards $A\to\infty$, and using
$\int_0^\infty p_{ia}(s)\,\mathrm ds=1/d$, we obtain
\[
 t^{\kappa-1} \mathbb E_i\left[ f(Z_t);\,T_0>t \right] \to \frac{c}{d} \int_0^\infty y^{-\kappa}e^{-1/y}f(y)\,\mathrm dy.
\]
By Theorem~\ref{thm:hub-survival},
\[
 t^{\kappa-1}\mathbb P_i(T_0>t) \to \frac{c\Gamma(\kappa-1)}d.
\]
Dividing the two limits yields
\[
 \mathbb E_i[f(Z_t)\mid T_0>t] \to \frac{1}{\Gamma(\kappa-1)} \int_0^\infty y^{-\kappa}e^{-1/y}f(y)\,\mathrm dy.
\]
Since convergence holds for every
$f\in C_b([0,\infty))$, the asserted weak convergence is proved.
\end{proof}

\medskip
\noindent
{\bf Interpretation and relation to trap models.}\quad
The fixed-state and survival asymptotics differ because they probe
different parts of the trap mixture. For every fixed $i,j\in C$,
$p_{ij}(t)$ is of order $t^{-\kappa}$, whereas survival is of order
$t^{-(\kappa-1)}$ and is carried by traps whose indices are comparable
to $t$. Consequently, the fixed-state limits cannot be summed to obtain
the survival asymptotic.
Indeed, for every fixed $i,j\in C$,
\[
 \Pp_i(X(t)=j\mid T_0>t)
 =
 \frac{p_{ij}(t)}{\Pp_i(T_0>t)}
 \to 0.
\]
Thus there is no proper LCD on the original countable state space.
The scaled conditional limit makes this failure of tightness explicit.
Given survival up to time $t$, the process is typically in a trap with
index comparable to $t$, and $X(t)/t$ has a nondegenerate limiting
distribution.

\section{Karlin--McGregor theory and the spectral edge}
\label{sec:birth-death-spectral-edge}

The hub-and-traps model provides an explicit example in which the
renewal mechanism can be analysed completely and leads to common
polynomial decay with rank-one coefficients. We now turn to a different
approach based on spectral representations. For birth--death processes,
the Karlin--McGregor formula reduces the long-time problem to the
behaviour of a single spectral measure near its lower edge, making it
possible to derive common exponents and rank-one coefficients directly
from spectral-edge asymptotics.

In this section only, we allow a broader class than that covered by the
standing assumptions in the Introduction. The birth and death rates need
not determine a unique transition function, and absorption at the
boundary need not occur with probability $1$. We consider a transition
function satisfying both the backward and forward equations
and having the corresponding Karlin--McGregor representation described
below.

Consider a birth--death process on
$C=\{1,2,\ldots\}$,
killed if it reaches $0$.  Write
\[
 b_i=q_{i,i+1}>0,
 \qquad
 d_i=q_{i,i-1}>0,
 \qquad i\geq1,
\]
where $d_1=q_{10}$ is the killing rate at the boundary. Define the
usual potential coefficients by
\[
 \pi_1=1, \quad \pi_i= \frac{b_1b_2\cdots b_{i-1}} {d_2d_3\cdots d_i}, \quad i\geq2.
\]
Let $Q_1(x)=1$, and define the birth--death polynomials recursively by
\[
b_1Q_2(x)=b_1+d_1-x 
\quad\text{and}\quad
b_iQ_{i+1}(x) = (b_i+d_i-x)Q_i(x)-d_iQ_{i-1}(x), \quad i\geq2.
\]
For a transition function satisfying both the backward and forward
equations, the Karlin--McGregor representation takes the
form
\begin{equation}
\label{eq:KM}
 p_{ij}(t)
 =
 \pi_j\int_{[0,\infty)}
 e^{-xt}Q_i(x)Q_j(x)\,\psi(\mathrm dx),
 \qquad i,j\in C,
\end{equation}
where $\psi$ is the probability spectral measure corresponding to the
transition function under consideration. The birth--death recurrence
need not determine a unique solution of the associated Stieltjes moment
problem. In the indeterminate case, different spectral measures
correspond to different boundary behaviour at infinity and hence to
different transition functions satisfying the backward and forward
equations; see \cite[Sections~2, 3.1 and 4.1]{KijimaEtAl1997} and
\cite[Section~8.3]{Anderson1991}. For the Karlin--McGregor
representation and our normalization, see
\cite[Section~2, especially equations~(2.1), (2.2), (2.7)--(2.9)]
{KijimaEtAl1997},
\cite[Section~2, especially equations~(2.2), (2.8) and~(2.9)]
{VanDoorn1991}, and
\cite[Theorem~8.2.1]{Anderson1991}.
This is the index-shifted version of the normalization used in
\cite{KijimaEtAl1997,VanDoorn1991}: their transient states are
$\{0,1,\ldots\}$, and the absorbing state associated with positive
boundary killing is labelled $-1$.

Set $\lambda=\inf\operatorname{supp}(\psi)$.
The ``zero-interlacing theory'' also gives positivity of the birth--death
polynomials at the lower spectral endpoint; see
\cite[Sections~8.2--8.3]{Anderson1991} and
\cite[Section~2, especially equations~(2.11)--(2.15)]{VanDoorn1991}.
For $i\geq2$, let $z_{i,1}$ denote the smallest zero of $Q_i$, and let
$\xi_1$ be the limit of the smallest zeros as the polynomial degree
tends to infinity. Strict interlacing gives
$\xi_1<z_{i,1}$, $i\geq2$.
The natural orthogonality measure has lower support endpoint $\xi_1$.
In the indeterminate case, the natural measure is the orthogonality
measure for which the infimum of the support is maximal; see
\cite[Section~2]{VanDoorn2015}. Consequently, the spectral measure
$\psi$ in~\eqref{eq:KM} satisfies
\[
  0\leq\lambda
  =
  \inf\operatorname{supp}(\psi)
  \leq\xi_1
  <z_{i,1},
  \qquad i\geq2.
\]
Moreover, the boundary normalization gives
\[
  Q_i(0)
  =
  1+d_1\sum_{k=1}^{i-1}\frac{1}{b_k\pi_k}
  >0,
  \qquad i\geq2.
\]
Since $Q_i$ has no zero in $[0,z_{i,1})$, it follows that
$Q_i(\lambda)>0$, $i\geq2$.
The same conclusion is immediate for $i=1$, because $Q_1\equiv1$.
Thus $Q_i(\lambda)>0$, $i\in C$.

In~\eqref{eq:KM}, all transition probabilities are represented in terms
of a single scalar spectral measure~$\psi$, with the state dependence
entering through the product $Q_i(x)Q_j(x)$. This structure suggests
that regular variation of $\psi$ at its lower endpoint should produce a
common fixed-state exponent and rank-one coefficients. In this setting
the asymptotics are governed by a single scalar spectral measure, and
the next theorem shows that the resulting coefficient kernel is
necessarily rank one. The positivity established above excludes
cancellation of the leading edge contribution and ensures that the
resulting left and right factors are strictly positive.

\begin{theorem}[Spectral-edge theorem for birth--death processes]
\label{thm:bd-spectral-edge}
Let $p_{ij}(t)$ be the transition function of an irreducible
birth--death process on $C=\{1,2,\ldots\}$, killed if it reaches~$0$.
Suppose that $p_{ij}(t)$ satisfies both the backward and forward
equations, and that its rates, potential coefficients,
polynomials and spectral measure are normalized as above. In
particular, assume that
\begin{equation}
\label{eq:bd-KM-edge}
 p_{ij}(t)
 =
 \pi_j\int_{[\lambda,\infty)}
 e^{-xt}Q_i(x)Q_j(x)\,\psi(\mathrm dx),
 \qquad i,j\in C,
\end{equation}
where
$\lambda=\inf\operatorname{supp}(\psi)$.
Suppose that, for some $c>0$, $\kappa>0$, and a slowly varying
function $L$,
\begin{equation}
\label{eq:bd-edge-law}
 \psi([\lambda,\lambda+y])
 \sim
 c y^\kappa L(1/y),
 \qquad y\downarrow0.
\end{equation}
Then, for every fixed $i,j\in C$,
\begin{equation}
\label{eq:bd-transition-asymptotic}
 e^{\lambda t}p_{ij}(t)
 \sim
 c\Gamma(\kappa+1)
 Q_i(\lambda)\pi_jQ_j(\lambda)
 t^{-\kappa}L(t),
 \qquad t\to\infty.
\end{equation}
In particular, $\lambda$ is the common decay parameter,
and the limiting coefficients have the rank-one factorisation
$c_{ij}=A x_i m_j$,
where
$A=c\Gamma(\kappa+1)$, $x_i=Q_i(\lambda)$, $m_j=\pi_jQ_j(\lambda)$.
If, in addition,
$\int_1^\infty t^{-\kappa}L(t)\,\mathrm dt<\infty$,
then the class $C$ is $\lambda$-transient for the transition function 
under consideration.
This integrability condition holds whenever $\kappa>1$.
\end{theorem}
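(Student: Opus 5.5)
The plan is to shift the spectral variable to the lower edge and apply an Abelian theorem for Laplace--Stieltjes transforms. Substituting $x=\lambda+y$ in \eqref{eq:bd-KM-edge} gives
\[
 e^{\lambda t}p_{ij}(t)=\pi_j\int_{[0,\infty)}e^{-yt}\,\phi_{ij}(y)\,\nu(\mathrm dy),
\]
where $\nu$ is the image of $\psi$ under $x\mapsto x-\lambda$ and $\phi_{ij}(y)=Q_i(\lambda+y)Q_j(\lambda+y)$. Since $\psi$ is a probability measure, $\nu$ is finite. Its distribution function is $\nu([0,y])=\psi([\lambda,\lambda+y])\sim cy^\kappa L(1/y)$ as $y\downarrow0$.

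The first step is the scalar case $\phi\equiv1$. Karamata's Abelian theorem for Laplace--Stieltjes transforms (Bingham--Goldie--Teugels, Theorem~1.7.1$'$, the version at the origin/infinity) gives
\[
 \int e^{-yt}\,\nu(\mathrm dy)\sim c\,\Gamma(\kappa+1)\,t^{-\kappa}L(t),\qquad t\to\infty.
\]
Rather than quote this, I would check it directly by splitting the integral at $\delta$ and at $M/t$. Integration by parts writes the part over $[0,\delta]$ as $t\int_0^\delta e^{-yt}\nu([0,y])\,\mathrm dy$. Substituting $y=s/t$ and using uniform convergence and Potter bounds for $y^\kappa L(1/y)$ gives the limit $c\int_0^\infty e^{-s}s^\kappa\,\mathrm ds=c\Gamma(\kappa+1)$ after normalisation. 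The part over $(\delta,\infty)$ is $O(e^{-\delta t})$, which is negligible against any regularly varying function.

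The second step handles the polynomial weights. The function $\phi_{ij}$ is a polynomial, so it is continuous at $0$ with $\phi_{ij}(0)=Q_i(\lambda)Q_j(\lambda)>0$ by the positivity established before the theorem. Given $\eta>0$, choose $\delta$ so that $|\phi_{ij}(y)-\phi_{ij}(0)|\le\eta$ on $[0,\delta]$. The contribution from $[0,\delta]$ then lies between $(\phi_{ij}(0)\pm\eta)$ times the scalar integral over $[0,\delta]$, which by the first step is $\sim c\Gamma(\kappa+1)t^{-\kappa}L(t)$.

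The tail over $(\delta,\infty)$ is the main obstacle, because $\phi_{ij}$ grows polynomially and $\psi$ need not have finite moments. I would bound it by
\[
 e^{-(\delta/2)t}\int e^{-y t/2}|\phi_{ij}(y)|\,\nu(\mathrm dy)\quad\text{for }t\ge 2.
\]
To see that the last integral is finite at $t=2$, note that \eqref{eq:bd-KM-edge} at positive time converges absolutely. Indeed, $\int e^{-xt}Q_i(x)^2\,\psi(\mathrm dx)=p_{ii}(t)/\pi_i<\infty$, and Cauchy--Schwarz extends this to $|Q_iQ_j|$. The tail is therefore $O(e^{-\delta t/2})=o(t^{-\kappa}L(t))$. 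Letting $\eta\downarrow0$ yields \eqref{eq:bd-transition-asymptotic}.

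The remaining claims are consequences. The rank-one form $c_{ij}=Ax_im_j$ with $A=c\Gamma(\kappa+1)$, $x_i=Q_i(\lambda)$ and $m_j=\pi_jQ_j(\lambda)$ can be read off directly from \eqref{eq:bd-transition-asymptotic}. That $\lambda$ is the decay parameter follows because $t^{-1}\log(t^{-\kappa}L(t))\to0$. For $\lambda$-transience, $e^{\lambda t}p_{ij}(t)$ is locally bounded by $e^{\lambda t}$ and asymptotically a constant multiple of $t^{-\kappa}L(t)$. Hence $\int_0^\infty e^{\lambda t}p_{ij}(t)\,\mathrm dt<\infty$ whenever $\int_1^\infty t^{-\kappa}L(t)\,\mathrm dt<\infty$. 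This integrability holds for $\kappa>1$, since Potter's bound gives $L(t)\le Ct^{(\kappa-1)/2}$ eventually.
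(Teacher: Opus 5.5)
Your proposal is correct and follows the paper's proof essentially step for step. Both arguments shift to the edge, apply Karamata's Abelian theorem to the scalar transform, sandwich $Q_i(\lambda+y)Q_j(\lambda+y)$ by continuity near $0$, and control the polynomially weighted tail by Cauchy--Schwarz together with $\int e^{-xt}Q_i(x)^2\,\psi(\mathrm dx)=p_{ii}(t)/\pi_i$. The differences are minor: you optionally verify the Abelian step directly (your integration by parts also produces a harmless, exponentially small boundary term $e^{-\delta t}\nu([0,\delta])$), and for $\kappa>1$ you use $L(t)=o(t^{(\kappa-1)/2})$ where the paper uses Karamata's tail-integral theorem.
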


\begin{remark}[Checking the spectral-edge hypothesis]
Condition~\eqref{eq:bd-edge-law} holds, in particular, if $\psi$ has a
density $w$ satisfying
\[
  w(\lambda+y)
  \sim c\kappa y^{\kappa-1}L(1/y),
  \qquad y\downarrow0.
\]
Thus, when the Karlin--McGregor measure is explicit, the condition can
usually be checked by expanding its density at the lower spectral
endpoint.
\end{remark}

\begin{proof}
Define the shifted spectral measure $\widetilde\psi$ on
$[0,\infty)$ by
$\widetilde\psi(B)=\psi(\lambda+B)$.
Then \eqref{eq:bd-edge-law} becomes
\begin{equation}
\label{eq:bd-shifted-edge-law}
 \widetilde\psi([0,y])
 \sim
 c y^\kappa L(1/y),
 \qquad y\downarrow0.
\end{equation}
In particular, since $\kappa>0$,
$\widetilde\psi(\{0\})=0;$
there is no atomic contribution at the lower spectral endpoint.
Multiplying \eqref{eq:bd-KM-edge} by $e^{\lambda t}$ gives
\begin{equation}
\label{eq:bd-shifted-integral}
 e^{\lambda t}p_{ij}(t)
 =
 \pi_j\int_{[0,\infty)}
 e^{-ty}R_{ij}(y)\,\widetilde\psi(\mathrm dy),
\end{equation}
where
$R_{ij}(y)=Q_i(\lambda+y)Q_j(\lambda+y)$.
Since $Q_i$ and $Q_j$ are polynomials,
$R_{ij}(y)\to R_{ij}(0) =Q_i(\lambda)Q_j(\lambda)>0$ $(y\downarrow0)$.
Thus, by continuity, $R_{ij}$ is positive on some interval
$[0,\eta]$.

Let
$U(y)=\widetilde\psi([0,y])$, $y\geq 0$.
The function $U$ is non-decreasing and right-continuous. With
$c_0=c\Gamma(\kappa+1)$,
equation \eqref{eq:bd-shifted-edge-law} may be written as
\[
 U(y) \sim \frac{c_0}{\Gamma(\kappa+1)} y^\kappa L(1/y), 
\quad y\downarrow0.
\]
The Abelian part of Karamata's theorem for Laplace--Stieltjes transforms
at the origin
\cite[Theorem~1.7.1\ensuremath{^{\prime}}, pp.~37--38]{BinghamGoldieTeugels1987}
therefore gives
\begin{equation}
\label{eq:bd-karamata}
 \int_{[0,\infty)}e^{-ty}\,\widetilde\psi(\mathrm dy)
 \sim
 c\Gamma(\kappa+1)t^{-\kappa}L(t),
 \qquad t\to\infty.
\end{equation}
This explicit normalization accounts for the factor
$\Gamma(\kappa+1)$.

Fix $\varepsilon\in(0,1)$. By continuity of $R_{ij}$, there is
$\eta_\varepsilon>0$ such that
\[
 (1-\varepsilon)R_{ij}(0)
 \leq R_{ij}(y)
 \leq (1+\varepsilon)R_{ij}(0),
 \qquad 0\leq y\leq\eta_\varepsilon.
\]
Consequently,
\[
 (1-\varepsilon)R_{ij}(0)
 \int_{[0,\eta_\varepsilon]}e^{-ty}
 \,\widetilde\psi(\mathrm dy)
 \leq
 \int_{[0,\eta_\varepsilon]}e^{-ty}R_{ij}(y)
 \,\widetilde\psi(\mathrm dy)
 \leq
 (1+\varepsilon)R_{ij}(0)
 \int_{[0,\eta_\varepsilon]}e^{-ty}
 \,\widetilde\psi(\mathrm dy).
\]
For fixed $\eta_\varepsilon>0$, the part of the integral in
\eqref{eq:bd-karamata} over $[\eta_\varepsilon,\infty)$ is
exponentially small. Hence
\begin{equation}
\label{eq:bd-local-mass}
 \int_{[0,\eta_\varepsilon]}e^{-ty}
 \,\widetilde\psi(\mathrm dy)
 \sim
 c\Gamma(\kappa+1)t^{-\kappa}L(t).
\end{equation}

It remains to control the factor $R_{ij}$ away from the edge. Fix
$t_0>0$. For $t>t_0$ and $y\geq\eta_\varepsilon$,
$e^{-ty} \leq e^{-(t-t_0)\eta_\varepsilon}e^{-t_0y}$.
Therefore,
\[
 \left|
 \int_{[\eta_\varepsilon,\infty)}
 e^{-ty}R_{ij}(y)\,\widetilde\psi(\mathrm dy)
 \right|
 \leq
 e^{-(t-t_0)\eta_\varepsilon}
 \int_{[0,\infty)}
 e^{-t_0y}|R_{ij}(y)|\,\widetilde\psi(\mathrm dy).
\]
By Cauchy--Schwarz, the last integral is at most
\[
 \left(
 \int_{[0,\infty)}
 e^{-t_0y}Q_i(\lambda+y)^2\,\widetilde\psi(\mathrm dy)
 \right)^{1/2}
 \left(
 \int_{[0,\infty)}
 e^{-t_0y}Q_j(\lambda+y)^2\,\widetilde\psi(\mathrm dy)
 \right)^{1/2}.
\]
Both factors are finite. Indeed, the diagonal Karlin--McGregor
representation gives
\[
 \int_{[0,\infty)} e^{-t_0y}Q_i(\lambda+y)^2\,\widetilde\psi(\mathrm dy) = \frac{e^{\lambda t_0}p_{ii}(t_0)}{\pi_i} <\infty,
\]
and similarly with $i$ replaced by $j$. Since an exponentially
decreasing function is negligible relative to the regularly varying
function $t^{-\kappa}L(t)$, it follows that 
\begin{equation}
\label{eq:bd-away-edge}
 \int_{[\eta_\varepsilon,\infty)}
 e^{-ty}R_{ij}(y)\,\widetilde\psi(\mathrm dy)
 =o\bigl(t^{-\kappa}L(t)\bigr).
\end{equation}
Combining the preceding bounds with
\eqref{eq:bd-local-mass} and \eqref{eq:bd-away-edge}, and then letting
$\varepsilon\downarrow0$, yields
\[
\int_{[0,\infty)} e^{-ty}R_{ij}(y)\,\widetilde\psi(\mathrm dy) \sim c\Gamma(\kappa+1) Q_i(\lambda)Q_j(\lambda)t^{-\kappa}L(t).
\]
Substitution into \eqref{eq:bd-shifted-integral} proves
\eqref{eq:bd-transition-asymptotic} and the stated rank-one
factorisation.
Since $L$ is slowly varying, ${\log L(t)}/{\log t}\to 0$
by~\cite[Proposition~1.3.6(i), p.~16] {BinghamGoldieTeugels1987}. Hence
\[
 \frac{1}{t}\log\{t^{-\kappa}L(t)\}\to 0.
\]
Since the coefficient in \eqref{eq:bd-transition-asymptotic} is
strictly positive, that equation implies
\[
 \lim_{t\to\infty}\frac{1}{t}\log p_{ij}(t)=-\lambda.
\]
Thus $\lambda$ is the common decay parameter.

Finally, for every fixed $T<\infty$,
\[
\int_0^T e^{\lambda t}p_{ij}(t)\,\mathrm dt \leq \int_0^T e^{\lambda t}\,\mathrm dt <\infty.
\]
Under the additional integrability assumption,
\eqref{eq:bd-transition-asymptotic} implies that
$\int_T^\infty e^{\lambda t}p_{ij}(t)\,\mathrm dt<\infty$
for all sufficiently large $T$.
Hence the class $C$ is $\lambda$-transient for the transition function 
under consideration.

If $\kappa>1$, then $t^{-\kappa}L(t)$ is regularly varying with
index $-\kappa<-1$. Karamata's theorem for tail integrals
\cite[Proposition~1.5.10, p.~27]{BinghamGoldieTeugels1987}
gives
\[
\int_x^\infty t^{-\kappa}L(t)\,\mathrm dt 
\sim \frac{x^{1-\kappa}L(x)}{\kappa-1}, 
\quad x\to\infty.
\]
In particular, the required tail integral is finite.
\end{proof}

\begin{remark}
The theorem requires only $\kappa>0$ for the fixed-state asymptotic.
The stronger condition $\kappa>1$ is needed only for one to conclude
that $C$ is $\lambda$-transient. At the borderline $\kappa=1$,
integrability depends on the slowly varying factor $L$.
\end{remark}

\begin{remark}[Dependence on the spectral measure]
Theorem~\ref{thm:bd-spectral-edge} concerns the spectral measure of the
transition function under consideration and does not require the
associated Stieltjes moment problem to be determinate. Thus, in the
indeterminate case, its conclusions apply separately to each transition
function whose spectral measure satisfies~\eqref{eq:bd-edge-law}.
\end{remark}

\begin{remark}[The spectral-edge eigen-quantities]
\label{rem:bd-edge-eigenobjects}
Define $x_i=Q_i(\lambda)$, and $m_i=\pi_iQ_i(\lambda)$.
Both $x$ and $m$ are strictly positive. The birth--death recurrence
and the detailed-balance identities give, componentwise:
$Q_Cx=-\lambda x$ and $mQ_C=-\lambda m$.  Thus the factors in
$c_{ij}=c\Gamma(\kappa+1)x_i m_j$
are the right and left $\lambda$-eigen-quantities selected by the
spectral-edge normalization.

When $P_C(t)$ is the minimal $Q$-function,
Proposition~5.4.1 of \cite{Anderson1991} gives the corresponding
subinvariance relations
\[
  P_C(t)x\leq e^{-\lambda t}x,
  \qquad
  mP_C(t)\leq e^{-\lambda t}m.
\]
Full semigroup invariance does not follow from the generator
eigen-equations alone. For the right vector, equality
$P_C(t)x=e^{-\lambda t}x$
is equivalent to honesty of the minimal process obtained by the
corresponding Doob transform; see
\cite[Lemma~5.4.2 and Theorem~5.4.3]{Anderson1991}.
The limiting-zero argument above shows that the spectral-edge
eigenvector selected by the birth--death normalization is strictly
positive throughout $C$.
\end{remark}

\begin{example}[Spectral-edge eigen-quantities for the killed
$M/M/1$ queue]
\label{ex:mm1-edge-polynomials}
Return to the killed stable $M/M/1$ queue considered in
\eqref{eq:mm1-general}, with constant birth and death rates $b_i=p$ and
$d_i=q$, where $0<p<q$, and put $b=\sqrt{p/q}$.
Its lower spectral endpoint is
$\lambda=p+q-2\sqrt{pq}$.
At $x=\lambda$, the birth--death recurrence becomes
\[
pQ_{i+1}(\lambda) = 2\sqrt{pq}\,Q_i(\lambda)-qQ_{i-1}(\lambda), \qquad
i\geq2.
\]
The corresponding characteristic equation is
\[
pr^2-2\sqrt{pq}\,r+q=0,
\] 
which has the repeated root $r=b^{-1}$.
The boundary recurrence gives
$pQ_2(\lambda) = p+q-\lambda = 2\sqrt{pq}$,
and hence
$Q_2(\lambda)=2b^{-1}$.
Together with $Q_1(\lambda)=1$, this yields
$x_i:=Q_i(\lambda)=i b^{1-i}$, $i\geq1$.
In particular,
$Q_i(\lambda)>0$, $i\geq1$.
The potential coefficients are
$\pi_i = ({p}/{q})^{i-1} = b^{2(i-1)}$, $i\geq1$.
The corresponding left edge object is therefore
$m_i:=\pi_iQ_i(\lambda) = i b^{i-1}$.
The recurrence and detailed-balance identities give
$Q_Cx=-\lambda x$, $mQ_C=-\lambda m$.
In this example the associated Doob-transformed birth and death rates
are
\[
  p\frac{x_{i+1}}{x_i}
  =
  \sqrt{pq}\frac{i+1}{i},
  \qquad
  q\frac{x_{i-1}}{x_i}
  =
  \sqrt{pq}\frac{i-1}{i},
  \qquad i\geq2,
\]
with transformed birth rate $2\sqrt{pq}$ at state $1$. These rates are
bounded, so the transformed minimal process is honest. Consequently,
$P_C(t)x=e^{-\lambda t}x$.
By reversibility with respect to $\pi$, and since $m_i=\pi_i x_i$, it
also follows that
$mP_C(t)=e^{-\lambda t}m$. 
Thus, with the normalization used in \eqref{eq:mm1-general},
\[
x_i=ib^{1-i}, \qquad m_j=jb^{j-1}, \qquad x_i m_j=ijb^{j-i}.
\]
Hence the two factors identified in the renewal-section asymptotic are
precisely the right and left spectral-edge eigen-quantities.
\end{example}

The decisive feature of Theorem~\ref{thm:bd-spectral-edge} is the scalar
Karlin--McGregor representation. The next section examines what remains
true when several spectral components contribute at the lower edge.

\section{Multichannel spectral representations}\label{sec:multichannel}
The preceding section derives rank-one asymptotics from a scalar
Karlin--McGregor representation. We now ask what replaces that
conclusion when a reversible transition function has several spectral
components. We call each component, comprising a spectral measure and
its associated generalized eigenfunctions, a \emph{spectral channel}.
The analysis suggests that rank one is governed not by irreducibility
alone but by effective spectral multiplicity~$1$ at the dominant edge.

Let $P_C(t)$ be a transition function that is reversible with respect
to a strictly positive measure~$m$, meaning that
\[
  m_i p_{ij}(t)=m_j p_{ji}(t),
  \qquad i,j\in C,\quad t\geq 0.
\]
Equivalently, $P_C(t)$ is a symmetric operator on $\ell^2(m)$ for
every $t\geq 0$. Kendall~\cite{Kendall1959} showed that, for the minimal
$Q$-function, this reversibility follows from the detailed-balance
identities
\[
  m_iq_{ij}=m_jq_{ji},
  \qquad i,j\in C.
\]
Let $A$ denote the nonnegative self-adjoint generator of this symmetric
semigroup, so that $P_C(t)=e^{-tA}$. For the normalized coordinate vectors
\[
  e_i=\frac{\mathbf 1_{\{i\}}}{\sqrt{m_i}},
\]
we have
\[
  \langle e_i,P_C(t)e_j\rangle_{\ell^2(m)}
  =
  \sqrt{\frac{m_i}{m_j}}\,p_{ij}(t).
\]
If $E$ is the projection-valued spectral measure of $A$, define
\[
  \gamma_{ij}(B)
  =
  \langle e_i,E(B)e_j\rangle_{\ell^2(m)},
\qquad B\in \mathcal B([0,\infty)).
\]
Applying the spectral theorem for self-adjoint operators to
$P_C(t)=e^{-tA}$ gives
\begin{equation}
\label{eq:Kendall}
  \sqrt{\frac{m_i}{m_j}}\,p_{ij}(t)
  =
  \int_{[0,\infty)}e^{-xt}\gamma_{ij}(\dd x).
\end{equation}
The cross-spectral measures $\gamma_{ij}$ are finite signed measures,
while each diagonal measure $\gamma_{ii}$ is a probability measure.
These integral representations are associated with Kendall's
unitary-dilation and spectral approach
\cite{Kendall1958,Kendall1959}.

Anderson and McDunnough gave an alternative derivation of Kendall's
representation for the minimal $Q$-function using finite sections
and the backward and forward integral recursions. More importantly for
the present discussion, they gave sufficient conditions under which
Kendall's cross-spectral measures admit a product resolution.

To make this precise, let $E_0\subset E$ be a ``seed set''. For each
$\alpha\in E_0$, define the generalized eigenfunctions $Q_i^\alpha(x)$
by
\[
-xQ_i^\alpha(x) = \sum_{k\in E}q_{ik}Q_k^\alpha(x), \qquad i\in E,
\]
together with the boundary normalization
$Q_\beta^\alpha(x)=\delta_{\alpha\beta}$, $\alpha,\beta\in E_0$.
Suppose that every state has finitely many neighbours and that the
whole state space can be generated from $E_0$ by successive
continuation points in the sense of
\cite[Section~4]{AndersonMcDunnough1990}. These conditions ensure that
the generalized eigenfunctions are uniquely determined and that, for
each fixed state $i$, only finitely many $Q_i^\alpha$ are nonzero.

Writing
$\psi_\alpha=\gamma_{\alpha\alpha}$, $\alpha\in E_0$,
for the diagonal Kendall spectral measures, Anderson and McDunnough's
representation is
\begin{equation}
\label{eq:AM-multi}
 p_{ij}(t)
 =
 m_j\sum_{\alpha\in E_0}\frac{1}{m_\alpha}
 \int_0^\infty e^{-xt}
 Q_i^\alpha(x)Q_j^\alpha(x)\,
 \psi_\alpha(\mathrm dx).
\end{equation}
In~\eqref{eq:AM-multi}, each $\alpha\in E_0$ indexes a spectral
channel, consisting of the measure $\psi_\alpha$ together with the
corresponding generalized eigenfunctions $Q_i^\alpha$.
For every fixed $i,j$, the sum contains only finitely many nonzero
terms. Birth--death processes correspond to a singleton seed set.
Bilateral birth--death processes, with a doubly infinite
nearest-neighbour state space, require a two-point seed set, while the
competition-process examples use larger boundary seed sets; see
\cite[Section~4, especially Proposition~4.2]
{AndersonMcDunnough1990}, together with \cite{Iglehart1964,Pruitt1963}.

\medskip\noindent{\bf Cyclicity and spectral multiplicity.}\quad
Cyclicity provides the simplest route from the general spectral theorem to a scalar representation.
Fix a reference state $a\in C$. The normalized coordinate vector
$e_a$ is called cyclic for $A$ if
\[
  \overline{\operatorname{span}}
  \{f(A)e_a:f\text{ is bounded and Borel measurable}\}
  =
  \ell^2(m).
\]
Equivalently, the smallest closed reducing subspace for $A$ containing
$e_a$ is the whole of $\ell^2(m)$.
If $e_a$ is cyclic, the cyclic form of the spectral theorem identifies
$\ell^2(m)$ unitarily with the spectral space
\[
  L^2(\psi_a),
  \qquad
  \psi_a(B)=\langle e_a,E(B)e_a\rangle,
  \quad B\in\mathcal B([0,\infty)),
\]
under which $A$ becomes multiplication by $x$ and each normalized
coordinate vector $e_i$ becomes a function
$\phi_i\in L^2(\psi_a)$. Consequently,
\[
  \sqrt{\frac{m_i}{m_j}}\,p_{ij}(t)
  =
  \int_{[0,\infty)}e^{-xt}
  \phi_i(x)\phi_j(x)\,\psi_a(\dd x).
\]
Writing
$Q_i(x)={\phi_i(x)}/{\sqrt{m_i}}$
gives the scalar representation
\begin{equation}
\label{eq:scalar-full}
  p_{ij}(t)
  =
  m_j\int_{[0,\infty)}e^{-xt}
  Q_i(x)Q_j(x)\,\psi_a(\dd x).
\end{equation}

If every power of $A$ is defined at $e_a$ and
$\overline{\operatorname{span}} \{e_a,Ae_a,A^2e_a,\ldots\}$
equals the cyclic subspace generated by $e_a$, Gram--Schmidt
orthogonalization produces a basis in which $A$ has tridiagonal form.
Under the spectral representation, the corresponding basis functions
are orthogonal polynomials for $\psi_a$.

If $e_a$ is not cyclic, the scalar spectral measure $\psi_a$ describes
only the reducing subspace generated by $e_a$, rather than the whole
operator. For example, on two identical branches joined at the state
$a$, antisymmetric functions vanish at $a$ and are therefore not
detected by $e_a$.

If no single coordinate vector is cyclic, the spectral representation
need not be scalar. The general form of the spectral theorem represents
$A$ as multiplication by $x$ on a space of vector-valued functions.
More precisely, there are a measure $\psi$ and Hilbert spaces
$\mathcal K_x$, defined for $\psi$-almost every $x$, such that each
normalized coordinate vector $e_i$ is represented by a vector
$\phi_i(x)\in\mathcal K_x$ and
\[
  \sqrt{\frac{m_i}{m_j}}\,p_{ij}(t)
  =
  \int_{[0,\infty)}e^{-xt}
  \langle\phi_i(x),\phi_j(x)\rangle_{\mathcal K_x}
  \,\psi(\dd x).
\]
Writing
$\Phi_i(x)=\phi_i(x)/\sqrt{m_i}$ gives
\begin{equation}
\label{eq:fibre}
  p_{ij}(t)
  =
  m_j\int_{[0,\infty)}e^{-xt}
  \langle\Phi_i(x),\Phi_j(x)\rangle_{\mathcal K_x}
  \,\psi(\dd x).
\end{equation}
The dimension of $\mathcal K_x$ records the spectral multiplicity at
$x$. In the scalar case, $\dim\mathcal K_x=1$ almost everywhere; higher
dimensions allow several independent spectral components to contribute
at the same spectral value.

The Anderson--McDunnough representation
\eqref{eq:AM-multi} gives a concrete version of this vector-valued
picture. Its coordinates are indexed by the seed states
$\alpha\in E_0$, and, after choosing a common dominating measure and
absorbing the corresponding Radon--Nikodym densities into the
coordinates, they are proportional to
\[
  m_\alpha^{-1/2}Q_i^\alpha(x),
  \qquad \alpha\in E_0.
\]
Thus the seed-set representation expresses each transition probability
as a sum of scalar channel contributions, while
\eqref{eq:fibre} packages those contributions into a single
vector-valued inner product.

Because the spaces $\mathcal K_x$ and the representing vectors are
defined only for $\psi$-almost every $x$, an expression such as
$\Phi_i(\lambda)$ should be understood as limiting edge data obtained
from suitable representatives, when such a limit exists.

\medskip\noindent{\bf Consequences for polynomial decay and survival.}\quad 
Theorem~\ref{thm:bd-spectral-edge} shows that, in the
scalar Karlin--McGregor setting, regular variation at the lower endpoint
produces a common fixed-state exponent and rank-one coefficients. The
multichannel representations above suggest that rank one should instead
be understood as a multiplicity-$1$ phenomenon at the dominant spectral
edge. Fixed-state asymptotics do not determine survival without
additional uniformity in the terminal state. The following elementary
proposition identifies the tightness condition needed to justify
summation of the fixed-state limits.

\begin{proposition}[Tightness criterion for survival asymptotics]
\label{prop:tightness-survival}
Fix $i\in C$, and let $a(t)>0$ for all sufficiently large $t$.
Suppose that, for every fixed $j\in C$,
\begin{equation}
\label{eq:tightness-pointwise-limit}
 \frac{p_{ij}(t)}{a(t)}
 \to
 c_{ij},
 \qquad t\to\infty,
\end{equation}
where
$c_{ij}\geq 0$ and $\sum_{j\in C}c_{ij}<\infty$.
Suppose also that the normalized transition measures are
asymptotically tight, in the sense that
\begin{equation}
\label{eq:asymptotic-tightness}
 \inf_{\substack{F\subset C\\ F\text{ finite}}}
 \limsup_{t\to\infty}
 \sum_{j\in C\setminus F}
 \frac{p_{ij}(t)}{a(t)}
 =
 0.
\end{equation}
Then
\begin{equation}
\label{eq:survival-from-tightness}
 \frac{\mathbb P_i(T_0>t)}{a(t)}
 =
 \sum_{j\in C}\frac{p_{ij}(t)}{a(t)}
 \to
 \sum_{j\in C}c_{ij}.
\end{equation}
\end{proposition}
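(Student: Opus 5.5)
The plan is a standard $\varepsilon$--finite-set argument, splitting the sum over $C$ into a finite part $F$ and its complement. The first identity in \eqref{eq:survival-from-tightness} is immediate from honesty of $P$ and $\mathbb P_i(T_0>t)=1-p_{i0}(t)=\sum_{j\in C}p_{ij}(t)$, as recorded in the Introduction. So only the convergence needs proof. Write $c_i=\sum_{j\in C}c_{ij}<\infty$ and fix $\varepsilon>0$.

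First, by \eqref{eq:asymptotic-tightness} choose a finite $F_1\subset C$ with
\[
\limsup_{t\to\infty}\sum_{j\notin F_1}\frac{p_{ij}(t)}{a(t)}<\varepsilon .
\]
By summability of $(c_{ij})_j$, choose a finite $F_2$ with $\sum_{j\notin F_2}c_{ij}<\varepsilon$. Put $F=F_1\cup F_2$. The infimum in \eqref{eq:asymptotic-tightness} is monotone in $F$, since enlarging $F$ only decreases the tail sum of nonnegative terms, so both bounds persist for $F$.

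Next, since $F$ is finite, the pointwise limits \eqref{eq:tightness-pointwise-limit} give
\[
\sum_{j\in F}\frac{p_{ij}(t)}{a(t)}\to\sum_{j\in F}c_{ij}.
\]
Combining the two estimates,
\[
\limsup_{t\to\infty}\left|\sum_{j\in C}\frac{p_{ij}(t)}{a(t)}-c_i\right|
\leq\limsup_{t\to\infty}\sum_{j\notin F}\frac{p_{ij}(t)}{a(t)}+\sum_{j\notin F}c_{ij}<2\varepsilon .
\]
Letting $\varepsilon\downarrow0$ gives the claim.

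There is essentially no obstacle here. The only points needing care are that all terms are nonnegative, so tail sums decrease when $F$ is enlarged, and that the survival probability is finite, so the series are well defined. Both hold because $p_{ij}(t)\geq0$ and $\sum_j p_{ij}(t)\leq1$. I would also remark that Fatou's lemma alone already gives $\liminf\geq c_i$ without tightness; the role of \eqref{eq:asymptotic-tightness} is precisely the upper bound, and it is this upper bound that fails in the hub-and-traps model.
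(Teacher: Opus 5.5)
Your proof is correct and is essentially the paper's argument: the paper makes the same choice $F=F_1\cup F_2$ from tightness and summability for the upper bound, and differs only in proving the lower bound separately by the Fatou-type finite-set argument, whereas you get both bounds at once from a single two-sided estimate. One small wording slip: what is monotone in $F$ is the tail sum $\sum_{j\notin F}p_{ij}(t)/a(t)$ for each $t$, and hence its $\limsup$, not the infimum itself.
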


\begin{proof}
For every finite set $F\subset C$, pointwise convergence and
finiteness of $F$ give
\[
 \sum_{j\in F}\frac{p_{ij}(t)}{a(t)} \to \sum_{j\in F}c_{ij}.
\]
Since all terms are nonnegative,
\[
 \liminf_{t\to\infty}
 \sum_{j\in C}\frac{p_{ij}(t)}{a(t)}
 \geq
 \sum_{j\in F}c_{ij}.
\]
Letting $F$ increase through finite subsets of $C$ yields
\begin{equation}
\label{eq:tightness-lower-bound}
 \liminf_{t\to\infty}
 \sum_{j\in C}\frac{p_{ij}(t)}{a(t)}
 \geq
 \sum_{j\in C}c_{ij}.
\end{equation}

For the upper bound, let $\varepsilon>0$. By
\eqref{eq:asymptotic-tightness}, there is a finite set $F_1\subset C$
such that
\[
 \limsup_{t\to\infty}
 \sum_{j\in C\setminus F_1}
 \frac{p_{ij}(t)}{a(t)}
 <\varepsilon.
\]
Since $\sum_jc_{ij}<\infty$, there is a finite set
$F_2\subset C$ such that
$\sum_{j\in C\setminus F_2}c_{ij}<\varepsilon$.
Put $F=F_1\cup F_2$. Then
\[
 \begin{aligned}
 \limsup_{t\to\infty}
 \sum_{j\in C}\frac{p_{ij}(t)}{a(t)}
 &\leq
 \lim_{t\to\infty}
 \sum_{j\in F}\frac{p_{ij}(t)}{a(t)}
 +
 \limsup_{t\to\infty}
 \sum_{j\in C\setminus F}
 \frac{p_{ij}(t)}{a(t)}\\
 &\leq
 \sum_{j\in F}c_{ij}+\varepsilon
 \leq
 \sum_{j\in C}c_{ij}+\varepsilon.
 \end{aligned}
\]
Letting $\varepsilon\downarrow0$, and combining the resulting upper
bound with \eqref{eq:tightness-lower-bound}, proves
\eqref{eq:survival-from-tightness}.
\end{proof}

\begin{remark}[Application to rank-one fixed-state limits]
\label{rem:rank-one-survival-tightness}
Suppose that
$e^{\lambda t}p_{ij}(t) \sim A x_i m_j t^{-\kappa}L(t)$
for every fixed $i,j\in C$, where
$\sum_{j\in C}m_j<\infty$.
Taking
$a(t)=e^{-\lambda t}t^{-\kappa}L(t)$
in Proposition~\ref{prop:tightness-survival}, we see that asymptotic
tightness of the measures
\[
 \nu_{i,t}(\{j\}) = \frac{e^{\lambda t}p_{ij}(t)} {t^{-\kappa}L(t)}
\]
implies
\[
e^{\lambda t}\mathbb P_i(T_0>t) \sim A x_i \left(\sum_{j\in C}m_j\right) t^{-\kappa}L(t).
\]
A summable dominating family is a convenient sufficient condition for
asymptotic tightness, but asymptotic tightness is the more intrinsic
condition.
\end{remark}

\begin{remark}[Consequences for limiting conditional distributions]
\label{rem:lcd-tightness}
Under the assumptions of
Proposition~\ref{prop:tightness-survival}, if
$\sum_{k\in C}c_{ik}>0$, then
\[
 \Pp_i(X(t)=j\mid T_0>t)
 \to
 \frac{c_{ij}}{\sum_{k\in C}c_{ik}},
 \qquad j\in C.
\]
Thus summability of the limiting coefficients together with
asymptotic tightness yields a proper LCD. In the rank-one case
$c_{ij}=Ax_i m_j$, this becomes ${m_j}/{\sum_{k\in C}m_k}$,
and is independent of the initial state. Without asymptotic tightness,
the pointwise conditional probabilities may all converge to $0$, as
in the hub-and-traps and homogeneous-tree examples.
\end{remark}

\begin{remark}[Escape of normalized mass]
\label{rem:escape-normalized-mass}
The hub-and-traps model shows that summability of the limiting left
factor does not imply asymptotic tightness. When $\kappa>2$, its
coefficient sequence $m$ is summable, but normalized mass escapes to
traps whose indices are of order $t$, and the survival exponent is
$\kappa-1$, rather than $\kappa$.
The homogeneous-tree example exhibits a stronger failure at the
fixed-state normalization. Its fixed-state limits are of order
$e^{-\lambda t}t^{-3/2}$, whereas
$\mathbb P_x(T_0>t)=e^{-\delta t}$
with $\delta<\lambda$. Consequently, the total masses of the
normalized transition measures grow exponentially and are not
asymptotically tight.
\end{remark}

We now return to the fixed-state asymptotics and show that, when the
contributing channels share a common regularly varying edge scale, their
leading contributions combine into an explicit coefficient kernel.

\begin{proposition}[Common-scale multichannel edge asymptotic]
\label{prop:multichannel-edge}
Suppose that the representation \eqref{eq:AM-multi} holds. Fix
$i,j\in E$, and suppose that every channel contributing to this
pair has the same lower endpoint $\lambda$ and satisfies
$\psi_\alpha([\lambda,\lambda+y]) \sim c_\alpha y^\kappa L(1/y)$, 
$y\downarrow0$,
where $\kappa>0$ and the slowly varying function $L$ are common
to the contributing channels.

Suppose also that $Q_i^\alpha$ and $Q_j^\alpha$ are continuous at
$\lambda$, and that, for some $t_0>0$,
\[
\int_{[\lambda,\infty)} e^{-t_0x} \left|Q_i^\alpha(x)Q_j^\alpha(x)\right| \psi_\alpha(\mathrm dx) <\infty
\]
for every channel contributing to the pair $i,j$.
Then
\[
e^{\lambda t}p_{ij}(t) = C_{ij}t^{-\kappa}L(t) + o\bigl(t^{-\kappa}L(t)\bigr),
\]
where
\begin{equation}
\label{eq:multichannel-limit}
 C_{ij}
 =
 \Gamma(\kappa+1)m_j
 \sum_{\alpha\in E_0}
 \frac{c_\alpha}{m_\alpha}
 Q_i^\alpha(\lambda)Q_j^\alpha(\lambda).
\end{equation}
Since $p_{ij}(t)\geq 0$, the expansion implies that $C_{ij}\geq 0$.
If $C_{ij}>0$, then
\[
  e^{\lambda t}p_{ij}(t)
  \sim
  C_{ij}t^{-\kappa}L(t).
\]
If $C_{ij}=0$, cancellation between the edge contributions may occur,
and the proposition gives only
\[
  e^{\lambda t}p_{ij}(t)
  =
  o\bigl(t^{-\kappa}L(t)\bigr).
\]
Under the continuation hypotheses of Anderson and McDunnough, the
sum is finite for every fixed pair $i,j$.
\end{proposition}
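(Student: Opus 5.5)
The plan is to reduce the proposition to the scalar argument already carried out in the proof of Theorem~\ref{thm:bd-spectral-edge}, applied channel by channel, and then sum the finitely many contributions. For the fixed pair $i,j$, the continuation hypotheses give a finite set $E_{ij}\subset E_0$ of channels for which $Q_i^\alpha Q_j^\alpha\not\equiv 0$. By \eqref{eq:AM-multi},
\[
 e^{\lambda t}p_{ij}(t)=m_j\sum_{\alpha\in E_{ij}}\frac{1}{m_\alpha}\,J_\alpha(t),
 \qquad
 J_\alpha(t)=\int_{[0,\infty)}e^{-ty}R_\alpha(y)\,\widetilde\psi_\alpha(\mathrm dy),
\]
where $\widetilde\psi_\alpha(B)=\psi_\alpha(\lambda+B)$ and $R_\alpha(y)=Q_i^\alpha(\lambda+y)Q_j^\alpha(\lambda+y)$. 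Since the sum is finite, it suffices to show that
\[
 J_\alpha(t)=c_\alpha\Gamma(\kappa+1)R_\alpha(0)\,t^{-\kappa}L(t)+o\bigl(t^{-\kappa}L(t)\bigr)
\]
for each $\alpha$. Multiplying by $m_j/m_\alpha$ and summing then gives \eqref{eq:multichannel-limit}.

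For a single channel, first apply the Abelian half of Karamata's theorem \cite[Theorem~1.7.1$'$]{BinghamGoldieTeugels1987} to the edge law. This gives $\int e^{-ty}\widetilde\psi_\alpha(\mathrm dy)\sim c_\alpha\Gamma(\kappa+1)t^{-\kappa}L(t)$. As in \eqref{eq:bd-local-mass}, the same asymptotic holds for the integral over $[0,\eta]$ for any fixed $\eta>0$, because the remaining mass contributes $O(e^{-\eta t})$. The sign of $R_\alpha(0)$ is no longer known, so I would avoid the multiplicative sandwich. Instead, continuity of $Q_i^\alpha,Q_j^\alpha$ at $\lambda$ gives, for each $\varepsilon>0$, an $\eta_\varepsilon$ with $|R_\alpha(y)-R_\alpha(0)|\le\varepsilon$ on $[0,\eta_\varepsilon]$. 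Hence
\[
 \Bigl|\int_{[0,\eta_\varepsilon]}e^{-ty}R_\alpha(y)\,\widetilde\psi_\alpha(\mathrm dy)-R_\alpha(0)\int_{[0,\eta_\varepsilon]}e^{-ty}\,\widetilde\psi_\alpha(\mathrm dy)\Bigr|\le\varepsilon\int_{[0,\eta_\varepsilon]}e^{-ty}\,\widetilde\psi_\alpha(\mathrm dy).
\]
This is additive control, which works whether $R_\alpha(0)$ is positive, negative or zero.

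The part away from the edge is where the new hypothesis is used. In the scalar proof this integrability came from Cauchy--Schwarz and the diagonal representation. Here the assumed bound $\int e^{-t_0x}|Q_i^\alpha Q_j^\alpha|\,\psi_\alpha(\mathrm dx)<\infty$ replaces it directly. For $t>t_0$ and $y\ge\eta_\varepsilon$ we have $e^{-ty}\le e^{-(t-t_0)\eta_\varepsilon}e^{-t_0y}$, so the tail piece is $O(e^{-(t-t_0)\eta_\varepsilon})$. This is $o(t^{-\kappa}L(t))$ because $L$ is slowly varying. Dividing by $t^{-\kappa}L(t)$ and taking $\limsup$ shows that $J_\alpha(t)/(t^{-\kappa}L(t))$ lies within $\varepsilon c_\alpha\Gamma(\kappa+1)$ of $c_\alpha\Gamma(\kappa+1)R_\alpha(0)$. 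Letting $\varepsilon\downarrow0$ gives the single-channel claim.

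The consequences are then immediate. Nonnegativity of $p_{ij}$ forces $C_{ij}\ge0$: if $C_{ij}<0$, the expansion would make $e^{\lambda t}p_{ij}(t)$ eventually negative, since $t^{-\kappa}L(t)>0$. If $C_{ij}>0$, the expansion is an asymptotic equivalence. If $C_{ij}=0$, only the $o$-bound survives. The main obstacle is the loss of positivity of $R_\alpha(0)$, which is handled by the additive $\varepsilon$-approximation above. One minor point also needs checking: a channel with $R_\alpha(0)=0$ still contributes $o(t^{-\kappa}L(t))$, which the same argument gives.
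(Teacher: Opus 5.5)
Your proposal is correct and follows the paper's proof: you apply the Abelian Karamata theorem channel by channel, localize at the edge using continuity at $\lambda$ and the $t_0$-integrability (which makes the off-edge part exponentially small), and sum over the finitely many contributing channels. Your additive $\varepsilon$-control is a worthwhile sharpening: the paper only says it uses ``the same edge-localization argument'' as in the scalar theorem, but that argument's multiplicative sandwich needs $R_{ij}(0)>0$, whereas here $R_\alpha(0)=Q_i^\alpha(\lambda)Q_j^\alpha(\lambda)$ may be zero or negative.
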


\begin{proof}
Fix $i,j\in E$. Under the continuation hypotheses, only finitely
many channels contribute to the representation of $p_{ij}(t)$.
Let $A_{ij}\subset E_0$ denote this finite set.
For each $\alpha\in A_{ij}$, define the shifted measure
$\widetilde\psi_\alpha$ on $[0,\infty)$ by
$\widetilde\psi_\alpha(B) = \psi_\alpha(\lambda+B)$.
Then
$\widetilde\psi_\alpha([0,y]) \sim c_\alpha y^\kappa L(1/y)$, $y\downarrow0$.
The Abelian direction of Karamata's theorem for
Laplace--Stieltjes transforms therefore gives
\[
\int_{[0,\infty)}e^{-ty}\, \widetilde\psi_\alpha(\mathrm dy) \sim c_\alpha\Gamma(\kappa+1)t^{-\kappa}L(t).
\]
By the assumed continuity at $\lambda$,
$Q_i^\alpha(\lambda+y)Q_j^\alpha(\lambda+y) \to
Q_i^\alpha(\lambda)Q_j^\alpha(\lambda)$ $(y\downarrow0)$.
The integrability assumption permits the same edge-localization
argument used in the proof of
Theorem~\ref{thm:bd-spectral-edge}. Consequently, for each
$\alpha\in A_{ij}$,
\[
\int_{[\lambda,\infty)}
 e^{-xt}Q_i^\alpha(x)Q_j^\alpha(x)\,
 \psi_\alpha(\mathrm dx)
 =
 e^{-\lambda t}
 c_\alpha\Gamma(\kappa+1)
 Q_i^\alpha(\lambda)Q_j^\alpha(\lambda)
 t^{-\kappa}L(t)
 +
 o\bigl(e^{-\lambda t}t^{-\kappa}L(t)\bigr).
\]
Substituting these channelwise expansions into
\eqref{eq:AM-multi}, and using the finiteness of $A_{ij}$, gives
\[
 e^{\lambda t}p_{ij}(t)
 =
 \Gamma(\kappa+1)m_j
 \sum_{\alpha\in A_{ij}}
 \frac{c_\alpha}{m_\alpha}
 Q_i^\alpha(\lambda)Q_j^\alpha(\lambda)
 t^{-\kappa}L(t)
 +
 o\bigl(t^{-\kappa}L(t)\bigr).
\]
Since the terms outside $A_{ij}$ vanish identically for the fixed
pair $i,j$, the sum may equivalently be written over $E_0$.
This proves \eqref{eq:multichannel-limit}.
\end{proof}

The symmetrized coefficient kernel 
$\widetilde C_{ij} = C_{ij} \sqrt{{m_i}/{m_j}}$
therefore satisfies
\[
 \widetilde C_{ij}
 =
 \Gamma(\kappa+1)
 \sum_{\alpha\in E_0}
 c_\alpha
 \left(
  \sqrt{\frac{m_i}{m_\alpha}}\,
  Q_i^\alpha(\lambda)
 \right)
 \left(
  \sqrt{\frac{m_j}{m_\alpha}}\,
  Q_j^\alpha(\lambda)
 \right).
\]
Thus $\widetilde C$ is a sum of rank-one kernels and is positive
semidefinite. Indeed, it has the form
$\widetilde C_{ij}=\langle v_i,v_j\rangle$
for vectors $v_i$ whose coordinates are indexed by the contributing
channels.
The original coefficient kernel $C$ has the same
rank as $\widetilde C$, since the two are related by invertible diagonal
rescaling. Rank one follows precisely when 
the vectors $v_i$ span a one-dimensional subspace.
Therefore a revised form of
the rank-one conjecture should involve effective multiplicity~$1$ at the
lower edge, or uniqueness of the relevant positive $\lambda$-harmonic
function.

If different channels have different edge exponents, the leading scale
for a fixed pair is determined by the smallest visible exponent, unless
the corresponding contributions cancel. Thus a common fixed-state
exponent requires every pair to couple nontrivially to the same dominant
edge scale, while rank one further requires the associated edge vectors
to span a one-dimensional subspace.

Survival remains separate. Passing from fixed-state limits to
$\Pp_i(T_0>t)$ requires control of the normalized mass outside finite
sets; Proposition~\ref{prop:tightness-survival} identifies asymptotic
tightness as the required condition.

\section{Killed random walk on a homogeneous tree}
\label{sec:tree-counterexample}

The multichannel discussion suggests that rank-one asymptotics should
be expected only when the dominant spectral edge has effective
multiplicity $1$. To test this idea we now examine a highly symmetric
irreducible example whose limiting edge coefficient kernel is not
effectively one-dimensional. The resulting kernel is positive
semidefinite but not rank one.

Let $\mathbb T_{q+1}$, where $q\geq2$, be the homogeneous tree, that is,
the infinite connected graph without cycles in which every vertex has
degree $q+1$. Its vertex set is countably infinite and may therefore be
identified with $\{1,2,\ldots\}$. Add an absorbing state $0$, and write
$S=\{0\}\cup C$, $C=\mathbb T_{q+1}$. From each $x\in C$, the process
jumps to each neighbouring vertex at rate $1$ and is killed into~$0$ at
rate $\delta>0$. Thus, writing $x\sim y$ when the vertices $x$ and $y$
are adjacent,
\[
 q_{xy}
 =
 \begin{cases}
  1, & x,y\in C,\ x\sim y,\\
  \delta, & x\in C,\ y=0,\\
  -(q+1+\delta), & x=y\in C,\\
  0, & \text{otherwise}.
 \end{cases}
\]
The class $C$ is irreducible because the tree is connected. Moreover,
$\sup_{x\in C}q_x=q+1+\delta<\infty$. Thus $Q$ is regular
and the minimal process is nonexplosive. 
Killing occurs at an independent exponential time with rate $\delta$, and hence absorption occurs almost surely from every $x\in C$.

Let $\mathcal A$ denote the adjacency operator of~$\mathbb T_{q+1}$, acting
on $\ell^2(C)$, and let
$L=(q+1)I-\mathcal A$.
The unkilled transition semigroup is $e^{-tL}$. If $h_t(x,y)$ denotes its transition kernel, then the killed transition probabilities are
\begin{equation}
\label{eq:tree-killed-kernel}
 p_{xy}(t)=e^{-\delta t}h_t(x,y),
 \qquad x,y\in C.
\end{equation}
We first record the spectral information needed for the asymptotic.
The spherical spectral representation and the associated root spectral
measure are standard; see for example~\cite{Woess2000}. We include a
derivation to fix the normalization.

\begin{lemma}[Spherical spectral representation]
\label{lem:tree-spectral-representation}
Fix a vertex $o\in C$. The spectral measure of~$\mathcal A$ associated with
$\mathbf 1_{\{o\}}$ is supported on
$[-2\sqrt q,2\sqrt q]$ and has density
\begin{equation}
\label{eq:kesten-mckay-density}
 w_q(s)
 =
 \frac{q+1}{2\pi}
 \frac{\sqrt{4q-s^2}}
      {(q+1)^2-s^2}
 \mathbf 1_{\{|s|<2\sqrt q\}}.
\end{equation}
For every $x\in C$, with $r=d(o,x)$,
\begin{equation}
\label{eq:tree-spherical-integral}
 h_t(o,x)
 =
 e^{-(q+1)t}
 \int_{-2\sqrt q}^{2\sqrt q}
 e^{st}P_r(s)w_q(s)\,\mathrm ds,
\end{equation}
where $P_r$ is the spherical polynomial determined by
$P_0(s)=1$, $P_1(s)=s/(q+1)$, and
$sP_r(s)=P_{r-1}(s)+qP_{r+1}(s)$, $r\geq1$.
At the upper spectral endpoint,
\begin{equation}
\label{eq:tree-edge-spherical-function}
 P_r(2\sqrt q)
 =
 \varphi_0(r)
 :=
 q^{-r/2}
 \left(
  1+\frac{q-1}{q+1}r
 \right).
\end{equation}
\end{lemma}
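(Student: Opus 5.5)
The plan is to reduce everything to the radial (spherical) part of $\mathcal A$ around $o$, identify that part with a nearest-neighbour Jacobi matrix, and read off its spectral measure from the resolvent.

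\textbf{Step 1: radial reduction.} Let $S_r$ be the sphere of radius $r$ about $o$, so $|S_0|=1$ and $|S_r|=(q+1)q^{r-1}$ for $r\geq1$. Let $\chi_r=|S_r|^{-1/2}\mathbf 1_{S_r}$. These vectors are orthonormal in $\ell^2(C)$, and their closed span $\mathcal H_o$ is invariant under $\mathcal A$:
\[
\mathcal A\chi_0=\sqrt{q+1}\,\chi_1,\qquad \mathcal A\chi_r=\sqrt q\,\chi_{r-1}+\sqrt q\,\chi_{r+1}\quad (r\geq2),
\]
and $\mathcal A\chi_1=\sqrt{q+1}\,\chi_0+\sqrt q\,\chi_2$. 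Since $\mathcal A$ is bounded and self-adjoint, $\mathcal H_o$ reduces $\mathcal A$. So the spectral measure of $\mathbf 1_{\{o\}}=\chi_0$ equals the spectral measure of this Jacobi matrix at its first basis vector.

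\textbf{Step 2: the spectral measure.} Compute the Stieltjes transform $m(z)=\langle \chi_0,(z-\mathcal A)^{-1}\chi_0\rangle$ by continued fractions. The tail $r\geq1$ is the free Jacobi matrix with off-diagonal $\sqrt q$, whose transform $g$ satisfies $g=1/(z-qg)$. This gives $g(z)=(z-\sqrt{z^2-4q})/(2q)$. Hence
\[
m(z)=\frac{1}{z-(q+1)g(z)}.
\]
Simplifying gives
\[
m(z)=\frac{(q-1)z+(q+1)\sqrt{z^2-4q}}{2\,((q+1)^2-z^2)}.
\]
Stieltjes inversion, $w=-\pi^{-1}\operatorname{Im} m(s+i0)$, then yields \eqref{eq:kesten-mckay-density}.

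We must rule out atoms. The candidates are the poles $z=\pm(q+1)$. There the numerator is $(q-1)(q+1)\pm(q+1)(q-1)$ with the correct branch, which vanishes. So these poles are removable on the physical sheet, and $\mathbf 1_{\{o\}}$ sees only the absolutely continuous part on $[-2\sqrt q,2\sqrt q]$. As a sanity check, total mass $1$ follows from $m(z)\sim 1/z$.

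\textbf{Step 3: the integral formula and the edge value.} Define $f_r=\sum_{x\in S_r}\mathbf 1_{\{x\}}$. Then $\mathcal A$ acts on radial functions by $\mathcal A f_0=f_1$ and $\mathcal A f_r=qf_{r-1}+f_{r+1}$. Equivalently, acting on the value $\phi(r)$ of a radial function, $(\mathcal A\phi)(0)=(q+1)\phi(1)$ and $(\mathcal A\phi)(r)=\phi(r-1)+q\phi(r+1)$. The spherical polynomials $P_r$ are exactly the radial eigenfunctions with $P_0=1$. By the functional calculus on the reducing subspace, $\mathbf 1_{\{x\}}$ projected onto $\mathcal H_o$ is $|S_r|^{-1}\mathbf 1_{S_r}$. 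The unitary map to $L^2(w_q)$ sends $\chi_r\mapsto |S_r|^{1/2}P_r$, which one checks from the three-term recurrence. Therefore
\[
\langle \mathbf 1_{\{o\}},F(\mathcal A)\mathbf 1_{\{x\}}\rangle=\int F(s)P_r(s)w_q(s)\,\mathrm ds .
\]
Taking $F(s)=e^{-t(q+1)}e^{st}$ gives \eqref{eq:tree-spherical-integral}, because $e^{-tL}=e^{-(q+1)t}e^{t\mathcal A}$.

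For \eqref{eq:tree-edge-spherical-function}, at $s=2\sqrt q$ the recurrence $qP_{r+1}-2\sqrt q P_r+P_{r-1}=0$ has a double root $q^{-1/2}$. So $P_r=q^{-r/2}(\alpha+\beta r)$. The conditions $P_0=1$ and $P_1=2\sqrt q/(q+1)$ force $\alpha=1$ and $\beta=(q-1)/(q+1)$.

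\textbf{Main obstacle.} The delicate point is the continued-fraction and Stieltjes-inversion step: choosing the square-root branch correctly, and verifying that the apparent poles at $\pm(q+1)$ are removable, so that no atoms appear. I would handle this by checking the branch at large real $z$, where $\sqrt{z^2-4q}\sim z$, and by evaluating $g(\pm(q+1))=1/q\cdot(\pm 1)$ explicitly.
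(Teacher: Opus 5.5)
Your proposal follows the paper's proof essentially step for step: radial reduction at $o$, the branch recursion $g=1/(z-qg)$ with $m=1/(z-(q+1)g)$, Stieltjes inversion, the spectral theorem on the radial subspace, and the double root $q^{-1/2}$ at $s=2\sqrt q$. Your explicit Jacobi matrix and your check that $\chi_r\mapsto|S_r|^{1/2}P_r$ are correct, and they supply details the paper leaves implicit.

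There is one slip, in the rationalized transform. Rationalizing $m=1/(z-(q+1)g)$ gives
\[
 m(z)=\frac{(q-1)z-(q+1)\sqrt{z^2-4q}}{2\bigl((q+1)^2-z^2\bigr)},
\]
not the version with $+(q+1)\sqrt{z^2-4q}$. With your sign, $m(z)\sim -q/z$ and inversion returns $-w_q$. Also, your numerator equals $\pm2(q^2-1)\neq0$ at $z=\pm(q+1)$, so the cancellation you claim there does not happen for the formula as displayed.

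It is simpler to keep the unrationalized form used in the paper, $m(z)=2q/\bigl((q-1)z+(q+1)\sqrt{z^2-4q}\bigr)$. Its denominator equals $\pm2(q^2-1)\neq0$ at $z=\pm(q+1)$; this is exactly your check $g(\pm(q+1))=\pm1/q$. The denominator is also nonzero on the closed cut, so $m$ is bounded on the upper half-plane. That rules out atoms and any singular continuous part at once, which the paper itself does not address explicitly.
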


\begin{proof}
We first determine the spectral measure at a fixed vertex from its
resolvent. Homogeneity implies that this measure is independent of the
chosen vertex, while the tree structure reduces the resolvent
calculation to a scalar recursion on a forward branch.

Fix a vertex $o\in C$ and write
\[
  G_o(z)
  =
  \left\langle
    \mathbf 1_{\{o\}},
    (zI-\mathcal A)^{-1}\mathbf 1_{\{o\}}
  \right\rangle.
\]
This is the diagonal resolvent at $o$. If $H(z)$ denotes the corresponding
diagonal resolvent at the root of a forward branch obtained by removing
one edge, then removal of the branch root gives
\[
  H(z)=\frac{1}{z-qH(z)}.
\]
The solution satisfying $H(z)\sim z^{-1}$ as $z\to\infty$ is
\[
 H(z) = \frac{z-\sqrt{z^2-4q}}{2q},
\]
where the square root is chosen so that
$\sqrt{z^2-4q}\sim z$ at infinity. At the root,
\[
 G_o(z) = \frac{1}{z-(q+1)H(z)} = \frac{2q} {(q-1)z+(q+1)\sqrt{z^2-4q}}.
\]
The Stieltjes inversion formula recovers the density of the spectral
measure from the imaginary part of the boundary value of its resolvent.
Taking $z=s+\mathrm i0$, with $|s|<2\sqrt q$, therefore gives
\[
 -\frac{1}{\pi}\operatorname{Im}G_o(s+\mathrm i0) = \frac{q+1}{2\pi} \frac{\sqrt{4q-s^2}} {(q+1)^2-s^2},
\]
which proves \eqref{eq:kesten-mckay-density}.

The radial subspace generated by $\mathbf 1_{\{o\}}$ is invariant
under $\mathcal A$. The generalized radial eigenfunction normalized to equal
$1$ at $o$ satisfies
$P_0(s)=1$, $(q+1)P_1(s)=sP_0(s)$,
and, away from the root,
$P_{r-1}(s)+qP_{r+1}(s)=sP_r(s)$.
The spectral theorem then gives
\eqref{eq:tree-spherical-integral}.

At $s=2\sqrt q$, the recurrence has a repeated characteristic root
$q^{-1/2}$. Hence
$P_r(2\sqrt q) = q^{-r/2}(a+br)$
for constants $a,b$. The condition $P_0=1$ gives $a=1$, while
$P_1(2\sqrt q)=2\sqrt q/(q+1)$ gives $b=(q-1)/(q+1)$.
This proves \eqref{eq:tree-edge-spherical-function}.
\end{proof}

The long-time behaviour of the heat kernel is determined by the spectral
density near its upper endpoint. The preceding representation therefore
reduces the required asymptotic to an endpoint Laplace calculation.

\begin{theorem}[Fixed-state heat-kernel asymptotic]
\label{thm:tree-heat-kernel-asymptotic}
For every fixed $x,y\in C$,
\begin{equation}
\label{eq:tree-rigorous-asymptotic}
 h_t(x,y)
 \sim
 C_q\varphi_0(d(x,y))
 e^{-(q+1-2\sqrt q)t}t^{-3/2},
 \qquad t\to\infty,
\end{equation}
where
\begin{equation}
\label{eq:tree-constant}
 C_q
 =
 \frac{(q+1)q^{1/4}}
 {2\sqrt{\pi}(q-1)^2}
\end{equation}
and
\[
 \varphi_0(r) = q^{-r/2} \left( 1+\frac{q-1}{q+1}r \right).
\]
\end{theorem}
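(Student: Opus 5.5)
The plan is to reduce to the spherical integral of Lemma~\ref{lem:tree-spectral-representation} and then carry out an endpoint Laplace analysis. This is the same edge-localization used in the proof of Theorem~\ref{thm:bd-spectral-edge}, but simpler, because the spectral measure has compact support.

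\emph{Reduction to a spherical integral.} Fix $x,y\in C$ and put $r=d(x,y)$. The automorphism group of $\mathbb T_{q+1}$ acts transitively on vertices, and $\mathcal A$ commutes with every automorphism. Hence $h_t$ is automorphism-invariant, and we may take $o=x$ in Lemma~\ref{lem:tree-spectral-representation}. This gives
\[
 e^{(q+1-2\sqrt q)t}h_t(x,y)=\int_{-2\sqrt q}^{2\sqrt q}e^{-(2\sqrt q-s)t}P_r(s)w_q(s)\,\mathrm ds .
\]
Substitute $s=2\sqrt q-u$ with $u\in[0,4\sqrt q]$. The right-hand side becomes $\int_0^{4\sqrt q}e^{-ut}R(u)\,\widetilde w(u)\,\mathrm du$, where $R(u)=P_r(2\sqrt q-u)$ and $\widetilde w(u)=w_q(2\sqrt q-u)$. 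Here $R$ is a polynomial, so it is continuous and bounded on the compact interval. Its edge value is $R(0)=\varphi_0(r)>0$ by \eqref{eq:tree-edge-spherical-function}.

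\emph{Edge behaviour of the density.} As $u\downarrow0$ we have $4q-s^2=u(4\sqrt q-u)\sim4\sqrt q\,u$ and $(q+1)^2-s^2\to(q-1)^2$. Therefore
\[
 \widetilde w(u)\sim\frac{(q+1)}{2\pi}\,\frac{2q^{1/4}u^{1/2}}{(q-1)^2}
 =\frac{(q+1)q^{1/4}}{\pi(q-1)^2}\,u^{1/2}.
\]
This is the density form of the edge law in the Remark after Theorem~\ref{thm:bd-spectral-edge}, with $\kappa=3/2$ and $L\equiv1$.

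\emph{Endpoint Laplace estimate.} Fix $\varepsilon>0$ and choose $\eta>0$ so that, on $[0,\eta]$, both $R(u)$ lies within a factor $1\pm\varepsilon$ of $\varphi_0(r)$ and $\widetilde w(u)$ lies within a factor $1\pm\varepsilon$ of its edge asymptote. Then
\[
\int_0^\eta e^{-ut}u^{1/2}\,\mathrm du=\Gamma(3/2)\,t^{-3/2}+O(e^{-\eta t/2}),
\qquad \Gamma(3/2)=\sqrt\pi/2 .
\]
On $[\eta,4\sqrt q]$ the integrand is bounded by $e^{-\eta t}\sup|R|\sup\widetilde w$, which is $o(t^{-3/2})$. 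Letting $\varepsilon\downarrow0$ gives
\[
e^{(q+1-2\sqrt q)t}h_t(x,y)\sim\varphi_0(r)\,\frac{(q+1)q^{1/4}}{\pi(q-1)^2}\,\frac{\sqrt\pi}{2}\,t^{-3/2}=C_q\varphi_0(r)\,t^{-3/2},
\]
which is \eqref{eq:tree-rigorous-asymptotic}. Alternatively, one could invoke the Abelian Karamata step exactly as in \eqref{eq:bd-karamata} with $c\Gamma(\kappa+1)$ computed from the edge law.

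\emph{Main obstacle.} The only delicate points are the homogeneity reduction and the exact constant. For the reduction, I would verify that the spectral representation in Lemma~\ref{lem:tree-spectral-representation} holds at every base point with the same density $w_q$. This follows because the resolvent computation there used only the local tree structure. For the constant, the care lies in the expansion $4q-s^2\sim4\sqrt q\,u$, which produces the factor $q^{1/4}$, and in the denominator $(q+1)^2-4q=(q-1)^2$. Note that positivity of $\varphi_0(r)$ is essential: it guarantees that the leading edge term does not cancel, so we obtain a genuine asymptotic equivalence rather than merely an $o(t^{-3/2})$ bound.
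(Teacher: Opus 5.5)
Your proposal is correct and follows essentially the same route as the paper: reduction to $x=o$ by homogeneity, the substitution $s=2\sqrt q-u$ with edge law $w_q(2\sqrt q-u)\sim D_q u^{1/2}$, an exponentially small bound on the part of the integral away from the edge, and the identity $D_q\Gamma(3/2)=C_q$. Your explicit $(1\pm\varepsilon)$ sandwich, which uses $\varphi_0(r)>0$, simply spells out the step the paper compresses into ``the endpoint asymptotic and continuity of $P_r$ give''.
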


\begin{proof}
By homogeneity, it is enough to prove the result for $x=o$ and
$r=d(o,y)$. Put
$s=2\sqrt q-u$.
As $u\downarrow0$,
$\sqrt{4q-s^2} = \sqrt{4\sqrt q\,u-u^2} \sim 2q^{1/4}u^{1/2}$,
and
$(q+1)^2-s^2 \to (q+1)^2-4q = (q-1)^2$.
It follows from \eqref{eq:kesten-mckay-density} that
\begin{equation}
\label{eq:tree-density-edge}
 w_q(2\sqrt q-u)
 \sim
 D_q u^{1/2},
 \qquad
 D_q
 =
 \frac{(q+1)q^{1/4}}
 {\pi(q-1)^2}.
\end{equation}
Since $P_r$ is a polynomial,
$P_r(2\sqrt q-u) \to P_r(2\sqrt q) = \varphi_0(r)$.

Fix $\eta\in(0,2\sqrt q)$ small enough that $P_r(2\sqrt q-u)>0$ for
$0\leq u\leq\eta$. The part of the integral in
\eqref{eq:tree-spherical-integral} corresponding to $s\leq2\sqrt q-\eta$
is $O\left( e^{-(q+1-2\sqrt q+\eta)t} \right)$, because $P_r$ is bounded
on the compact spectral interval and $w_q(s)\,\mathrm ds$ is a finite
measure.

For the remaining part, the endpoint asymptotic
\eqref{eq:tree-density-edge} and continuity of $P_r$ give
\[
 \begin{aligned}
 h_t(o,y)
 &\sim
 e^{-(q+1-2\sqrt q)t}
 D_q\varphi_0(r)
 \int_0^\eta e^{-tu}u^{1/2}\,\mathrm du\\
 &\sim
 e^{-(q+1-2\sqrt q)t}
 D_q\varphi_0(r)
 \Gamma(3/2)t^{-3/2}.
 \end{aligned}
\]
Since $\Gamma(3/2)={\sqrt\pi}/{2}$, we have
\[
 D_q\Gamma(3/2) = \frac{(q+1)q^{1/4}} {2\sqrt\pi(q-1)^2} = C_q.
\]
This proves the result.
\end{proof}

\begin{remark}[Positive semidefiniteness of the edge kernel]
\label{rem:tree-edge-positive-semidefinite}
For every finite set $F\subset C$ and every real family
$(a_x:x\in F)$, self-adjointness and positivity of the heat
semigroup give
$\sum_{x,y\in F}a_xa_yh_t(x,y)\geq 0$.
Multiplying by the positive scalar
$C_q^{-1}e^{(q+1-2\sqrt q)t}t^{3/2}$
and using Theorem~\ref{thm:tree-heat-kernel-asymptotic} term by term
over the finite set $F\times F$ yields
$\sum_{x,y\in F} a_xa_y\varphi_0(d(x,y)) \geq 0$.
Hence
$(\varphi_0(d(x,y)),\, x,y\in C)$
is a positive semidefinite kernel. Consequently, the coefficient
kernel in Theorem~\ref{thm:tree-counterexample} is also positive
semidefinite.
\end{remark}

The preceding theorem agrees, after conversion of the time
normalization, with the sharp fixed-distance asymptotic for the natural
Laplacian on a homogeneous tree obtained in
\cite{Papageorgiou2026Arxiv}. The direct spectral derivation above is
included to fix the convention that every edge is traversed at rate
$1$.

Combining the fixed-state heat-kernel asymptotic with independent
exponential killing now gives the announced counterexample: the
fixed-state probabilities have a common polynomial correction, but
their coefficient kernel is not rank one and survival occurs on a
different exponential scale.

\begin{theorem}[An irreducible $\lambda$-transient counterexample]
\label{thm:tree-counterexample}
For killed random walk on $\mathbb T_{q+1}$ defined above, put
$\lambda = \delta+q+1-2\sqrt q$.
Then the following statements hold.

\begin{enumerate}
\item The transient class $C=\mathbb T_{q+1}$ is irreducible.

\item For every fixed $x,y\in C$,
\begin{equation}
\label{eq:tree-killed-asymptotic}
 e^{\lambda t}p_{xy}(t)
 \sim
 C_q\varphi_0(d(x,y))t^{-3/2}.
\end{equation}
Thus all fixed-state transition probabilities have the common residual
exponent $3/2$.

\item The chain is $\lambda$-transient.

\item The coefficient kernel
$c_{xy} = C_q\varphi_0(d(x,y))$
does not have rank one.

\item The survival probability has exponential decay parameter
$\delta$, strictly smaller than $\lambda$. In particular, it
does not have a polynomially renormalised limit at the fixed-state
decay parameter $\lambda$.
\end{enumerate}
\end{theorem}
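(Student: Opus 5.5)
The plan is to take the five items in turn, each resting on Theorem~\ref{thm:tree-heat-kernel-asymptotic} and the factorisation \eqref{eq:tree-killed-kernel}.

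Item (i) is immediate: the tree is connected and $q_{xy}=1$ for adjacent vertices, as already noted in the construction. For item (ii), multiply \eqref{eq:tree-killed-kernel} by $e^{\lambda t}=e^{\delta t}e^{(q+1-2\sqrt q)t}$. This cancels the killing factor exactly, and gives $e^{\lambda t}p_{xy}(t)=e^{(q+1-2\sqrt q)t}h_t(x,y)$. Theorem~\ref{thm:tree-heat-kernel-asymptotic} then yields \eqref{eq:tree-killed-asymptotic} directly. Since $\varphi_0(r)>0$, the coefficient is strictly positive. As in the last part of the proof of Theorem~\ref{thm:bd-spectral-edge}, it follows that $t^{-1}\log p_{xy}(t)\to-\lambda$, so $\lambda$ is indeed the decay parameter.

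Item (iii) follows because $t^{-3/2}$ is integrable at infinity. On any bounded interval $[0,T]$ the integrand is bounded by $e^{\lambda T}$, and beyond $T$ the asymptotic \eqref{eq:tree-killed-asymptotic} dominates it by $2C_q\varphi_0(d(x,y))t^{-3/2}$. Hence $\int_0^\infty e^{\lambda t}p_{xy}(t)\,\mathrm dt<\infty$.

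For item (iv), a rank-one kernel $c_{xy}=u_xv_y$ with all entries positive must have every $2\times2$ minor equal to zero. So it suffices to exhibit one nonzero minor. Take a vertex $o$ and two distinct neighbours $x,y$ of $o$, so that $d(x,y)=2$. The minor with rows $\{o,x\}$ and columns $\{o,y\}$ is
\[
c_{oo}c_{xy}-c_{oy}c_{xo}=C_q^2\bigl(\varphi_0(0)\varphi_0(2)-\varphi_0(1)^2\bigr).
\]
Write $\beta=(q-1)/(q+1)$. Then
\[
\varphi_0(0)\varphi_0(2)-\varphi_0(1)^2=q^{-1}\bigl[(1+2\beta)-(1+\beta)^2\bigr]=-q^{-1}\beta^2\neq0,
\]
because $q\geq2$ gives $\beta\neq0$. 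This minor is the step I expect to need the most care. One has to pick a triple of vertices whose distances really differ (here $0,1,1,2$), because along a single geodesic direction the affine-times-geometric form of $\varphi_0$ might tempt one to look for a factorisation. The explicit minor rules this out.

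Item (v) uses the fact that the unkilled walk is conservative. The $q$-matrix of the unkilled walk is bounded, hence regular, so $\sum_y h_t(x,y)=1$. Summing \eqref{eq:tree-killed-kernel} over $y$ then gives $\Pp_x(T_0>t)=e^{-\delta t}$ exactly, so the survival decay parameter is $\delta$. Since $q+1-2\sqrt q=(\sqrt q-1)^2>0$, we have $\delta<\lambda$. Consequently
\[
e^{\lambda t}\Pp_x(T_0>t)=e^{(\lambda-\delta)t}
\]
grows exponentially, and no factor $t^{-\kappa_0}L_0(t)$ with $L_0$ slowly varying can make it converge to a positive finite limit. The reason is that $t^{-1}\log\bigl(t^{-\kappa_0}L_0(t)\bigr)\to0$, by the same slowly-varying fact quoted in the proof of Theorem~\ref{thm:bd-spectral-edge}.
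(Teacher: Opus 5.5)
Your proposal is correct and follows the paper's proof item by item: the killing factor cancels exactly, $t^{-3/2}$ is integrable, a $2\times2$ minor is nonzero, and $\Pp_x(T_0>t)=e^{-\delta t}$. The one difference is in (iv), where the paper uses the principal minor of an adjacent pair $x\sim y$, with determinant $C_q^2\bigl(1-\varphi_0(1)^2\bigr)=C_q^2(q-1)^2/(q+1)^2>0$; your non-principal minor $-C_q^2q^{-1}\beta^2$ is equally valid, but your claim that one \emph{has to} use three vertices with distances $0,1,1,2$ is not correct.
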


\begin{proof}
Irreducibility was established above from the connectedness of the
tree. Combining \eqref{eq:tree-killed-kernel} with
Theorem~\ref{thm:tree-heat-kernel-asymptotic} gives
\eqref{eq:tree-killed-asymptotic}.
Because
$e^{\lambda t}p_{xy}(t) \sim C_q\varphi_0(d(x,y))t^{-3/2}$,
and $t^{-3/2}$ is integrable at infinity,
$\int_0^\infty e^{\lambda t}p_{xy}(t)\,\mathrm dt<\infty$.
Thus the irreducible class $C$ is $\lambda$-transient.

To disprove rank-one factorisation, choose adjacent vertices
$x\sim y$. Then
\[
 c_{xx}=c_{yy}=C_q,
 \qquad
 c_{xy}=c_{yx}=C_q\varphi_0(1),
\]
where
\[
 \varphi_0(1) = q^{-1/2} \left( 1+\frac{q-1}{q+1} \right) = \frac{2\sqrt q}{q+1}.
\]
Therefore,
\[
 \det
 \begin{pmatrix}
  c_{xx}&c_{xy}\\
  c_{yx}&c_{yy}
 \end{pmatrix}
 =
 C_q^2
 \left(
  1-\frac{4q}{(q+1)^2}
 \right)
 =
 C_q^2\frac{(q-1)^2}{(q+1)^2}
 >0.
\]
Every $2\times2$ minor of a rank-one matrix is $0$. Hence
$(c_{xy})$ is not rank one.

Finally, killing occurs through an independent exponential clock with
rate $\delta$, so
$\mathbb P_x(T_0>t)=e^{-\delta t}$.
Moreover, independence of the killing clock gives
\[
 \Pp_x(X(t)=y\mid T_0>t)
 =
 h_t(x,y),
 \qquad x,y\in C.
\]
By Theorem~\ref{thm:tree-heat-kernel-asymptotic},
$h_t(x,y)\to0$ for every fixed $x,y\in C$. Hence the conditioned laws
escape to infinity and there is no proper LCD over~$C$.
Consequently,
\[
e^{\lambda t}\mathbb P_x(T_0>t) = e^{(q+1-2\sqrt q)t} = e^{(\sqrt q-1)^2t}.
\]
Since $q\geq2$, this grows exponentially. Thus no polynomial
renormalisation at the fixed-state decay parameter $\lambda$ can
produce a finite positive survival limit.
\end{proof}

Although the edge coefficient kernel is not rank one, the explicit
fixed-state asymptotic still yields a complete ratio limit.

\begin{corollary}[A fixed-state ratio limit]
For fixed $x,y,z\in C$,
\[
  \frac{p_{xy}(t)}{p_{xz}(t)}
  \to
  \frac{\varphi_0(d(x,y))}{\varphi_0(d(x,z))},
  \qquad t\to\infty.
\]
In particular,
${p_{xy}(t)}/{p_{xx}(t)}
\to \varphi_0(d(x,y))$.
\end{corollary}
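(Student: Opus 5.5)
The ratio limit follows directly from Theorem~\ref{thm:tree-counterexample}, part~(ii), or equivalently from Theorem~\ref{thm:tree-heat-kernel-asymptotic} combined with the killing identity \eqref{eq:tree-killed-kernel}. The plan is to write the ratio as a quotient of two quantities that share the same normalization, and then let the common scale cancel.

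First, we multiply numerator and denominator by $e^{\lambda t}t^{3/2}$ and write
\[
  \frac{p_{xy}(t)}{p_{xz}(t)}
  =
  \frac{e^{\lambda t}t^{3/2}p_{xy}(t)}{e^{\lambda t}t^{3/2}p_{xz}(t)}.
\]
By \eqref{eq:tree-killed-asymptotic}, the numerator converges to $C_q\varphi_0(d(x,y))$ and the denominator converges to $C_q\varphi_0(d(x,z))$. The killing factor $e^{-\delta t}$ is in fact identical in both terms, so one could equally work with $h_t$ directly via Theorem~\ref{thm:tree-heat-kernel-asymptotic}.

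Next, we check that the denominator limit is nonzero, so that the limit of the quotient equals the quotient of the limits. This holds because $C_q>0$ by \eqref{eq:tree-constant}, and because $\varphi_0(r)=q^{-r/2}\bigl(1+\frac{q-1}{q+1}r\bigr)>0$ for every $r\geq0$, since $q\geq2$. The constant $C_q$ then cancels, which gives the stated limit.

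Finally, for the particular case we take $z=x$. Then $d(x,x)=0$ and $\varphi_0(0)=1$, so $p_{xy}(t)/p_{xx}(t)\to\varphi_0(d(x,y))$.

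There is no real obstacle here. The only point needing attention is the strict positivity of the limiting coefficient in the denominator, which the explicit form of $\varphi_0$ provides. It is worth noting that the ratio limit does not require a rank-one kernel: it uses only the common scale $e^{-\lambda t}t^{-3/2}$ shared by all fixed pairs.
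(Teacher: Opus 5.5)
Your proposal is correct and follows the paper's own argument: both divide the two instances of \eqref{eq:tree-killed-asymptotic} so that the common factor $e^{-\lambda t}t^{-3/2}$ and the constant $C_q$ cancel. You also check that $C_q\varphi_0(d(x,z))>0$ and that $\varphi_0(0)=1$ for the special case $z=x$, which the paper leaves implicit.
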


\begin{proof}
This follows immediately from
\eqref{eq:tree-killed-asymptotic}, since the common exponential and
polynomial factors cancel.
\end{proof}

Although there is no proper LCD on $C$, the conditioned law has a
particularly simple description. As observed in the proof of
Theorem~\ref{thm:tree-counterexample}, conditioning on survival exactly
removes the independent killing and leaves the law of the unkilled
random walk. Thus the conditional behaviour is governed by spatial
escape rather than by convergence to a distribution on fixed states.
The absence of an LCD reflects escape of mass, not a lack of structure;
a limit based on the spatial location of the walk is the natural
analogue of the scaled conditional limit in the hub-and-traps model.

\begin{remark}[Consequences for the conjectures]
\label{rem:tree-conjecture-consequences}
The example retains a common fixed-state exponent and an explicit ratio
limit, while its positive-semidefinite edge coefficient kernel has rank
greater than one. Thus irreducibility alone does not imply rank-one
factorisation. Moreover, conditioning on survival recovers the unkilled
random walk, whose mass escapes spatially, and survival is governed by a
strictly smaller exponential decay parameter than the fixed-state
transition probabilities.
\end{remark}

\section{Discussion and open questions}\label{sec:discussion}

The preceding results separate three questions: the existence of a
common fixed-state scale, the rank of its coefficient kernel, and the
relation of that scale to survival. The evidence is strongest for a
common fixed-state exponent. In the spectral representations
\eqref{eq:scalar-full}, \eqref{eq:fibre},
and~\eqref{eq:multichannel-limit}, the leading coefficients are inner
products of edge vectors. Their effective multiplicity is therefore the
dimension of the span of those vectors, with multiplicity~$1$
corresponding to rank one. This suggests replacing the original rank-one
conjecture by the following multiplicity-one and multichannel
formulations, while treating survival as a separate tightness problem.

\begin{conjecture}[Multiplicity-one spectral version]
\label{conj:multiplicity-one}
Let $P_C(t)$ be the transition function of an irreducible reversible
evanescent Markov chain, and let $\lambda$ be the lower endpoint of its
spectral representation. Suppose that, near $\lambda$, the dominant
spectral contribution has the scalar form of
\eqref{eq:scalar-full}, or, more generally, that the asymptotically
contributing edge vectors span a one-dimensional subspace. Suppose
also that every coordinate state couples nontrivially to this
subspace and that the scalar edge measure is regularly varying with
exponent $\kappa>0$. Under suitable continuity and localization
conditions at the edge, there is a slowly varying function $L$ such
that
\[
  e^{\lambda t}p_{ij}(t)
  \sim
  c_{ij}t^{-\kappa}L(t),
  \qquad i,j\in C,
\]
where the coefficients $c_{ij}$ are strictly positive and have rank
one.
\end{conjecture}

As in Theorem~\ref{thm:bd-spectral-edge}, the stronger condition
$\kappa>1$ is needed only if $\lambda$-transience is to follow
automatically from the fixed-state asymptotic.

\begin{conjecture}[General multichannel version]
\label{conj:general-multichannel}
Let $P_C(t)$ be the transition function of an irreducible reversible
evanescent Markov chain, reversible with respect to a strictly positive
measure $m$, and suppose that it has a vector-valued spectral
representation of the form~\eqref{eq:fibre}. Suppose that the
asymptotically contributing spectral components have a common lower
endpoint $\lambda$ and a common regularly varying edge scale
$y^\kappa L(1/y)$, $y\downarrow 0$,
for some $\kappa>0$ and slowly varying function $L$. Suppose further
that the state-dependent spectral data have limiting edge vectors
$V_i$ in a Hilbert space $\mathcal K_{\mathrm{edge}}$ formed from the
contributing components, with the edge constants absorbed into their
normalization. Under suitable continuity and localization conditions
at the edge,
\[
  t^\kappa L(t)^{-1}e^{\lambda t}p_{ij}(t)
  \to
  m_j\langle V_i,V_j\rangle_{\mathcal K_{\mathrm{edge}}},
  \qquad i,j\in C.
\]
Equivalently, the symmetrized limiting coefficient kernel satisfies
\[
  \sqrt{\frac{m_i}{m_j}}\,
  t^\kappa L(t)^{-1}e^{\lambda t}p_{ij}(t)
  \to
  \left\langle
    \sqrt{m_i}\,V_i,
    \sqrt{m_j}\,V_j
  \right\rangle_{\mathcal K_{\mathrm{edge}}}.
\]
The latter kernel is positive semidefinite, and its rank is the
dimension of
\[
  \operatorname{span}
  \{\sqrt{m_i}\,V_i:i\in C\}.
\]
In particular, the limiting coefficient kernel has rank one precisely
when these vectors span a one-dimensional subspace. Strictly positive
fixed-state coefficients require in addition that
\[
  \langle V_i,V_j\rangle_{\mathcal K_{\mathrm{edge}}}>0,
  \qquad i,j\in C.
\]
\end{conjecture}

Proposition~\ref{prop:multichannel-edge} establishes this conclusion
for the finite-channel representation~\eqref{eq:AM-multi} when all
contributing channels have a common regularly varying edge scale.

\medskip\noindent{\bf Directions for further work.}\quad
The results above separate three issues that were intertwined in the
original conjectural picture. They also suggest rather different methods
for addressing these issues.

The common fixed-state exponent appears to be the most robust part of
the picture. It arises in the renewal setting when entrance and exit
segments preserve the tail of a dominant return cycle, and in the
spectral setting when the asymptotically visible components share a
common lower endpoint and edge exponent. A general proof would
therefore require a mechanism that prevents different state pairs from
seeing different dominant excursions or different spectral-edge
scales. On the probabilistic side, one possible approach would be to compare
all fixed-state transition probabilities with a reference diagonal
entry. If $g_{aa}$ is regularly varying, the entrance and exit
decomposition~\eqref{eq:full-renewal} suggests seeking conditions under
which
\[
  0<
  \liminf_{t\to\infty}\frac{g_{ij}(t)}{g_{aa}(t)}
  \leq
  \limsup_{t\to\infty}\frac{g_{ij}(t)}{g_{aa}(t)}
  <\infty.
\]
Such bounds might follow from uniform control of the entrance, exit and
taboo kernels associated with a finite regeneration set. The killed
$M/M/1$ calculation indicates, however, that conditions requiring
exponential integrability beyond the decay parameter would be too
strong: the relevant return kernel may lie exactly at the critical
exponential boundary.

For reversible chains, the analogous problem is to find conditions under
which the dominant spectral edge is effectively one-dimensional. Global
cyclicity is stronger than necessary; it should suffice that the part of
the representation visible at the lower edge have multiplicity~$1$.
Possible hypotheses include positivity improvement after edge rescaling,
uniqueness of the relevant positive $\lambda$-harmonic function, or
compactness in a weighted operator topology. The seed-set construction
of Anderson and McDunnough~\cite{AndersonMcDunnough1990} may provide a
concrete route: one may ask when several seed channels become
asymptotically proportional at the lower edge. The homogeneous-tree
example shows that irreducibility alone cannot enforce this reduction.

The homogeneous-tree example points in the opposite direction. Its
edge coefficient kernel is positive semidefinite but not rank one, and
its survival probability is governed by a strictly smaller exponential
rate. It would be informative to know how widely this phenomenon occurs
on graphs of exponential volume growth, and whether it is tied to
nonamenability, to the multiplicity of boundary directions, or to some
more direct feature of the lower spectral edge. Examples on less
symmetric graphs could help distinguish geometric multiplicity from
the special harmonic analysis available on a homogeneous tree.

There is also scope for constructing examples in the scalar setting.
Starting from a spectral measure with prescribed behaviour
$x^{\kappa-1}L(1/x)$ near the lower endpoint and recovering the
associated three-term recurrence could produce birth--death processes
with a range of specified polynomial exponents. The main additional
step would be to determine when the recurrence coefficients can be
normalized as strictly positive birth and death rates, with the desired
boundary behaviour. Such examples would test how much of
Theorem~\ref{thm:bd-spectral-edge} is reflected directly in the rates
and might provide useful intermediate models between the explicit
$M/M/1$ queue and the abstract spectral-edge theorem.

Survival presents a separate problem. Proposition
\ref{prop:tightness-survival} identifies asymptotic tightness of the
normalized transition measures as the condition under which fixed-state
limits may be summed. It remains to find usable hypotheses implying such
tightness, for example through Lyapunov bounds, excursion estimates, or
spectral estimates uniform in the terminal state. When tightness fails,
the natural question is instead to identify the spatial scale of the
escaping mass, as in the hub-and-traps conditional limit.

\medskip\noindent{\bf Concluding perspective.}\quad
Two complementary pictures emerge from the results of this paper.
The renewal picture says that a long-lived or late-returning path is
often composed of an ordinary entrance segment, one exceptional
excursion, and an ordinary exit segment. When one excursion mechanism
dominates, this explains both a common polynomial factor and rank-one
coefficients. The spectral picture says that polynomial decay is the
Laplace-transform image of power-law spectral mass near the decay
parameter. In a scalar spectral representation, evaluation of the
generalized eigenfunctions at the edge gives rank one automatically.
In a multichannel representation, the symmetrized edge coefficient is
instead a positive semidefinite kernel formed from inner products of
edge vectors, and its rank records the number of asymptotically visible
edge directions.

These pictures are compatible. A single dominant long-excursion
mechanism corresponds naturally to one effective edge channel, whereas
several geometrically distinct long-time mechanisms may appear
spectrally as higher edge multiplicity. This leads to the central
structural question:
\begin{quote}
When does irreducibility reduce the asymptotically dominant long-time 
mechanism to one effective channel?
\end{quote}
The killed $M/M/1$ queue, the hub-and-traps model, and the scalar
birth--death theorem exhibit such a reduction; the homogeneous-tree
example does not. Survival is governed by a different issue. Fixed-state
limits determine survival only when the normalized transition measures
are asymptotically tight. Thus the long-time problem has two distinct
structural dimensions: the multiplicity of the dominant local mechanism
and the spatial tightness of the mass it produces.


\end{document}